\numberwithin{equation}{section}
\theoremstyle{plain}
\newtheorem{theorem}[equation]{Theorem}
\newtheorem{proposition}[equation]{Proposition}
\newtheorem{lemma}[equation]{Lemma} 
\newtheorem{corollary}[equation]{Corollary}
\theoremstyle{definition}
\newtheorem{example}[equation]{Example}
\theoremstyle{remark}
\newtheorem{remark}[equation]{Remark}
\newtheorem{question}[equation]{Question}
\newcommand{\Ad}{\operatorname{Ad}}
\newcommand{\ann}{\operatorname{ann}}
\newcommand{\bos}[1]{{\boldsymbol{#1}}}
\newcommand{\cato}{\operatorname{C}}
\newcommand{\coker}{\operatorname{Coker}}
\newcommand{\cmod}{\Psi}
\newcommand{\depth}{\operatorname{depth}}
\newcommand{\GL}{{\rm GL}}
\newcommand{\End}{\operatorname{End}}
\newcommand{\ext}{\operatorname{Ext}}
\newcommand{\fitt}{\operatorname{Fitt}}
\newcommand{\Frob}{\operatorname{Frob}}
\newcommand{\fm}{{\mathfrak m}}
\newcommand{\fn}{{\mathfrak n}}
\newcommand{\fp}{{\mathfrak p}}
\newcommand{\fq}{{\mathfrak q}}
\newcommand{\fpt}{\widetilde{\mathfrak p}}
\newcommand{\hh}{\operatorname{H}}
\newcommand{\height}{\operatorname{height}}
\newcommand{\Hom}{\operatorname{Hom}}
\newcommand{\sHom}{\underline{\Hom}}
\newcommand{\image}{\operatorname{Image}}
\newcommand{\Ker}{\operatorname{Ker}}
\newcommand{\length}{\operatorname{length}}
\newcommand{\ord}{\operatorname{ord}}
\newcommand{\mco}{\mathcal O}
\newcommand{\pos}[1]{[\![{#1}]\!]}
\newcommand{\Rt}{\widetilde{R}}
\newcommand{\rank}{\operatorname{rank}}
\newcommand{\Spec}{\operatorname{Spec}}
\newcommand{\tors}{\operatorname{tors}}
\newcommand{\tfree}[1]{{#1}^{\operatorname{tf}}}
\newcommand{\trace}{\operatorname{Tr}}
\newcommand{\rhobar}{\overline{\rho}}
\newcommand{\fullT}{\widetilde{\mathbb T}}
\newcommand{\mbb}[1]{\mathbb{#1}}
\newcommand{\TT}{\mathbb{T}}
\newcommand{\ZZ}{\mathbb{Z}}
\newcommand{\QQ}{\mbb{Q}}
\newcommand{\A}{\mbb{A}}
\newcommand{\Q}{\mbb{Q}}
\newcommand{\st}{\operatorname{st}}
\newcommand{\uni}{\operatorname{uni}}
\newcommand{\unr}{\operatorname{unr}}
\newcommand{\phuni}{\operatorname{{\varphi\mbox{-}uni}}}
\newcommand{\phunr}{\operatorname{{\varphi\mbox{-}unr}}}
\begin{document}

\title[The commutative algebra of congruence ideals]{The commutative algebra of congruence ideals\\ and applications to number theory}

\author[S.~B.~Iyengar]{Srikanth B.~Iyengar}
\address{Department of Mathematics,
University of Utah, Salt Lake City, UT 84112, U.S.A.}
\email{srikanth.b.iyengar@utah.edu}

\author[C.~B.~Khare]{Chandrashekhar  B.~Khare}
\address{Department of Mathematics,
University of California, Los Angeles, CA 90095, U.S.A.}
\email{shekhar@math.ucla.edu}

\author[J.~Manning]{Jeffrey Manning}
\address{Mathematics Department,
Imperial College, London, SW7 2RH, UK}
\email{jeffrey.manning@imperial.ac.uk}

\date{\today}

\keywords{Bloch-Kato conjecture, congruence ideal,  congruence module, complete intersection ring, freeness criterion, modularity lifting.}

\subjclass[2020]{11F80, 13C10, 13D02}
   
\begin{abstract} 
In his proof  of Fermat's Last Theorem,  Wiles deployed a commutative algebra technique, namely a numerical criterion for detecting isomorphisms of rings. In our recent work we pick up on Wiles' work and generalize the numerical criterion to ``higher codimension''. A critical ingredient is a notion of congruence module in higher codimension: this has turned out  to be a key definition  whose utility extends beyond the role it plays in the numerical criterion.  In this paper we trace the origin of some of the ideas that led to  our work, both in number theory and commutative algebra, and  new  directions that emerge from it. We introduce a related notion of a congruence ideal.

When applied to deformation theory of Galois representations and Hecke algebras, which is the setting of Wiles's work on Fermat's Last Theorem, our work leads to the notion of congruence ideals for local deformation rings. This sheds light on the classically studied congruence ideals  for global deformation rings and Hecke algebras.  We outline applications of the commutative algebra we have developed to: (i)   integral modularity lifting theorems in the context of weight one forms, and   (ii) factorization formulas for congruence ideals of global deformation rings at augmentations induced by newforms in which  local congruence ideals enter as the local terms.  The latter leads to  surprising relations between  these local congruence ideals and local Tamagawa ideals of Bloch-Kato associated to the rank 3 adjoint motive $\Ad_f$ of $f$.
\end{abstract}

\maketitle

\setcounter{tocdepth}{1}
\tableofcontents

\section{Introduction}
\label{se:intro}
 Our work takes its cue from an ingredient in Wiles's solution of Fermat's Last Theorem,  and is a contribution to the theme of the unreasonable effectiveness of commutative algebra in number theory. This theme goes  back to  at least Kummer's work (again on Fermat's Last Theorem!) that led to the theory of ideals and to the development of the basic infrastructure of algebraic number theory.  The protagonists of this expository article are congruence modules in codimension  $c \geq 0$ defined in \cite{Iyengar/Khare/Manning:2024a}.  Congruence modules  in codimension 0  have  been a subject of intense interest, mainly in the context of congruences attached to modular forms, and  we hope to illustrate that their higher codimension  cognates shed new light on them. 

 The work presented here   grew out of a serendipitous conversation between the first and second authors a few years ago. A starting point  for it is the notion of congruences, and congruence modules, associated to modular forms.  To set the stage  for our work, we recall  Ramanujan's  celebrated congruence; see \cite{Ramanujan:1916}.   Consider the modular forms of weight 12  and level 1 and, for $\mco=\mbb{Z}_{(691)}$, the $\mco$-lattice consisting of those forms whose Fourier expansion has  coefficients in $\mco$, denoted   $M_{12}(\rm SL_2(\mbb{Z}), \mco)$. This lattice is of rank 2 and its $\mco$-submodule spanned by the Ramanujan delta function, $\Delta(z)$, and the Eisenstein series,  $E_{12}$,  is of index $691$. The corresponding congruence module is the cokernel of the  inclusion of lattices $\mco\Delta\oplus \mco E_{12} \hookrightarrow  M_{12}(\rm SL_2(\mbb{Z}), \mco)$, namely,  $\mbb Z /691\mbb Z$; see the discussion below. Ramanujan's congruence  $\tau(n)\equiv \sigma_{11}(n) \mod{691}$---a term-by-term congruence between the Fourier coefficients of  $\Delta$ and $E_{12}$, the ur-congruence in the subject---is equivalent to the non-vanishing of this congruence module. 

We  introduce a general framework in which congruence modules can be defined.

\subsubsection*{Congruence modules and lattices.}
Let $\mco$ be a discrete valuation ring; for example, $\mbb Z_{(p)}$, the localization of $\mbb Z$ at the prime ideal $(p)$ for some prime number $p$, or its completion $\mbb Z_p$, the ring of $p$-adic integers. Let $E$ denote the field of fractions of $\mco$. Let $V$ be a  finite dimensional $E$-vector space and $L\subset V$ a full sub-lattice, namely, a finitely generated $\mco$-submodule of $V$ such that the induced map is an isomorphism $L\otimes_\mco E\cong V$; equivalently, $V/L$ is torsion as an $\mco$-module.

Let  $V_1 \oplus V_2 = V$ be a $E$-vector space decomposition and  $\pi_i\colon V\to V_i$, for $i=1,2$, the corresponding projections. Set  $L_i \colonequals L\cap V_i$ and $L^i\colonequals \pi_i(L)$; both are free $\mco$-modules of rank $\dim(V_i)$.  Any sublattice $L_1 \oplus L_2$ of $L$ of full rank with $L/L_i$ torsion-free arises from such a construction: namely, there is a bijection between tuples $(V, L,V_1 \oplus V_2)$  and $(V,L, L_1 \oplus L_2)$ with $L_1 \oplus L_2$ of finite index in $L$, and $L/L_i$ torsion-free. It is easy to verify that one has natural isomorphisms 
\[
\frac{L^1}{L_1} \cong \frac L{L_1\oplus L_2} \cong \frac{L^2}{L_2}
\]
of $\mco$-modules. This is the \emph{congruence module}  associated to the given data, and it measures the failure of the vector space decomposition $V=V_1\oplus V_2$ to induce an integral decomposition of the lattice $L$. The congruence  module is nonzero if and only if there exist elements $f_1\in L_1$ and $f_2\in L_2$ such that $f_1\equiv f_2\not\equiv 0 \mod \varpi L$, where $\varpi$ is a uniformizer for $\mco$.  In our example above this is the congruence $\Delta \equiv E_{12} \mod{691}$.  

Consider the dual lattice $\Hom_\mco(L,\mco)$  and  the canonical pairing 
\[
\langle \ , \ \rangle_L\colon L \otimes \Hom_{\mco}(L,\mco) \longrightarrow  \mco\,.
\]
The congruence module is  related to the  discriminant of the induced pairing 
\[
\langle \ , \ \rangle_L\colon L_1 \otimes \Hom_{\mco}(L/L_2,\mco) \to \mco\,.
\] 
If  $x_1,\ldots,x_d$ and $f_1,\ldots,f_d$ are an $\mco$-basis for $L_1$ and  $\Hom(L/L_2,\mco)$, respectively, the congruence module is isomorphic to the cokernel of the map $\mco^d\to \mco^d$ given by the matrix $(\langle f_i,x_j\rangle_L)_{ij}$. Thus its  Fitting ideal over $\mco$   is given by $ \left(\det(\langle f_i,x_j\rangle_L)_{ij}\right)$.

\subsubsection*{Congruence modules  and congruences between cusp forms}
Consider 
\[
V\colonequals S_2(\Gamma_1(N),E)\quad\text{and}\quad L=S_2(\Gamma_1(N),\mco)\,,
\]
the space of cusp forms of level $N$ and weight 2 with  Fourier coefficients in $E$, and its lattice of forms  with  Fourier coefficients in $\mco$, respectively. 

Commutative algebra enters the picture through  the action of a Hecke algebra $\TT$ (a finite, flat and reduced $\mco$-algebra)  that acts on these spaces. A (Hecke) newform $f$ for $\mbb T$ gives rise to an augmentation $\lambda_f\colon \TT \to \mco$. With $\fp\colonequals \Ker(\lambda_f)$ and $I$ the annihilator of $\fp$ in $\TT$, one gets a decomposition $V=V_1\oplus V_2$, where $V_1$ is the subspace of $V$ annihilated by $\fp$ and $V_2$ is the subspace annihilated by $I$; see Lemma~\ref{le:cmod-decomposition}.

There is a perfect pairing $\TT \times L \to \mco$ given by $(T,g)=a_1(g|T)$, the coefficient of $q=e^{2\pi iz}$ in the   Fourier expansion  of $g|T\in L$.  This pairing above   implies  that congruence  modules of $\mbb T$ and $L$  associated to $f$  are dual to each other, so one may as well work with the congruence module of $\mbb T$. This perspective has proved to be powerful because the Hecke algebra  $\mbb T$ acts on a plethora of other modules, like the Betti cohomology of the  modular curve $X_1(N) $ with $E$ or $\mco$-coefficients, which are easier to study  using  cohomological techniques. 

The connection of congruence modules to discriminants of pairings  discussed above is a key observation in Hida's result relating congruences  mod $p$ between $f$ and other Hecke eigenforms $g$  to the  $p$-divisibility of  (the algebraic part of)  an  $L$-value $L(1,\Ad_f)$;  see \cite{Hida:1981a}. Hida studies congruences of newforms in  $S_2(\Gamma_1(N),\mco)$ using  the action of the Hecke algebra on  $L=\hh^1(X_1(N),\mco)$ equipped with its Poincar\'e pairing. To show that  the  congruence module attached to  $L$  and $f$ captures  all congruences between $f$ and other forms  of weight 2 and level $N$,  one has to relate  congruence modules arising from $S_2(\Gamma_1(N),\mco)$ and $\hh^1(X_1(N),\mco)$; see  \cite{Hida:1981b} and \cite{Ribet:1983a}. This is an ongoing theme in the study of congruences; cf.~\cite{Bockle/Khare/Manning:2021b} for a recent result in this vein.  The fine integral study of congruences between modular forms is  at the heart of Wiles' proof~\cite{Wiles:1995} of the modularity of semistable  elliptic curves over $\mbb Q$; see Thorne's survey~\cite{Thorne:2024} for an excellent introduction to the role of congruence in the study of Galois representations.

\subsubsection*{Congruence modules in all codimensions}

The decomposition $V_1\oplus V_2$ of the space $V$ of cusp forms above could be into the space of new forms and old forms, or the space spanned by an eigenform  $f$ for $\mbb T$  and its orthogonal complement; congruence modules in these two situations have been studied in \cite{Ribet:1984} and \cite{Hida:1981a} respectively.  The second situation  can be abstracted as follows.

Consider an augmentation $\lambda\colon A\to \mco$ of $\mco$-algebras, where $\mco$ is a discrete valuation ring, and  $A$ is a (commutative) complete local $\mco$-algebra such that the map $\lambda_\fp$ is an isomorphism, where $\fp=\Ker \lambda$.  The \emph{congruence module} of a finitely generated  $A$-module $M$ (with respect to $\lambda$) is the $\mco$-module
\[
\cmod_\lambda(M)\colonequals \frac{M}{M[\fp]+M[I]} \quad \text{where $I\colonequals A[\fp]$.}
\]
Here, for any ideal $J\subset A$, we set
\[
M[J]\colonequals \{x\in M\mid J\cdot x=0\}\,,
\]
the $J$-torsion submodule in $M$.   The hypothesis that $\lambda_\fp$ is an isomorphism implies that the length of the $\mco$-module $\cmod_{\lambda}(M)$ is finite. As is explained in Remark~\ref{re:cmod-lattice}, the definition fits into the paradigm of congruence modules attached to lattice decompositions discussed above, and in particular, $\cmod_\lambda(M)=0$ if and only if the the $A$-submodule $M[\fp]\subseteq M$ has a direct complement.

Motivated by number theory, in \cite{Iyengar/Khare/Manning:2024a} we  generalized the above definition and defined congruence modules in higher codimension. With $A$ as before, we consider augmentations $\lambda\colon A\to \mco$ such that local ring $A_\fp$ is regular (the classical situation recalled above is when we assume  it is a field), for $\fp\colonequals \Ker \lambda$. The  Krull dimension of $A_\fp$, denoted $c$ in the sequel, is known as the codimension (or the height) of $\fp$; the classical situation concerns the case $c=0$. In this generality, the \emph{congruence module} of a finitely generated $A$-module $M$, again denoted $\cmod_\lambda(M)$, is the cokernel of the map
\[
\ext^c_A(\mco,M)\longrightarrow  \tfree{\ext^c_A(\mco,M/\fp M)}
\]
induced by the quotient map $M\to M/\fp M$. Here$\tfree{(-)}$ denotes passage to the torsion-free quotient, as an $\mco$-module; see Section~\ref{se:congruence-ideals} for details and antecedents of this map in local algebra. We develop some of the purely commutative algebraic properties of this invariant in \cite{Iyengar/Khare/Manning:2024a}, and obtain a generalization to higher codimensions of the numerical criterion of Wiles, Lenstra, and Diamond; see \cite{Wiles:1995, Lenstra:1995, Diamond:1997}.

In this manuscript, to further scientific diversity (borrowing a phrase from \cite{Faltings:1998})  and also because it connects better with the pairing discussed earlier, we introduce a notion of a congruence ideal associated to $M$, based on an adjoint of the map used to define congruence modules; namely, as the image of the natural map
\[
\ext^c_A(\mco,M)\otimes_{\mco} \Hom_A(M,\mco)\longrightarrow \tfree{\ext^c_A(\mco,\mco)}\,.
\]
We denote this $\eta_\lambda(M)$; see Section~\ref{se:congruence-ideals} for details of this construction. The congruence ideal records only part of the information encoded in the congruence module---see Lemma~\ref{le:eta-psi}---but suffices for many of the intended applications to number theory. 

Congruence ideals for higher codimension turn out to have uses which we had not anticipated;  in our work we have been led by them to go where they will take us. Unlike the case of $c=0$, when there is a direct relation between congruence ideals and congruences,  in higher codimension the meaning of the congruence ideal is more elusive and  there is no direct link  to congruences; its implicit significance and meaning  is teased out by the contexts in which it is viewed. Our work shows  that it is the correct generalization as it slots in perfectly into our generalization of the numerical criterion.

\subsubsection*{Number theoretic applications.}
 Congruences  in codimension zero have played a crucial role in the relationship between Galois representations and automorphic  forms. They are a key ingredient in the strategy used by Scholze \cite{Scholze:2015a} to attach Galois representations to torsion classes in the cohomology of arithmetic manifolds. In the reverse direction, congruence ideals in the codimension 0  case played a key role in the strategy in \cite{Wiles:1995} and \cite{Taylor/Wiles:1995}  for  attaching modular forms to Galois representations. The applications we outline below hinge  on applying our work on congruence modules in higher codimension  to ``patched'' deformation rings.  Patching was introduced by Taylor and Wiles in \cite{Taylor/Wiles:1995} to complete   the  proof of the modularity of semistable elliptic curves over $\QQ$ initiated in \cite{Wiles:1995}. The proof  in \cite{Wiles:1995} and \cite{Taylor/Wiles:1995} relied  on using patching and the numerical criterion (in codimension $c=0$) independently, with the  latter  applied to classical (unpatched) objects. 
 
 A key insight of our work is that applying a numerical criterion (or analyzing congruence ideals), necessarily in codimension $c>0$, \emph{after} patching (of patched deformation rings)  increases its reach and leads to  novel factorization formulas for congruence ideals of global deformation rings and Hecke algebras in codimension 0. We give two applications of our commutative algebra results to number theory; both of them   rely on this insight.
 
\subsubsection*{Jacquet-Langlands}
 We prove  a  Jacquet-Langlands  correspondence for Hecke algebras $\mbb T$  that  act faithfully on the coherent cohomology of Shimura curves $X$, arising from indefinite quaternion algebras  $D$ over $\QQ$,  with coefficients in $\omega_X^{\otimes k}$ for weight $k=1$; see Theorem \ref{th:weightone}.  The novelty here is that, as  weight  $k=1$ Hecke algebras tend to have a lot of torsion  (in contrast to the Hecke algebra acting  on  the coherent cohomology of the sheaves $\omega^{\otimes k}$ for  weights $k\geq 2$),  our result  cannot be deduced from the classical Jacquet-Langlands correspondence for these Hecke algebras after tensoring with $\mbb Q$.  To accomplish this, we    use modularity lifting (reciprocity) to establish  functorialty (lifting torsion in  cohomology of Shimura curves to torsion in  cohomology of modular curves),  as the latter   is obvious at the level of Galois parameters (deformation rings).   We sketch a proof of the main theorem of  our forthcoming work  with  Diamond \cite{Diamond/Iyengar/Khare/Manning:2026a}, that generalizes the work of \cite{Iyengar/Khare/Manning:2024a} and proves the required modularity lifting results in the weight $1$ case. These results  apply  to show  that mod $p^n$ Galois representations $\rho\colon G_\Q \to  \GL_2(\ZZ/p^n\ZZ)$  that are unramified at $p$ and  need   not lift to characteristic 0 arise from mod $p^n$ weight one forms.

 \subsubsection*{Factorization of congruence ideals} Applied to local deformation rings, and combined with patching,  our definition of congruence ideals (in codimension $c\ge 1$) produces a  local-global  factorization of congruence ideals (for $c=0$)  of classical Hecke algebras.    We use this to refine the results about the Wiles defect of Hecke algebras proved in \cite{Bockle/Khare/Manning:2024}. Rather than simply computing the Wiles defect which measures the difference between lengths of cotangent spaces and congruence modules at augmentations of Hecke algebras induced by newforms as in \emph{op. cit.}, here  we give formulas for each of these lengths. A key ingredient needed  for this approach is computation of  congruence ideals of augmentations  arising from newforms of  local deformation rings  considered in \cite{Bockle/Khare/Manning:2024}; see Theorem \ref{th:BK}. Patching  is integral to this application as well: it allows one to prove  integral $R=\TT$ theorems if the local deformation rings are Cohen-Macaulay. This allows one to apply Theorem \ref{th:deformation} to deduce our factorization of congruence ideals of $R$ and $\TT$.

  This also leads to computations of congruence ideals (at augmentations induced by newforms) of integral Betti cohomology groups of modular and Shimura curves. This ``numerical method'' was used in \cite{Bockle/Khare/Manning:2024} to deduce   defects of modules  from defects   of rings; here we apply it  to determine congruence ideals of modules from  knowing congruence ideals  of rings.  One of the facts that this leans on is  that  for a  module $M$ over $A$, and an augmentation $\lambda\colon A \to \mco$  supported by $M$, one has $\eta_\lambda(A) \subseteq \eta_\lambda(M)$.  This sheds new light on results of  Ribet and Takahashi  \cite{Ribet/Takahashi:1997} on degrees of (optimal)  parametrizations of   elliptic curves over $\QQ$ by Shimura curves.

\subsubsection*{Relation to our earlier work and  ongoing work.}

The work surveyed here has been years in the making. The initial impetus was  a hunch (see \cite{Fakhruddin/Khare/Ramakrishna:2021})  that, if developed further, the numerical criterion might have the potential  to give information about deformation rings and Hecke algebras that was not accessible by known methods.  Of the two techniques Wiles introduced in his work on Fermat's Last Theorem, patching has been developed into a big thoroughfare  that runs through the subject of linking Galois representations and  automorphic forms. The technique relying on the  numerical criterion had not been developed much.   One  open problem  we had in mind was to prove  smoothness of the ring parametrizing  ordinary, fixed determinant deformations of   Galois representations  $\rho_\pi\colon G_K \to \GL_2(\overline{\Q}_p)$ that arise  from cohomological, cuspidal,  automorphic representations $\pi$ of $\GL_2(\A_K)$,  for imaginary quadratic fields $K$.  To follow the method of \cite[Chapter 3]{Wiles:1995} to approach  this (equivalently, to prove the the vanishing of a certain ``dual Selmer group'') it was natural to seek a definition for  congruence modules at  points of higher codimension. This led to the work of \cite{Iyengar/Khare/Manning:2024a} (see also \cite{Brochard/Iyengar/Khare:2023b}, \cite{Iyengar/Khare/Manning:2024b}) where we introduce congruence modules for such points \emph{assuming} they are smooth.  Thus while we  still cannot solve (alas!)  what was the target problem,  it came as a  pleasant surprise to us that if one used  this definition at  the plentifully available smooth points of ``patched'' deformation rings one was able to deduce  new integral modularity lifting results  (see Theorem \ref{th:weightone}) by following the original strategy of Wiles to deduce non-minimal modularity lifting results from the  minimal case.

In a parallel development,  in \cite{Bockle/Khare/Manning:2021b, Bockle/Khare/Manning:2024} the failure of Hecke rings to be complete intersections was studied via analyzing the Wiles defect. When \cite{Bockle/Khare/Manning:2021b}  was written, its authors computed the Wiles defect of global deformation rings and Hecke algebras by whatever means lay at hand,  and  only deduced  \emph{a posteriori}  that the  answer was a sum of local terms. This led  in \cite{Bockle/Khare/Manning:2024} to defining the  Wiles defect of local deformation rings,  which facilitated an \emph{a priori}  proof  that the defect of a global deformation ring is a sum of defects of the corresponding local deformation rings. 

In this paper  we take advantage of the commutative algebra developed in \cite{Iyengar/Khare/Manning:2024a, Iyengar/Khare/Manning/Urban:2024} to give a new approach to, and refinements of,  the  results of \cite{Bockle/Khare/Manning:2021b, Bockle/Khare/Manning:2024}. These results are in the tame case:  the local deformation rings at $p$ are nice (even smooth), while  away from $p$  they may  be  complicated (and not complete intersections).

Congruence ideals  of local deformation rings  are local analogs of   congruence ideals of  global  deformation rings and Hecke algebras.  The congruence ideals  for augmentations induced by $f \in S_k(\Gamma_1(N))$ of   local deformation rings at $p$ are vital   to our forthcoming work (joint with Diamond) on the  $p$-part of the  Bloch and Kato conjecture of \cite{Bloch/Kato:1990}  for the  value at $s=1$ of degree 3 adjoint $L$-function $L(\Ad_f,s)$ for a newform $f \in S_k(\Gamma_1(N))$ with $k \geq 2$.   The authors of   \cite{Diamond/Flach/Guo:2004} proved  the conjecture for primes $p$ with  $(p,Nk!)=1$, under the  Taylor-Wiles hypothesis that the  mod $p$ representations $\rhobar_f|_{\Q(\mu_p)}$ are irreducible. Our forthcoming work (joint with F.~Diamond)  in \cite{Diamond/Iyengar/Khare/Manning:2026b},  which proves cases of the conjecture not covered in \cite{Diamond/Flach/Guo:2004}; in particuar, we allow $k \geq p$ or $N$ to be divisible by powers of $p$. Our work
hinges on a   surprising connection between  local congruence ideals at $q$ and the local  Tamagawa ideals at $q$ of Bloch and Kato for   the adjoint motive $\Ad_f$ arising from $f$; this may be viewed as a local analog of the main theorem of \cite{Iyengar/Khare/Manning/Urban:2024}. Corollary \ref{cor:tamagawa} is a hint of that, in the easier case of $q \neq p$.

\subsubsection*{Leitfaden} The  main rationale of this paper is to provide a bird's eye view of our work in \cite{Iyengar/Khare/Manning:2024a}, and apply its methods to reprove and refine the results of \cite{Bockle/Khare/Manning:2021b} and \cite{Bockle/Khare/Manning:2024}.  This paper is kindred  to the article~\cite{Iyengar:2026} in the Proceedings of the International Congress of Mathematicians 2026 written by the first author, which surveys   the commutative algebra developed in \cite{Iyengar/Khare/Manning:2024a} (and also in \cite{Brochard/Iyengar/Khare:2023a}, \cite{Brochard/Iyengar/Khare:2023b}, \cite{Brochard/Iyengar/Khare:2025}).  Section \ref{se:congruence-ideals} below  views  the commutative algebra results of \emph{op. cit.} from a different slant and  includes  variations, and embellishments,  of  themes that were  initiated in our earlier paper: in particular we define the notion of a congruence ideal and state most of our results using congruence ideals rather than congruence modules.  In Section \ref{se:tool-kit}, \ref{se:determinantal-rings} and \ref{se:BKM-revisited}  we refine results about Wiles defects of  local deformation rings in \cite{Bockle/Khare/Manning:2021b} and \cite{Bockle/Khare/Manning:2024} by computing their congruence ideals. In Section \ref{se:NT}  we use these computations of local congruence ideals  to give formulas (see Theorem \ref{th:BK}) for congruence ideals of Hecke algebras acting on cohomology of Shimura and modular curves studied in \emph{op. cit.}  As another application of our methods,  we  sketch the application of Theorem \ref{th:defect0} to proving integral modularity lifting results in the weight one setting.

The mathematics presented here  illustrates an often observed  feature of number theory  that it can have the flavor of applied mathematics: one applies to its problems techniques from other branches of mathematics, in our case commutative algebra, that are sometimes expressly  developed in response to its demands.

\section{Congruence ideals}
\label{se:congruence-ideals}
In this section we introduce the class of augmented $\mco$-algebras and a notion of congruence ideal attached to a module over such an algebra that are the focus of this manuscript.  For the most part, the proofs of the statements are only sketched. The emphasis is on the purely commutative algebra aspects of the theory, but the development is inspired by, and tailored for applications to, number theory. We comment on these motivations, and also on connections to literature in commutative algebra, in the subsections titled ``Notes". See also the first part of Section~\ref{se:NT}.

Throughout this text $\mco$ denotes a complete discrete valuation ring and $E$ its field of fractions. We fix also a  uniformizer $\varpi$ for $\mco$.  For any $\mco$-module $U$,  set
\[
\tors U \colonequals \Ker(U\to U\otimes_\mco E)\quad\text{and} \quad \tfree U\colonequals \coker (\tors U\subseteq U)\,;
\]
these are the torsion submodule, and the torsion-free quotient, respectively,  of $U$. By construction, there is an exact sequence $0\to \tors U\to U \to \tfree U\to 0$; when $U$ is finitely generated, this is split, though not canonically. We also have to consider the $\mco$-modules:
\[
U^*\colonequals \Hom_{\mco}(U,\mco)\quad\text{and}\quad U^\vee \colonequals \Hom_{\mco}(U,E/\mco)\,.
\]
The one on the left is the usual $\mco$-linear dual whereas the one on the right is the Matlis dual, for $E/\mco$ is the injective hull of the residue field $k$ of $\mco$. The Matlis dual of the exact sequence defining $\tfree{U}$ yields the exact sequence
\[
0\longrightarrow (U^{\vee})_{\rm div} \longrightarrow U^{\vee} \longrightarrow \mathrm{cotors}(U^{\vee})\longrightarrow 0
\]
relating the divisible submodule of $U^\vee$ and its cotorsion quotient submodule.

\subsection*{Augmented $\mco$-algebras.} 
Let $A$ be a noetherian local $\mco$-algebra with residue field $k_A=\mco/(\varpi)$, the residue field of $\mco$. We assume that $A$ is complete with respect to its maximal ideal, $\fm_A$. Consider an augmentations $\lambda\colon A\to \mco$ of $\mco$-algebras and set
\[
\fp\colonequals \Ker \lambda\quad\text{and}\quad c\colonequals \dim A_\fp\,.
\]
The number $c$ is the height, also called the codimension, of the ideal $\fp$.  The $\mco$-module $\fp/\fp^2$ is the \emph{conormal module} of $\lambda$; in number theory literature, it is often referred to as the cotangent module, and we follow suit. Its rank as an $\mco$-module is the embedding dimension of the local ring $A_\fp$, since one has isomorphisms of $E$-vector spaces
\[
(\fp/\fp^2)\otimes_{\mco} E \cong \fm/\fm^2\,,
\]
where $\fm$ is the maximal ideal $\fp A_{\fp}$ of $A_{\fp}$. Adjunction yields isomorphisms
\begin{equation}
    \label{eq:rank-cotangent}
    \Hom_{\mco}(\fp/\fp^2,E) \cong \Hom_E(\fp/\fp^2\otimes_{\mco}E, E)\cong \Hom_E(\fm/\fm^2,E)\,.
\end{equation}
 of $E$-vector spaces. This observation is used later on.

In what follows, we consider $\mco$ as an $A$-module via $\lambda$. 
For each $A$-module $M$ and integer $i$ the modules $\ext^i_A(\mco,M)$ have a natural $\mco$-action. In particular, the $\mco$-modules $\{\ext^i_A(\mco,\mco)\}_{i\in\mbb{N}}$ play a key role in what follows. Applying $\Hom_A(-,\mco)$ to the exact sequence $0\to \fp\to A\to \mco \to 0$ of $A$-modules yields an isomorphism
\begin{equation}
    \label{eq:normal}
    \ext^1_A(\mco,\mco) \cong \Hom_\mco(\fp/\fp^2,\mco)\,.
\end{equation}
 The module on the right is the normal module of $\lambda$. In contexts that interest us, it determines the torsion-free part of the higher Ext-modules; see Theorem~\ref{th:serre}.
 
 For each $A$-module $M$ there is a natural map
\begin{equation}
\label{eq:pre-eta}
\ext^c_A(\mco,M)\otimes_{\mco} \Hom_A(M,\mco)\longrightarrow \ext^c_A(\mco,\mco)\,.    
\end{equation}
This map has a particularly simple description when $c=0$, for then
\[
\ext^0_A(\mco,M)=\Hom_A(\mco,M) = M[\fp]\,,
\]
namely, the $\fp$-torsion submodule of $M$, and \eqref{eq:pre-eta} is the map 
\[
\Hom_A(\mco,M)\otimes_\mco \Hom_A(M,\mco)\longrightarrow \mco
\]
given by composition. By adjunction $\Hom_A(M,\mco)\cong \Hom_{\mco}(M/\fp M,\mco)$ and the map  coincides with the composite map
\[
M[\fp]\otimes_\mco \Hom_{\mco}(M/\fp M,\mco) \longrightarrow 
            (M/\fp M)\otimes_\mco \Hom_\mco(M/\fp M,\mco)\longrightarrow \mco
\]
where the one of the left is induced by the  composition $M[\fp]\subseteq M\twoheadrightarrow M/\fp M$; the one on the right is evaluation.

For $c\ge 1$, in the Yoneda interpretation of Ext groups an element $\zeta\in \ext^c_A(\mco,M)$ is represented by an exact sequence of $A$-modules
\[
0\longrightarrow M\longrightarrow X_{c-1}\longrightarrow \cdots \longrightarrow X_0\longrightarrow \mco \longrightarrow 0 
\]
Given $f\in \Hom_A(M,\mco)$, the image of $\zeta\otimes f$ under the map \eqref{eq:pre-eta} is the push-out of $\zeta$ along $f$. 
See MacLane~\cite[Chapter 3]{MacLane:1963a} for details. Here is another perspective: $\zeta$ can be identified with a morphism $\mco\to \Sigma^c M$ in the derived category of $A$, and the map above is the obvious composition 
\[
\mco \xrightarrow{\ \zeta\ } \Sigma^c M\xrightarrow{\ \Sigma^c f\ } \Sigma^c\mco\,.
\]
Composing \eqref{eq:pre-eta} with the natural quotient map $\ext^c_A(\mco,\mco)\to \tfree{\ext^c_A(\mco,\mco)}$ gives the $\mco$-linear map
\begin{equation}
\label{eq:eta-defn}
\ext^c_A(\mco,M)\otimes_{\mco} \Hom_A(M,\mco)\longrightarrow \tfree{\ext^c_A(\mco,\mco)}\,.
\end{equation}
We write $\eta_\lambda(M)$ for the image of this map, viewed as a submodule of  $\tfree{\ext^c_A(\mco,\mco)}$, and call it the \emph{congruence ideal} of $M$; the term ``ideal" is a misnomer, though its use is partly justified by the fact that
$\tfree{\ext^c_A(\mco,\mco)}$ is a free $\mco$-module of rank one---see Lemma~\ref{le:top-class}---and for any choice of $\mco$-linear isomorphism $\tfree{\ext^c_A(\mco,\mco)}\cong \mco$ the image of $\eta_\lambda(M)$ in $\mco$ is an ideal that is independent of the choice of the isomorphism.

It is straightforward to check that $\eta_\lambda(A)$ is the image of the composition
\begin{equation}
\label{eq:eta-ring}
\ext^c_A(\mco,A)\longrightarrow \ext^c_A(\mco,\mco)\longrightarrow \tfree{\ext^c_A(\mco,\mco)}
\end{equation}
where the one on the left is induced by the natural surjection $A\to \mco$. The result below is the starting point of the algebraic theory of congruence modules and congruence ideals.

\begin{theorem}
\label{th:regular-eta}
Let $A$ be a noetherian local $\mco$-algebra and $\lambda\colon A\to \mco$ a morphism of $\mco$-algebras. The following conditions are equivalent.
\begin{enumerate}[\quad\rm(1)]
    \item $A$ is regular at $\lambda$;
    \item $\eta_\lambda(A)\ne 0$;
    \item $\eta_\lambda(M)\ne 0$ all for finitely generated $A$-modules $M$ with $M_\fp\ne 0$.
\end{enumerate}
Moreover, when these conditions hold, one has $\eta_\lambda(A)=\tfree{\ext^c_\lambda(\mco,\mco)}$ if and only if $A$ is regular.
\end{theorem}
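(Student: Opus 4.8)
The plan is to reduce the whole statement, via localization at $\fp$, to questions about the local ring $R:=A_\fp$, and ultimately to a fact in local algebra. First, $(3)\Rightarrow(2)$ is the case $M=A$ (note $A_\fp\ne 0$). For the remaining implications, observe that $\mco_\fp=K$ and that $\ext$ commutes with localization of finitely generated modules over a noetherian ring; moreover each $\ext^i_A(\mco,-)$ is annihilated by $\fp=\Ker\lambda$ (the action of $a\in\fp$ through the source $\mco$ is multiplication by $\lambda(a)=0$), so localizing such a module at $\fp$ is the same as applying $-\otimes_\mco K$. Consequently $\ext^c_A(\mco,-)_\fp=\ext^c_R(K,(-)_\fp)$, and the maps \eqref{eq:eta-defn} and \eqref{eq:eta-ring} localize at $\fp$ to the analogous maps over $R$ (whose residue field is $K$ and with $\dim R=c$). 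Since $\eta_\lambda(M)$ lies inside the torsion-free $\mco$-module $\tfree{\ext^c_A(\mco,\mco)}$, it is nonzero if and only if it survives $-\otimes_\mco K$, i.e.\ if and only if the localized map is nonzero.

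For $(1)\Rightarrow(3)$ I would use that, when $R$ is regular, $\fp R$ is generated by a regular sequence $x_1,\dots,x_c$, so the Koszul complex on the $x_i$ is a minimal free resolution of $K$ over $R$; dualizing it into a finitely generated $R$-module $N$ yields a natural isomorphism $\ext^c_R(K,N)\cong N\otimes_R K$ (the dual complex ends in $N$, and the last incoming differential has image $\fp N$). Using also $\Hom_R(M_\fp,K)\cong\Hom_K(M_\fp\otimes_R K,K)$, the localization of \eqref{eq:eta-defn} becomes the evaluation pairing
\[
(M_\fp\otimes_R K)\otimes_K\Hom_K(M_\fp\otimes_R K,K)\longrightarrow K,
\]
which is surjective as soon as $M_\fp\otimes_R K\ne 0$; and this holds by Nakayama because $M_\fp\ne 0$. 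Hence $\eta_\lambda(M)\ne 0$; in particular $(1)\Rightarrow(2)$.

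The main point is $(2)\Rightarrow(1)$, which I would prove contrapositively: if $R=A_\fp$ is not regular, then the map $\ext^c_R(K,R)\to\ext^c_R(K,K)$ induced by $R\to K$ is zero, hence $\eta_\lambda(A)=0$. Fixing a minimal free resolution $F_\bullet\to K$ over $R$, all differentials in $\Hom_R(F_\bullet,K)$ vanish, so $\ext^c_R(K,K)=\Hom_R(F_c,K)$, a class in $\ext^c_R(K,R)$ is represented by a cocycle $\phi\in\Hom_R(F_c,R)$, and it maps to $\phi\bmod\fp R$. Thus the map is nonzero exactly when some cocycle $\phi$ is surjective, i.e.\ there is a split surjection $\phi\colon F_c\twoheadrightarrow R$ with $\phi\circ\partial_{c+1}=0$; writing $F_c=R\oplus\Ker\phi$ one sees this is equivalent to the $c$-th syzygy of $K$ having a nonzero free direct summand. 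So $(2)\Rightarrow(1)$ comes down to the purely local‑algebraic assertion that over a non-regular local ring no positive syzygy of the residue field has a nonzero free direct summand. \emph{I expect this to be the main obstacle.} It is elementary when $c=1$: the first syzygy of $K$ is $\fp R$, and if a free direct summand $Rv$ with $v$ a non-zero-divisor existed, the resulting $R$-linear $\sigma\colon\fp R\to R$ with $\sigma(v)=1$ would satisfy $u=\sigma(uv)=v\sigma(u)$ for every $u\in\fp R$, forcing $\fp R=Rv$, so $R$ is a discrete valuation ring. For $c\ge 2$ I would reduce to this case by an induction over syzygies, or invoke the known rigidity of the residue field (its positive syzygies are stable, i.e.\ have no free summands).

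Finally, for the ``moreover'', unwinding the definition of the congruence module at $M=A$ (where $M/\fp M=\mco$) identifies its defining map with \eqref{eq:eta-ring}, so $\cmod_\lambda(A)=\tfree{\ext^c_A(\mco,\mco)}\big/\eta_\lambda(A)$; the assertion is thus that $\cmod_\lambda(A)=0$ if and only if $A$ is regular. If $A$ is regular then, $A/\fp=\mco$ being regular, $\fp$ is generated by part of a regular system of parameters $x_1,\dots,x_c$ of $A$; the Koszul complex on the $x_i$ resolves $\mco$ over $A$, and the computations above now give $\ext^c_A(\mco,\mco)\cong\mco$ (already torsion-free) and $\ext^c_A(\mco,A)\cong A/\fp=\mco$ mapping onto it by the identity, so $\eta_\lambda(A)=\tfree{\ext^c_A(\mco,\mco)}$. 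Conversely, if $\cmod_\lambda(A)=0$ then $\eta_\lambda(A)=\tfree{\ext^c_A(\mco,\mco)}\ne 0$, so by the equivalence already established $A_\fp$ is regular of dimension $c$; since $\mco$ is a discrete valuation ring and $\ext^c_A(\mco,\mco)\otimes_\mco K=\ext^c_R(K,K)\cong K$, the module $\tfree{\ext^c_A(\mco,\mco)}$ is free of rank one over $\mco$ and $\ext^c_A(\mco,A)$ surjects onto it. The delicate last step is to promote ``$A$ regular at $\lambda$'' to ``$A$ regular'': writing $A=Q/I$ with $Q$ regular local and $I\subseteq\fm_Q^2$, I would derive $I=0$ by feeding this surjectivity — equivalently, the vanishing of $\cmod_\lambda(A)$ together with regularity at $\lambda$ — into the generalized numerical criterion of \cite{Iyengar/Khare/Manning:2024a}.
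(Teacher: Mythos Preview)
Your localization-at-$\fp$ reduction and your identification of the crux of $(2)\Rightarrow(1)$---that over a non-regular local ring no positive syzygy of the residue field has a free summand---is exactly the paper's route; the paper simply names this as Lescot's theorem \cite[Theorem~1.4]{Lescot:1983}, so rather than attempting an induction you should cite it (or the closely related result of Martsinkovsky~\cite{Martsinkovsky:1996} that the paper also mentions). Your Koszul computation for $(1)\Rightarrow(3)$ is likewise the ``direct computation'' the paper alludes to, and your forward direction of the ``moreover'' clause is fine.

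The genuine gap is in the converse of the ``moreover'' clause. Invoking the numerical criterion of \cite{Iyengar/Khare/Manning:2024a} (Theorem~\ref{th:defect0} in the present paper) to pass from $\cmod_\lambda(A)=0$ to regularity of $A$ does not work as stated. First, that criterion carries the hypothesis $\depth_A M\ge c+1$, which for $M=A$ reads $\depth A\ge c+1$ and is \emph{not} assumed here; the paper even gives an explicit example (following Theorem~\ref{th:defect0}) showing the criterion fails without it. Second, even granting the hypothesis, the criterion only yields that $A$ is a complete intersection, not that $I=0$ in a minimal presentation $A=Q/I$; you would still need a separate argument that $\Phi_\lambda(A)=0$ together with the complete intersection property forces the relations to vanish, and you have not supplied one. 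The paper takes a different route: it extracts this converse from \cite[Theorem~2.6]{Iyengar/Khare/Manning/Urban:2024}, whose proof again rests on Lescot's result, this time applied to $A$ itself (with residue field $k_A$) rather than to the localization $A_\fp$.
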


\begin{proof}[Sketch of proof]
Since $\eta_\lambda(A)$ is the image of the map $\tfree{\ext^c_A(\mco,A)}\to\tfree{\ext^c_A(\mco,\mco)}$ of torsion-free $\mco$-modules, it is nonzero if and only if the induced map obtained by localizing at $\fp$, is nonzero; that is to say, the map
\[
\ext^c_{A_\fp}(E,A_\fp)\longrightarrow \ext^c_{A_\fp}(E,E)
\]
induced by the surjection $A_\fp\to E$, is nonzero.  This condition is equivalent to regularity of the local ring $A_\fp$, by a result of Lescot~\cite[Theorem~1.4]{Lescot:1983}; see also \cite[Theorem~2.4]{Avramov/Iyengar:2013}. This justifies (1)$\Leftrightarrow$(2). The claim that (1)$\Rightarrow$(3) can be verified by a direct computation; see the proof of Lemma~\ref{le:ext-tfree} below.

The last part of the theorem is extracted from \cite[Theorem~2.6]{Iyengar/Khare/Manning/Urban:2024}; its proof again uses Lescot's result.
\end{proof}

The result below is easy to verify, given the functoriality of the map \eqref{eq:eta-defn}.

\begin{lemma}
\label{le:add-eta}
One has $\eta_\lambda(M\oplus N)=\eta_\lambda(M)+\eta_\lambda(N)$ for finitely generated $A$-modules  $M,N$. \qed
    \end{lemma}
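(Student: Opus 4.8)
The plan is to derive the identity directly from the additivity of the two functors $\ext^c_A(\mco,-)$ and $\Hom_A(-,\mco)$ that appear in the source of \eqref{eq:eta-defn}, combined with the naturality of that map in $M$. Write $\iota_M\colon M\to M\oplus N$, $\pi_M\colon M\oplus N\to M$, and likewise $\iota_N,\pi_N$, for the canonical structure maps of the biproduct, so that $\pi_M\iota_M=\idmap_M$, $\pi_N\iota_N=\idmap_N$, and $\pi_N\iota_M=0=\pi_M\iota_N$. The first step is to record the resulting isomorphisms
\[
\ext^c_A(\mco,M\oplus N)\cong\ext^c_A(\mco,M)\oplus\ext^c_A(\mco,N)
\quad\text{and}\quad
\Hom_A(M\oplus N,\mco)\cong\Hom_A(M,\mco)\oplus\Hom_A(N,\mco)\,,
\]
where in the second one a functional $f\in\Hom_A(M,\mco)$ corresponds to $f\pi_M$. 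Tensoring over $\mco$ and using that $-\otimes_\mco-$ distributes over the (finite) direct sums, the source of \eqref{eq:eta-defn} for $M\oplus N$ splits as a direct sum of four summands $\ext^c_A(\mco,P)\otimes_\mco\Hom_A(Q,\mco)$ with $P,Q\in\{M,N\}$.

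Next I would compute how \eqref{eq:eta-defn} restricts to each of these four summands, using the derived-category description of the map: the image of $\zeta\otimes g$ is the class in $\tfree{\ext^c_A(\mco,\mco)}$ of the composite $\mco\xrightarrow{\ \zeta\ }\Sigma^c(M\oplus N)\xrightarrow{\ \Sigma^c g\ }\Sigma^c\mco$. On the summand $P=Q=M$ one has $\zeta=\Sigma^c\iota_M\circ\zeta'$ for a unique $\zeta'\in\ext^c_A(\mco,M)$ and $g=f\pi_M$ for a unique $f\in\Hom_A(M,\mco)$, and then the composite equals $\Sigma^c f\circ\zeta'$ since $\pi_M\iota_M=\idmap_M$; hence on this summand \eqref{eq:eta-defn} agrees with the defining map of $\eta_\lambda(M)$, and its image is $\eta_\lambda(M)$. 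By symmetry the summand $P=Q=N$ contributes $\eta_\lambda(N)$. On each of the two cross summands, where $P\ne Q$, the composite factors through $\pi_Q\iota_P=0$, so it vanishes. Since the source is the direct sum of these four pieces, its total image---which is $\eta_\lambda(M\oplus N)$---is the sum $\eta_\lambda(M)+\eta_\lambda(N)+0+0$ of the images of the pieces, all taken inside the common target $\tfree{\ext^c_A(\mco,\mco)}$.

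I do not expect a genuine obstacle: the content is bookkeeping with biproducts. The one place warranting a moment's care is that the tensor product in \eqref{eq:eta-defn} is over $\mco$ rather than over $A$, so one should confirm that the four-fold decomposition of the source is valid---it is, precisely because the direct sums are finite and $-\otimes_\mco-$ preserves finite direct sums---and that passing to the torsion-free quotient, being additive and functorial, is compatible with forming sums of images. For $c=0$ the same argument goes through verbatim, with $\Sigma^0$ read as the identity, $\ext^0_A(\mco,M)=M[\fp]$, and \eqref{eq:eta-defn} interpreted as the composition pairing described just after \eqref{eq:pre-eta}.
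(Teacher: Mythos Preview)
Your proposal is correct and is precisely the detailed verification the paper alludes to when it says the result ``is easy to verify, given the functoriality of the map \eqref{eq:eta-defn}'' and then omits the proof. You have simply unpacked that functoriality via the biproduct maps, which is exactly what is intended.
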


\subsubsection*{Notes}
The invariant $\eta_\lambda(A)$ appears in many places in the literature in number theory, albeit only in the case when $c=0$; see ~\cite{Mazur:1977,Hida:1981b,Ribet:1983a, Ribet:1984,Wiles:1995}. In number theoretic context, where we consider augmentations of local or global deformation rings arising from automorphic forms, the condition on $\lambda$, that $A_\fp$ is regular, is expected and often provable. For example, in the local case it corresponds to genericity conditions on local components of automorphic forms. 

Analogues of the map \eqref{eq:eta-ring} have appeared also in commutative algebra. In that context, the relevant map is $\ext^*_R(k,R)\to \ext^*_R(k,k)$ that is induced by the natural map $R\to k$ from a noetherian local ring $R$ to its residue field $k$. See the work of Lescot~\cite{Lescot:1983}, invoked in the proof of Theorem~\ref{th:regular-eta}, and also that of Martsinkovsky~\cite{Martsinkovsky:1996}, and Avramov and Veliche~\cite{Avramov/Veliche:2007}. Keeping in the spirit of \cite{Avramov/Halperin:1983}, there are closely related to developments in rational homotopy theory, dealing with the evaluation map; see, for instance, Felix and Lupton~\cite{Felix/Lupton:2007}.

\subsection*{The category $\cato_\mco(c)$.}
For applications to number theory, the focus is on pairs $(A,\lambda)$ that satisfy the equivalent conditions in Theorem~\ref{th:regular-eta}; that is to say, where the local ring $A_{\fp}$ is regular. We write $\cato_\mco$ for the category of such pairs; a morphism $ \alpha\colon (A,\lambda_A) \to (B,\lambda_B)$ in this category is a map $\alpha\colon A\to B$ of $\mco$-algebras such that 
$\lambda_B\circ\alpha = \lambda_A$. For any integer $c\ge 0$, the subcategory of pairs $(A,\lambda)$ in $\cato_\mco$ with $\dim A_\fp=c$ is denoted $\cato_\mco(c)$.

\begin{example}
    Here are some typical examples, from \cite{Darmon/Diamond/Taylor:1997}, to keep in mind: 
\begin{align*}
&A \colonequals \{(a,b)\in \mco\times \mco \mid a\equiv b \mod \varpi^n\} \quad\text{where $n$ is a fixed positive integer.}  \\
&B \colonequals \{(a,b,c)\in \mco\times \mco\times \mco \mid a\equiv b\equiv c\mod \varpi\} 
\end{align*}
where $\varpi$ is a uniformizer for $\mco$. One has isomorphisms of $\mco$-algebras
\[
A\cong \frac{\mco\pos{x}}{(x(x-\varpi^n))} \quad\text{and}\quad B\cong \frac{\mco\pos{x,y}}{(x(x-\varpi),y(y-\varpi),xy)}\,.
\]
Clearly, there are multiple $\mco$-valued points in $\Spec A$ and  $\Spec B$, and sometimes this freedom in choosing the point is important; see \cite{Iyengar/Khare/Manning:2024b}. For instance, for $A$ one has augmentations $\lambda_0$ and $\lambda_n$ that assign $x$ to $0$ and $\varpi^n$, respectively. A direct calculation yields that $\eta_{\lambda_0}(A)=(\varpi^n)=\eta_{\lambda_n}(A)$.

See Sections~\ref{se:determinantal-rings} and \ref{se:BKM-revisited} for many other, more complicated, examples. These  occur in nature as local deformation rings, and may be regarded as stock in trade  for us, just as, say, certain topological spaces might be part of a topologist's tool kit.
\end{example}

Here is a simple consequence of the regularity of the ring $A_\fp$. In the statement, $\beta_{A_\fp}(M_\fp)$ denotes the minimal number of generators of the $A_\fp$-module $M_\fp$.  For a more refined description of $\tfree{\ext^c_A(\mco,\mco)}$, see Theorem~\ref{th:serre}.

\begin{lemma}
\label{le:top-class}
For $A$ in $\cato_{\mco}(c)$, and any finitely generated $A$-module $M$, one has
\[
\rank_\mco \ext^c_A(\mco,M) = \rank_\mco (M/\fp M) = \beta_{A_\fp}(M_\fp)\,.
\]
In particular, as $\mco$-modules one has $\tfree{\ext^c_A(\mco,\mco)}\cong \mco$.
\end{lemma}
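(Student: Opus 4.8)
The plan is to move the computation to the regular local ring $A_\fp$, evaluate the top Ext there by a Koszul resolution, and recover the ``in particular'' clause by specializing $M$ to $\mco$.

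First I would note that, since $A/\fp\cong\mco$, the ideal $\fp$ annihilates the $A$-module $\mco$, so for each $i$ the module $\ext^i_A(\mco,M)$ is naturally a module over $A/\fp=\mco$, compatibly with the $\mco$-action fixed in the text; as $A$ is noetherian and $\mco$, $M$ are finitely generated over $A$, this module is finitely generated over $\mco$. Hence $\rank_\mco\ext^c_A(\mco,M)=\dim_K\bigl(\ext^c_A(\mco,M)\otimes_\mco K\bigr)$, and since the module is already an $\mco$-module this base change coincides with localizing the $A$-module $\ext^c_A(\mco,M)$ at $\fp$; flatness of $A\to A_\fp$ lets the localization pass inside $\ext$. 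The point is that $\mco_\fp=(A/\fp)_\fp=\operatorname{Frac}(A/\fp)=K$, so $A_\fp\to\mco_\fp$ is exactly the residue map of the regular local ring $(A_\fp,\fp A_\fp,K)$, which by hypothesis has dimension $c$. Thus
\[
\rank_\mco\ext^c_A(\mco,M)=\dim_K\ext^c_{A_\fp}(K,M_\fp)\,.
\]

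Next I would compute the right-hand side from the Koszul resolution of $K$ over $A_\fp$. Choosing a regular system of parameters $x_1,\dots,x_c$ for $A_\fp$, so that $(x_1,\dots,x_c)A_\fp=\fp A_\fp$, the Koszul complex $K(x_1,\dots,x_c;A_\fp)$ is a free resolution of $K$ whose term in top homological degree $c$ is free of rank one. Applying $\Hom_{A_\fp}(-,M_\fp)$ identifies $\ext^c_{A_\fp}(K,M_\fp)$ with the top Koszul cohomology $\hh^c(x_1,\dots,x_c;M_\fp)=M_\fp/(x_1,\dots,x_c)M_\fp=M_\fp/\fp A_\fp M_\fp$. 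Since $M_\fp/\fp A_\fp M_\fp=M_\fp\otimes_{A_\fp}K$, Nakayama's lemma gives $\dim_K(M_\fp/\fp A_\fp M_\fp)=\beta_{A_\fp}(M_\fp)$; and since $M_\fp/\fp A_\fp M_\fp=(M/\fp M)_\fp=(M/\fp M)\otimes_\mco K$, the same dimension equals $\rank_\mco(M/\fp M)$. Chaining these equalities yields the three identities of the lemma.

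For the last assertion, take $M=\mco$: as $\fp$ acts as zero on $\mco$ we get $\mco/\fp\mco=\mco$, so the identities just proved give $\rank_\mco\ext^c_A(\mco,\mco)=\rank_\mco\mco=1$, whence $\tfree{\ext^c_A(\mco,\mco)}$ is a finitely generated torsion-free module of rank one over the discrete valuation ring $\mco$, hence free of rank one. The step requiring the most care is the bookkeeping around localization at $\fp$ --- that $\ext^c_A(\mco,M)$ is genuinely a finitely generated $\mco$-module whose rank is visible after $\otimes_\mco K$, and the identification $\mco_\fp=K$ --- which reduces the statement to the classical fact that over a regular local ring of dimension $c$ the top Ext against the residue field computes the minimal number of generators. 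The rest is routine.
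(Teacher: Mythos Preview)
Your argument is correct and follows the same route as the paper: localize at $\fp$ to reduce to computing $\ext^c_{A_\fp}(K,M_\fp)$ over the regular local ring $A_\fp$, then use the Koszul resolution to identify this with $M_\fp/\fp A_\fp M_\fp$. Your write-up is in fact more detailed than the paper's sketch, which simply asserts the localization isomorphism and says the Koszul computation ``yields the desired result''; your extra care with the identification $\ext^c_A(\mco,M)\otimes_\mco K\cong\ext^c_A(\mco,M)_\fp$ and with the equality $\rank_\mco(M/\fp M)=\dim_K(M_\fp/\fp A_\fp M_\fp)$ is welcome and correct.
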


\begin{proof}
Set $R=A_\fp$; this is a local ring, with maximal ideal $\fp A_\fp$ and residue field $E$, the field of fractions of $\mco$. Since one has an isomorphism 
\[
\ext^c_A(\mco,M)_\fp \cong \ext^c_R(E,M_\fp)\,,
\]
it suffices to verify that the rank of the $E$-vector space on the right equals $\beta_R(M_\fp)$. The ring $R$ is regular, by the hypothesis on $\lambda$, so the Koszul complex on a minimal generating set for its maximal ideal, $\fp A_\fp$ is a minimal free resolution of  $E$. Computing $\ext^c_R(E,M_\fp)$ using the Koszul complex yields the desired result.
\end{proof}

\subsection*{Depth.} Fix $(A,\lambda)$ in $\cato_\mco(c)$ and a finitely generated $A$-module $M$ such that 
\[
\mathrm{depth}_AM\ge c+1\quad\text{and}\quad M_\fp \ne 0\,.
\]
The condition on the depth of $M$ holds when, for instance, it is \emph{maximal Cohen-Macaulay}, that is to say,  if $\depth_AM=\dim A$, as is often the case in the number theoretic context.  The hypothesis that $\depth_AM\ge c+1$ implies  $\depth_{A_\fp}M_\fp\ge c$, and hence  $M_\fp$ is a free $A_\fp$-module, by the Auslander-Buchsbaum equality. Set
\[
\mu\colonequals \rank_{A_\fp}M_\fp\,. 
\]
Since the codomain of the map \eqref{eq:eta-defn} is a torsion-free  $\mco$-module, it factors through $\tfree{\ext^c_A(\mco,M)}\otimes_\mco \Hom_A(M,\mco)$. In fact, when $M$ has sufficient depth the Ext-module in question is already torsion-free. This follows from the next observation.

\begin{lemma}
    \label{le:ext-tfree}
If $\depth_AM\ge c+1$, the $\mco$-module $\ext^c_A(\mco,M)$ is torsion-free.    
\end{lemma}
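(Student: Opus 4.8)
The plan is to read off torsion-freeness directly from a long exact sequence. Since $\lambda$ is surjective we may identify $\mco$ with $A/\fp$, and then $\mco/\varpi\mco\cong A/(\tilde\varpi A+\fp)$ for any lift $\tilde\varpi\in A$ of the uniformizer; this last ring is a quotient of $A$ that happens to be a field, so, $A$ being local, it is nothing but the residue field $k_A$. Multiplication by $\varpi$ on $\mco$ is $A$-linear, because the $A$-module structure on $\mco$ factors through $\lambda$, so we have a short exact sequence of $A$-modules
\[
0\longrightarrow \mco\xrightarrow{\ \varpi\ }\mco\longrightarrow k_A\longrightarrow 0\,.
\]
I would apply $\Hom_A(-,M)$ to it and examine the resulting long exact sequence of $\ext$-modules.

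The crucial stretch of that sequence is
\[
\ext^c_A(k_A,M)\longrightarrow \ext^c_A(\mco,M)\xrightarrow{\ \varpi\ }\ext^c_A(\mco,M)\,,
\]
in which the second map is multiplication by $\varpi$ for the natural $\mco$-action on $\ext^c_A(\mco,M)$---that action being, by definition, induced by the multiplication-by-$\varpi$ endomorphism of $\mco$ in the first argument. By exactness, the kernel of multiplication by $\varpi$ on $\ext^c_A(\mco,M)$ is the image of $\ext^c_A(k_A,M)$; since $\mco$ is a discrete valuation ring, this kernel vanishes precisely when $\ext^c_A(\mco,M)$ is torsion-free, so it suffices to prove $\ext^c_A(k_A,M)=0$. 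This is exactly where the depth hypothesis does its work: by the standard description $\depth_AM=\inf\{i\mid \ext^i_A(k_A,M)\ne 0\}$, the assumption $\depth_AM\ge c+1$ forces $\ext^i_A(k_A,M)=0$ for every $i\le c$, in particular for $i=c$, which finishes the argument.

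The argument is essentially formal, and I expect the only delicate point to be the bookkeeping needed to confirm that the map labelled $\varpi$ above really is multiplication by $\varpi$ for the $\mco$-structure the statement refers to; this is a matter of unwinding how the $\mco$-action on $\ext^c_A(\mco,M)$ is set up, together with the (routine) depth--$\ext$ dictionary. If one prefers not to assume $\lambda$ surjective, the same proof goes through after noting that $\mco/\varpi\mco$ is in any case a $k_A$-vector space and that $\ext$ converts a direct sum in the first variable into a product, so $\ext^c_A(\mco/\varpi\mco,M)$ is still zero. An alternative, more computational route---presumably the ``direct computation'' alluded to in the proof of Theorem~\ref{th:regular-eta}---is to localize at $\fp$, resolve $K$ over the regular local ring $A_\fp$ by the Koszul complex on a regular system of parameters, and read off $\ext^c_{A_\fp}(K,M_\fp)$ together with its $\mco$-structure; but the long exact sequence above is cleaner and, notably, uses only the depth bound on $M$ and not the regularity of $A_\fp$.
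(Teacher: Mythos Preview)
Your proof is correct and is essentially identical to the paper's: both use the short exact sequence $0\to\mco\xrightarrow{\varpi}\mco\to k_A\to 0$, apply $\Hom_A(-,M)$, and invoke the vanishing $\ext^c_A(k_A,M)=0$ coming from the depth hypothesis. Your added remarks (identifying $\mco/\varpi\mco$ with $k_A$, and the alternative localization route) are extra commentary, but the core argument matches the paper's line for line.
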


\begin{proof}
We have an  exact sequence $0\to \mco \xrightarrow{\varpi}\mco \to k_A\to 0$ of $A$-modules. Applying $\Hom_A(-,M)$ to it yields an exact sequence
\[
\cdots \longrightarrow \ext^{c}_A(k_A,M)\longrightarrow \ext^c_A(\mco,M)\xrightarrow{\ \varpi\ } \ext^c_A(\mco,M)\longrightarrow \cdots
\]
The hypothesis $\depth_AM\ge c+1$ is equivalent to $\ext_A^i(k_A,M)=0$ for $i\le c$; see \cite[Theorem~1.2.8]{Bruns/Herzog:1998a}. Thus the sequence above yields the desired result.
\end{proof}

\subsection*{Congruence module.}
Consider the map 
\[
\ext^c_A(\mco,M)\longrightarrow \tfree{\ext^c_A(\mco,\mco)}\otimes_{\mco} \tfree{(M/\fp_AM)}\cong \tfree{\ext^c_A(\mco,M/\fp_AM)}.
\]
that is adjoint to \eqref{eq:pre-eta}. It coincides with the map induced by the canonical surjection $M\to M/\fp_AM$. The cokernel of this map is the \emph{congruence module} of $M$ at $\lambda$, and denoted $\cmod_\lambda(M)$. It is the focus of our work in \cite{Iyengar/Khare/Manning:2024a}. The result below, which is an analogue of Theorem~\ref{th:regular-eta}, characterizes the $\mco$-algebras in $\cato_\mco(c)$ among all $\mco$-algebras; it is extracted from \cite[Theorem~2.2]{Iyengar/Khare/Manning/Urban:2024}.

\begin{theorem}
\label{th:Psi-torsion}
Let $A$ be a noetherian local $\mco$-algebra and $\lambda\colon A\to \mco$ a morphism of $\mco$-algebras. The following conditions are equivalent.
\begin{enumerate}[\quad\rm(1)]
    \item The local ring $A$ is regular at $\lambda$; that is to say, $(A,\lambda)$ is in $\cato_\mco$.
    \item The $\mco$-module $\cmod_\lambda(A)$ is torsion.
    \item The $\mco$-module $\cmod_\lambda(M)$ is torsion for  finitely generated $A$-modules $M$. 
\end{enumerate}
When these conditions hold, one has $\cmod_\lambda(A)=0$ if and only if $A$ is regular.\qed
\end{theorem}

The proof of this result is similar to that of Theorem~\ref{th:regular-eta}; in fact, one can be readily deduced from the other, given Lemma~\ref{le:eta-psi}.

\begin{remark}
\label{re:cmod-lattice} 
Suppose $A$ in $\cato_\mco(c)$ and $M$ is a finitely generated $A$-module with $\depth_AM\ge 1$. From \cite[Propostion~2.10]{Iyengar/Khare/Manning:2024a} we get that
\[
\cmod_\lambda(M) = \frac{M}{M[\fp]+M[I]} \quad \text{for $I=A[\fp]$}\,.
\]
This ties in with the discussion of congruence modules attached to lattice decomposition discussed in the Introduction. Indeed, suppose that  $M$ is finitely generated also as an $\mco$-module; it is torsion-free, since $\depth_AM\ge 1$, so it can be viewed as a full sub-lattice of $V= E\otimes_\mco M$, where $E$ is the field of fractions of $\mco$. Set  $V_1=V[\fp]$; this has a unique $A$-module complement in $V$ given by $V_2=V[I]$, so we get a vector space decomposition $V=V_1\oplus V_2$; see Lemma \ref{le:cmod-decomposition} below. The congruence module attached to this data, as in the Introduction, is precisely $\cmod_\lambda(M)$.
\end{remark}

Motivated by Theorem~\ref{th:Psi-torsion} one can ask what the condition  $\cmod_\lambda(M)=0$ means for an $A$-module $M$ with $A$ in $\cato_\mco(c)$. For the moment, we can answer this question only when $c=0$ and $\depth_AM\ge 1$. 

\begin{lemma}
\label{le:cmod-decomposition}
Fix $A$ in $\cato_\mco(0)$ and a finitely generated $A$-module $M$ such that $\depth_AM\ge 1$. Then $\cmod_\lambda(M)=0$ holds precisely when the $A$-submodule $M[\fp]\subseteq M$ has a direct complement. In that case, the complement is unique, and equal to $M[I]$.
\end{lemma}

\begin{proof}
With $I=A[\fp]$ the ideal $\fp+I\subseteq A$ is $\fm_A$-primary, since $A$ is in $\cato_\mco(0)$. Thus, since $M$ has positive depth $M[\fp]\cap M[I]=\{0\}$. Moreover, suppose $W$ is a $A$-submodule of $M$ with $M[\fp]\oplus W=\{0\}$. Then $\fp\cdot(I W)=0$, by the definition of $I$, so $IW\subseteq M[\fp]\cap W =0$. Thus $W\subseteq M[I]$, and we get inclusions
\[
M[\fp]\oplus W \subseteq M[\fp]\oplus M[I]\subseteq M\,.
\]
It follows that if $\cmod_\lambda(M)=0$, then $M[\fp]$ has a directly complement, $M[I]$, and also that if $M[\fp]$ has any direct complement $W$, then $W=M[I]$.
\end{proof}

\subsection*{A defect formula.}
For any $A$-module $M$ one has the K\"unneth map
\begin{equation}
\label{eq:kunneth}
\ext^c_A(\mco,A)\otimes_\mco (M/\fp M) \cong \ext^c_A(\mco,A)\otimes_AM\longrightarrow \ext^c_A(\mco,M)\,.
\end{equation}
In terms of Yoneda extensions, given an element $\zeta = 0\to A\to \cdots \to \mco \to 0$ in $\ext^c_A(\mco,A)$ and an element $m\in M$, the map sends $\zeta\otimes m$ to the push-out of the extension along the $A$-linear map $A\to M$ given by $1\mapsto m$.

Consider the induced map on torsion-free quotients
\begin{equation}
    \label{eq:kappaM}
\kappa_\lambda(M) \colon \tfree{\ext^c_A(\mco,A)} \otimes_\mco \tfree{(M/\fp M)}
            \longrightarrow \tfree {\ext^c_A(\mco,M)}\,.
\end{equation}
The domain and codomain of this map are free $\mco$-modules of rank equal to $\beta_{A_\fp}(M_\fp)$; see Lemma~\ref{le:top-class}.  From \cite[Lemma 3.7]{Iyengar/Khare/Manning:2024a} one gets an exact sequence of $\mco$-modules
\begin{equation}
\label{eq:cmodAM}
0\longrightarrow \coker \kappa_\lambda(M) \longrightarrow \cmod_\lambda(A)^\mu
        \longrightarrow \cmod_\lambda(M) \longrightarrow 0\,,
    \end{equation}
where $\mu$ is the rank of $A_\fp$-module $M_\fp$. A similar argument yields the following analogue for congruence ideals. 

\begin{lemma}
    \label{le:diff}
For $(A,\lambda)$ in $\cato_\mco(c)$ and any finitely generated $A$-module $M$, the map $\kappa_\lambda(M)$ is one-to-one, and there is an equality
\[
\eta_\lambda(A)=\ann_\mco (\coker \kappa_\lambda(M)) \cdot \eta_\lambda(M)
\]
as $\mco$-submodules of $\tfree{\ext^c_A(\mco,\mco)}$. \qed
\end{lemma}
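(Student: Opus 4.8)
The plan is to reduce everything to Smith normal forms over the discrete valuation ring $\mco$. Set $E:=\ext^c_A(\mco,\mco)$ and $\mu:=\beta_{A_\fp}(M_\fp)$. By Lemma~\ref{le:top-class}, $\tfree E\cong\mco$, and $\tfree{\ext^c_A(\mco,M)}$ and $\tfree{(M/\fp M)}$ are free $\mco$-modules of rank $\mu$; hence so is $\tfree{\ext^c_A(\mco,M/\fp M)}\cong\tfree E\otimes_\mco\tfree{(M/\fp M)}$. The surjection $M\to M/\fp M$ induces a map $\phi_M\colon\tfree{\ext^c_A(\mco,M)}\to\tfree{\ext^c_A(\mco,M/\fp M)}$ with $\Coker\phi_M=\cmod_\lambda(M)$; this cokernel is torsion by Theorem~\ref{th:Psi torsion}, so $\phi_M$ is injective, and I write $\varpi^{e_1}\mid\cdots\mid\varpi^{e_\mu}$ for its invariant factors, so that $\cmod_\lambda(M)\cong\bigoplus_{i=1}^\mu\mco/(\varpi^{e_i})$ and, by Lemma~\ref{le:eta-psi}, $\eta_\lambda(M)=(\varpi^{e_1})\tfree E$. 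Applying this to $M=A$, where $\mu=1$ and $\cmod_\lambda(A)\cong\mco/(\varpi^{e})$, gives $\eta_\lambda(A)=(\varpi^{e})\tfree E$; note that then $\phi_A\colon\tfree{\ext^c_A(\mco,A)}\to\tfree E$ is, up to a choice of generators, multiplication by $\varpi^{e}$.

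The key input is the compatibility of the K\"unneth map \eqref{eq:kunneth} with the surjection $M\to M/\fp M$. Exactly as in the proof of \eqref{eq:cmodAM}, that is, via \cite[Lemma~3.7]{Iyengar/Khare/Manning:2024a}, one has an equality of maps $\tfree{\ext^c_A(\mco,A)}\otimes_\mco\tfree{(M/\fp M)}\to\tfree{\ext^c_A(\mco,M/\fp M)}$:
\[
\phi_M\circ\kappa_\lambda(M)=\phi_A\otimes\idmap_{\tfree{(M/\fp M)}}\,,
\]
and \eqref{eq:cmodAM} is obtained from it by the snake lemma. The right-hand map is multiplication by $\varpi^{e}$ on a free module of rank $\mu$, hence injective (as $e<\infty$); therefore $\kappa_\lambda(M)$ is injective, which is the first assertion. (Injectivity can also be seen directly, by localizing at $\fp$ and computing the K\"unneth map via the Koszul resolution of $K$ over the regular local ring $A_\fp$.)

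For the ideal equality I would then argue over $\mco$ and its fraction field $K$. Tensoring $\phi_M\circ\kappa_\lambda(M)=\varpi^{e}\cdot\idmap$ with $K$ makes $\phi_M$ invertible, so $\kappa_\lambda(M)=\varpi^{e}\phi_M^{-1}$ over $K$; comparing Smith normal forms (and using $e\ge e_\mu$, which holds because $\kappa_\lambda(M)$ is $\mco$-linear) shows that the invariant factors of $\kappa_\lambda(M)$ are $\varpi^{e-e_\mu}\mid\cdots\mid\varpi^{e-e_1}$. Hence $\Coker\kappa_\lambda(M)\cong\bigoplus_{i=1}^\mu\mco/(\varpi^{e-e_i})$ and $\ann_\mco(\Coker\kappa_\lambda(M))=(\varpi^{e-e_1})$, so that
\[
\ann_\mco(\Coker\kappa_\lambda(M))\cdot\eta_\lambda(M)=(\varpi^{e-e_1})(\varpi^{e_1})\tfree E=(\varpi^{e})\tfree E=\eta_\lambda(A)\,.
\]

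The step I expect to require real care is the displayed identity in the second paragraph: the naturality of the K\"unneth map \eqref{eq:kunneth} with respect to $M\to M/\fp M$, in exactly the form that factors multiplication by $\varpi^{e}$ through $\phi_M$. This is the same mechanism behind \eqref{eq:cmodAM}, so it is available; granted it, the rest is routine linear algebra over $\mco$. A minor bookkeeping point is to keep $\mu=\beta_{A_\fp}(M_\fp)$ aligned with the indexing used in \eqref{eq:cmod-structure} and Lemma~\ref{le:eta-psi}.
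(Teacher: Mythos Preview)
Your argument is correct and follows the route the paper indicates: the key input is precisely the compatibility $\phi_M\circ\kappa_\lambda(M)=\phi_A\otimes\idmap_{\tfree{(M/\fp M)}}$, which is the content of \cite[Lemma~3.7]{Iyengar/Khare/Manning:2024a} and is what underlies \eqref{eq:cmodAM}. The paper gives no further details beyond pointing to that lemma, so your Smith-normal-form computation is a legitimate way to extract the ideal equality from it; in particular, your observation that the invariant factors of $\kappa_\lambda(M)$ are $\varpi^{e-e_\mu},\ldots,\varpi^{e-e_1}$ recovers both the cokernel description implicit in \eqref{eq:cmodAM} and the annihilator $(\varpi^{e-e_1})$ needed here.
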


Fix $(A,\lambda)$ in $\cato_\mco$ and a finitely generated $A$-module $M$, and set $\mu=\rank_{A_\fp}(M_\fp)$. It follows from  Theorem~\ref{th:Psi-torsion} that as $\mco$-modules, one has an isomorphism
\begin{equation}
\label{eq:cmod-structure}
\cmod_\lambda(M)\cong \bigoplus_{i=1}^{\mu} \frac{\mco}{(\varpi^{e_i})} 
            \quad \text{for integers $0\le e_1\le e_2\le  \cdots \le e_\mu<\infty$.}
\end{equation}
The following observation is straightforward to verify. The last statement follows from the fact that $\eta_\lambda(A)$ annihilates $\cmod_\lambda(M)$; see (\ref{eq:cmodAM}).

\begin{lemma}
\label{le:eta-psi}
\pushQED{\qed}
    One has $\eta_\lambda(M)=(\varpi^{e_1}) \tfree{\ext^c_A(\mco,\mco)}$, and hence 
    \[
    \cmod_\lambda(M)\cong \tfree{\ext^c_A(\mco,\mco)}/\eta_\lambda(M) \quad \text{when $\mu=1$}. \qedhere
    \] 
    Moreover $\eta_\lambda(A) \subseteq (\varpi^{e_\mu})$, and hence if $\eta_\lambda(A)=\eta_\lambda(M)$, then $\cmod_\lambda(M)=\cmod_\lambda(A)^\mu$, where $\mu$ is the rank of $M$ at $\lambda$.
\end{lemma}

Thus the congruence module of $M$ is a more refined invariant than the congruence ideal. Nevertheless, in this write-up, we focus on the congruence ideal, for reasons explained in the Introduction. 

One of the key results in our work is a criterion for $M$ to have a free summand, in terms of the congruence modules of $A$ and of $M$; see  \cite[Theorem~9.2]{Iyengar/Khare/Manning:2024a}. Here is the analogue involving the congruence ideals.

\begin{theorem}
\label{th:splitting}
Fix $A$ in $\cato_\mco(c)$ and a finitely generated $A$-module $M\ne 0$.  When the ring $A$ is Gorenstein and $M$ is maximal Cohen-Macaulay, $\eta_\lambda(M)=\eta_\lambda(A)$ if and only if $M\cong A^\mu\oplus W$, where $\mu\colonequals \beta_{A_\fp}(M_\fp)$ and $W_\fp=0$.
\end{theorem}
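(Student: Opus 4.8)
The plan is to leverage Theorem~\ref{th:splitting}'s companion in \cite{Iyengar/Khare/Manning:2024a} — the freeness criterion stated in terms of congruence \emph{modules} — together with Lemma~\ref{le:eta-psi}, which translates between the congruence ideal and the first elementary divisor of the congruence module. Set $\mu=\beta_{A_\fp}(M_\fp)$. First I would dispose of the easy direction: if $M\cong A^\mu\oplus W$ with $W_\fp=0$, then by Lemma~\ref{le:add-eta} one has $\eta_\lambda(M)=\eta_\lambda(A^\mu)+\eta_\lambda(W)=\eta_\lambda(A)+\eta_\lambda(W)$, and since $W_\fp=0$ the module $\ext^c_A(\mco,W)$ is torsion (its localization at $\fp$ vanishes), so its image in the torsion-free module $\tfree{\ext^c_A(\mco,\mco)}$ is zero, giving $\eta_\lambda(W)=0$ and hence $\eta_\lambda(M)=\eta_\lambda(A)$. (Here I also use $\eta_\lambda(A^\mu)=\eta_\lambda(A)$, immediate from Lemma~\ref{le:add-eta}.)

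For the substantive direction, assume $\eta_\lambda(M)=\eta_\lambda(A)$. Since $A$ is Gorenstein it is in particular Cohen-Macaulay, so $\dim A=\dim A_\fp+\dim(A/\fp)=c+1$; thus a maximal Cohen-Macaulay $M$ has $\depth_A M=c+1$, and Lemma~\ref{le:ext-tfree} tells us $\ext^c_A(\mco,M)$ is already torsion-free, so $\tfree{(-)}$ is harmless on it. The key input is the defect formula for congruence modules, the exact sequence~\eqref{eq:cmodAM}: it shows $\cmod_\lambda(M)$ is a quotient of $\cmod_\lambda(A)^\mu$, with kernel $\Coker\kappa_\lambda(M)$. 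Combining the structure result~\eqref{eq:cmod-structure} with Lemma~\ref{le:eta-psi}, the hypothesis $\eta_\lambda(M)=\eta_\lambda(A)$ forces the smallest elementary divisor $e_1$ of $\cmod_\lambda(M)$ to equal that of $\cmod_\lambda(A)$. I would then invoke \cite[Theorem~9.2]{Iyengar/Khare/Manning:2024a}: for $A$ Gorenstein and $M$ maximal Cohen-Macaulay, the statement $\cmod_\lambda(M)\cong\cmod_\lambda(A)^\mu$ (equivalently, $\Coker\kappa_\lambda(M)=0$, equivalently the length of $\cmod_\lambda(M)$ equals $\mu$ times that of $\cmod_\lambda(A)$) is exactly the condition that $M\cong A^\mu\oplus W$ with $W_\fp=0$. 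So the crux is upgrading the equality of \emph{smallest} elementary divisors, $e_1=e_1(A)$, to the equality of the whole congruence modules.

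The main obstacle is precisely this upgrade, and I expect it to require the Gorenstein hypothesis in an essential way, via a self-duality argument. The point is that for $A$ Gorenstein in $\cato_\mco(c)$, the module $\ext^c_A(\mco,A)$ is free of rank one over a suitable ring (it represents a dualizing-type functor), so that $\kappa_\lambda(M)$ can be identified, up to the canonical identifications of Lemma~\ref{le:top-class}, with a map between free $\mco$-modules of rank $\mu$ whose cokernel $\cmod_\lambda(A/\fp\text{-level data})$ is controlled by a single matrix; Lemma~\ref{le:diff} then reads $\eta_\lambda(A)=\ann_\mco(\Coker\kappa_\lambda(M))\cdot\eta_\lambda(M)$, so $\eta_\lambda(M)=\eta_\lambda(A)$ forces $\ann_\mco(\Coker\kappa_\lambda(M))=\mco$, i.e.\ $\Coker\kappa_\lambda(M)=0$. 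This is cleaner than the elementary-divisor bookkeeping and sidesteps the gap between $e_1$ and the full list $e_1\le\cdots\le e_\mu$: Lemma~\ref{le:diff} already packages the correct statement. Thus the real content reduces to Lemma~\ref{le:diff} plus the freeness criterion \cite[Theorem~9.2]{Iyengar/Khare/Manning:2024a}, and the proof is a short assembly once those are in hand; the care needed is in checking that $\kappa_\lambda(M)$ being injective with cokernel of \emph{full} $\mco$-torsion support (rather than just nonzero at some prime) is what the vanishing of the annihilator encodes, which is automatic since $\mco$ is a discrete valuation ring and $\Coker\kappa_\lambda(M)$ is a finite-length $\mco$-module.
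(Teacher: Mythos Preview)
Your proposal is correct and, once you arrive at the clean route via Lemma~\ref{le:diff}, it coincides exactly with the paper's proof: the paper simply notes that by Lemma~\ref{le:diff} one has $\eta_\lambda(M)=\eta_\lambda(A)$ if and only if $\ann_\mco(\Coker\kappa_\lambda(M))=\mco$, i.e.\ $\Coker\kappa_\lambda(M)=0$, and then cites \cite[Theorem~9.2]{Iyengar/Khare/Manning:2024a}. Your separate treatment of the easy direction and the detour through elementary divisors are unnecessary, since the equivalence furnished by Lemma~\ref{le:diff} is already two-sided and bypasses \eqref{eq:cmod-structure} and Lemma~\ref{le:eta-psi} entirely.
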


\begin{proof}
The equality  $\eta_\lambda(M)=\eta_\lambda(A)$ holds if and only if $\ann_\mco(\coker \kappa(M))=\mco$; equivalently, if and only if $\coker \kappa(M) =0$; see Lemma~\ref{le:diff}. Thus the desired statement is equivalent to  \cite[Theorem~9.2]{Iyengar/Khare/Manning:2024a}.    
\end{proof}

\subsubsection*{Notes}
The import of Lemma~\ref{le:diff} is that, in the number theoretic context, in particular in modularity lifting, one has often a good grasp on the module $M$, and little information on the ring $A$; this has been explained in the Introduction. Lemma~\ref{le:diff} allows one to deduce information on $A$ through that on $M$. 

The K\"unneth map~\eqref{eq:kunneth} factors through $\overline{\ext}^c_A(\mco,M)$, the \emph{bounded cohomology} module  of the pair $(\mco,M)$, introduced in \cite{Avramov/Veliche:2007}. These cohomology modules sit in a long exact sequence
\[
\cdots \longrightarrow \overline{\ext}^i_A(\mco,M)\longrightarrow \ext^i_A(\mco,M) \longrightarrow
         \widehat{\ext}^i_A(\mco,M)\longrightarrow \overline{\ext}^{i+1}_A(\mco,M)\longrightarrow\cdots
\]
where the $\widehat{\ext}^i_A(\mco,M)$ are the \emph{stable cohomology} modules of $(\mco,M)$. When the ring $A$ is Gorenstein, this coincides with the Tate cohomology modules of the pair; see, for instance, Buchweitz~\cite{Buchweitz:2021a}. 

The stable cohomology $\widehat{\ext}^*_A(L,M)$ of a pair $(L,M)$ is equal to $0$ when one of $L$ or $M$ has finite projective dimension, which, for maximal Cohen-Macaulay modules, is equivalent, to freeness. This remark clarifies why properties of the K\"unneth map control the existence of free summands in $M$, as in Theorem~\ref{th:splitting}.

The analogue of the bounded and stable cohomology modules for the augmentation $R\to k$ of a local ring to its residue field has been studied extensively in local algebra; see~\cite{Avramov/Veliche:2007}, and more recently, in the work of the first author with Maitra and Tribone~\cite{Iyengar/Maitra/Tribone:2025}. In this context, the long exact sequence above take the form
\[
\ext^*_R(k,R)\otimes_k \mathrm{Tor}^R_*(k,N) \longrightarrow \ext^*_R(k,N) 
            \longrightarrow \widehat{\ext}^*_R(k,N)\longrightarrow
\]
See \cite{Avramov/Veliche:2007} for some new applications of this sequence in commutative algebra.

\subsection*{Change of modules.}
Let $\omega_A$ be a dualizing complex for $A$ normalized as in 
\cite[\href{https://stacks.math.columbia.edu/tag/0A7M}{Tag 0A7M}]{stacks-project}. It exists because $A$ is a complete local ring; the normalization ensures that it is  unique up to an isomorphism of complexes.  Given a maximal Cohen-Macaulay module $M$, let $M^\vee$ denote the $A$-module $\hh_{d}(\Hom_A(M,\omega_A))$, where $d$ is the Krull dimension of $A$. We say $M$ is \emph{self-dual} if $M\cong M^\vee$; see \cite[\S4]{Iyengar/Khare/Manning:2024a}.

Fix a surjective map $\pi\colon M\to N$ where $M,N$ are maximal Cohen-Macaulay $A$-modules that are self-dual and such that $\pi_\fp$ is an isomorphism. Consider the following diagram where $\alpha$ is defined to make the diagram commute:
\[
\begin{tikzcd}
\ext^c_A(\mco,M) \arrow[d,dashrightarrow, "\alpha"]  \arrow[rr,"{\ext^c_A(\mco,\pi)}"]
            && \ext^c_A(\mco,N) \arrow[d, "\cong"]  \\
\ext^c_A(\mco,M) \cong \ext^c_A(\mco,M^\vee)&&  \arrow[ll,"{\ext^c_A(\mco,\pi^\vee)}"] \ext^c_A(\mco,N^\vee)
\end{tikzcd}
\]
 By Lemma~\ref{le:ext-tfree} the source and target of $\alpha$ are free, that is to say
\[
\alpha\colon \ext^c_A(\mco,M) \longrightarrow \ext^c_A(\mco,M)
\]
is an endomorphism of a free $\mco$-module, of rank equal to $\beta_{A_\fp}(M_\fp)$. In \cite[Proposition~4.4]{Iyengar/Khare/Manning:2024a}, we prove that 
\[
\length_\mco \Psi_\lambda(M) = \length_\mco \Psi_\lambda(N) + \length_\mco (\mco/(\det \alpha))\,. 
\]
Here is the analogous statement in terms of congruence ideals; it can be proved along the same lines as \emph{op.~cit.}

\begin{lemma}
\label{le:change-of-module}
    One has an equality $\eta_\lambda(M)=\ann_\mco(\coker \alpha)\cdot \eta_\lambda(N)$. \qed
\end{lemma}

\subsubsection*{Notes} 
Combined with the invariance of domain property, Proposition~\ref{pr:invariance-of-domain} below, Lemma~\ref{le:change-of-module} plays a crucial role in establishing $R=\mbb{T}$ theorems in number theory; see the discussion on Wiles' work in Section~\ref{se:NT}.

It is worth extracting the observation below, implicit in the proof of Lemma~\ref{le:change-of-module}. 

\begin{lemma}
\label{le:eta-test}
If there exists an $A$-linear map $M\to N$ of finitely generated $A$-modules such that the induced map $\Hom_A(N,\mco)\to \Hom_A(M,\mco)$ is surjective, then one has that $\eta_\lambda(M)\subseteq \eta_\lambda(N)$. 
\end{lemma}

\begin{proof}
Let $f \colon M\to N$ be such a map, set $f_*\colonequals \ext^c_A(\mco,f)$ and $f^*=\Hom_A(f,\mco)$. Consider the diagram where the horizontal maps are the maps \eqref{eq:eta-defn} involved in the definition of the congruence ideals:
\[
\begin{tikzcd}
    \ext^c_A(\mco,M) \otimes_\mco \Hom_A(M,\mco) \arrow[r,"\upsilon_M"] & \ext^c_A(\mco,\mco) \\
    \ext^c_A(\mco,N) \otimes_\mco \Hom_A(N,\mco) \arrow[r,"\upsilon_N"] 
            \arrow[u, leftarrow, shift left=6ex,"{f_*}"]
            \arrow[u,shift right=6ex,"{f^*}" swap]
    & \ext^c_A(\mco,\mco) \arrow[u,equals]
\end{tikzcd}
\]
One can check that for $\zeta\in \ext^c_A(\mco,M)$ and $\alpha\in \Hom_A(N,\mco)$, one 
\[
\upsilon_M(\zeta\otimes f^*\alpha) = \upsilon_N(f_*\zeta\otimes \alpha)
\]
Given the hypothesis that $f^*$ is surjective, it is then clear that $\eta_\lambda(M)\subseteq\eta_\lambda(N)$.
\end{proof}

Here is one application of this result.

\begin{proposition}
    \label{pr:trace-map}
 Fix a Cohen-Macaulay ring $(A,\lambda)$ in $\cato_{\mco}(c)$  with dualizing module $\omega_A$ and a finitely generated $A$-module $M$. 
 \begin{enumerate}[\quad\rm(1)]
     \item 
     When $M$ is a Cohen-Macaulay module one has $\eta_\lambda(M)=\eta_\lambda(M^\vee)$.
     \item 
     If $\beta_{A_\fp}(M_\fp)=1$ and the trace map $\Hom_A(M,\omega_A)\otimes_AM\to \omega_A$ is onto, then $\eta_{\lambda}(M)=\eta_{\lambda}(A)$.
      \end{enumerate}
\end{proposition}

 \begin{proof}
     By the local duality theorem~\cite[\href{https://stacks.math.columbia.edu/tag/0A7C}{Tag 0A7C}]{stacks-project} the functor $M\mapsto \Hom_A(M,\omega_A)$ on the bounded derived category of $A$ induces an isomorphism of $\mco$-modules
     \[
     \ext^c_A(\mco,M) \cong \Hom_A(M^\vee,\mco)\,.
     \]
    This also uses the hypothesis that the module $M$ is maximal Cohen-Macaulay, and the fact that the ring $\mco$ is Gorenstein. Given this, part (1) is immediate from the definition~\eqref{eq:eta-defn} of congruence ideals.
     
     As to (2), one has always that $\eta_{\lambda}(A)\subseteq \eta_{\lambda}(M)$, by Lemma~\ref{le:eta-psi}, so it suffices to find an $A$-linear map $f\colon M\to \omega_A$ such that the induced map $\Hom_{A}(f,\mco)$ is onto, for then one gets that
     \[
     \eta_{\lambda}(M)\subseteq \eta_{\lambda}(\omega_A) = \eta_{\lambda}(A)\,,
     \]
     where the inclusion is by Lemma~\ref{le:eta-test} and the equality is by (1).
     
     The surjectivity of the trace map is tantamount to the existence of a surjective $A$-linear map
\[
\coprod_{i=1}^n f_i\colon M^n\twoheadrightarrow \omega_A \quad \text{for $f_i\colon M\to \omega_A$}\,.
\]
Since $\Hom_A(\omega_A,\mco)\ne 0$, as can be verified by localizing at $\fp$, one can find a nonzero $A$-linear map $g\colon \omega_A \to \mco$. Since the image of such a map is an ideal of $\mco$ and hence isomorphic to $\mco$, so we can assume that $g$ is surjective. Then the composition $g(\coprod f_i)$ is also surjective. Since $\mco$ is local, it follows that $gf_i\colon M\to \mco$ is surjective for some integer $i$. This map factors through $M/\fp M$ and gives rise to the commutative diagram 
\[
\begin{tikzcd}
    M \arrow[r,"f_i"] \arrow[d] & \omega_A \arrow[d,twoheadrightarrow,"g"] \\
    \tfree{(M/\fp M)} \arrow[r,twoheadrightarrow,"\overline{gf_i}"] & \mco
\end{tikzcd}
\]
The hypotheses $\beta_{A_\fp}(M_\fp)=1$ translates to $\rank_{\mco}(M/\fp M)=1$. It thus follows from the diagram above that the induced map $\overline{gf_i}$ is an isomorphism. Hence the map  $\Hom_A(gf_i,\mco)$ is an isomorphism, which is more that what we need.
 \end{proof}

\begin{remark}
In the context of the Proposition~\ref{pr:trace-map}(2), assume also that $M$ is maximal Cohen-Macaulay module. It follows from local duality that when the trace map is surjective, the natural map $A\to \Hom_A(M,M)$ is an isomorphism; that is to say, $M$ is \emph{balanced}. The converse holds when $c=0$, but for $c\ge 1$ surjectivity of trace is a strictly stronger condition.

Another comment: the converse of Proposition \ref{pr:trace-map}(2) does not hold; see \cite[Remark 3.13]{Bockle/Khare/Manning:2021b}.    
\end{remark}
 
\subsection*{The structure of $\tfree{\ext^*_A(\mco,\mco)}$.}
To go further, we need one more input from commutative algebra. Fix $(A,\lambda)$ in $\cato_\mco(c)$. The graded $\mco$-module 
\[
\ext^*_A(\mco,\mco)=\{\ext^i_A(\mco,\mco)\}_{i\geqslant 0}
\]
has a structure of an associative algebra, with product given by Yoneda composition. The product agrees (up to sign) with the composition of maps in the derived category of $A$. Typically, this Ext algebra is highly non-commutative, and non-zero in infinitely many degrees, unless $A$ is regular; see \cite{Gulliksen/Levin:1969a} or \cite[Section~10]{Avramov:1998a}.

By construction, one has an inclusion of subrings
\[
\tfree{\ext^*_A(\mco,\mco)} \subseteq \ext^*_{A_\fp}(K,K)\cong \wedge_K \ext^1_{A_\fp}(K,K)\,,
\]
where $K$ is the residue field of $A_\fp$; said otherwise, the field of fractions of $\mco$. The isomorphism holds because the local ring $A_\fp$ is regular; see \cite[Chapter 2, \S4]{Gulliksen/Levin:1969a}. In particular, the ring on the right is the graded-commutative, and hence so is its subring $\tfree{\ext^*_A(\mco,\mco)}$. Thus, by the universal property of exterior algebras, the inclusion $\ext^1_A(\mco,\mco)\subset \ext^*_A(\mco,\mco)$ induces a map of graded $\mco$-algebras
\[
\xi_\lambda \colon \wedge^*_{\mco} \ext^1_A(\mco,\mco) \longrightarrow \tfree{\ext^*_A(\mco,\mco)}\,.
\]
Here is the punchline; see \cite[Theorem~6.8]{Iyengar/Khare/Manning:2024a}. It may be seen as an integral version of Serre's theorem describing the Ext-algebra of a regular local ring, mentioned above. It  is a critical ingredient in tracking the change of congruence modules and congruence ideals; see Proposition~\ref{pr:invariance-of-domain} and Theorem~\ref{th:deformation}.

\begin{theorem}
   \label{th:serre}
   For $(A,\lambda)$ in $\cato_\mco(c)$, the map $\xi_\lambda$ defined above is bijective.  In particular there is a natural isomorphism of $\mco$-modules 
   \[
   \tfree{\ext^c_A(\mco,\mco)} \cong \wedge^c_\mco \Hom_{\mco}(\fp/\fp^2,\mco),.
   \]
\end{theorem}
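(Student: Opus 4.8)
The plan is to prove bijectivity of $\xi_\lambda$ by localizing at $\fp$, where the statement becomes Serre's classical description of the Ext-algebra of a regular local ring, and then descending the isomorphism back to $A$ using the fact that $\tfree{\ext^*_A(\mco,\mco)}$ is by construction a subring of $\ext^*_{A_\fp}(K,K)$. First I would recall that since $A_\fp$ is regular of dimension $c$ (because $(A,\lambda)\in\cato_\mco(c)$), Serre's theorem gives a graded $K$-algebra isomorphism $\wedge^*_K\ext^1_{A_\fp}(K,K)\xrightarrow{\ \cong\ }\ext^*_{A_\fp}(K,K)$, and $\ext^1_{A_\fp}(K,K)\cong\Hom_K(\fp A_\fp/\fp^2A_\fp,K)$ is a $K$-vector space of dimension $c$. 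In particular $\ext^i_{A_\fp}(K,K)=0$ for $i>c$, so $\tfree{\ext^i_A(\mco,\mco)}=0$ for $i>c$ as well, and everything in sight is concentrated in degrees $0\le i\le c$.

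Next I would analyze the degree-one piece. By \eqref{eq:normal} we have $\ext^1_A(\mco,\mco)\cong\Hom_\mco(\fp/\fp^2,\mco)$, which is torsion-free (being a Hom into $\mco$), hence $\tfree{\ext^1_A(\mco,\mco)}=\ext^1_A(\mco,\mco)$; and this is a free $\mco$-module, of rank $c$, because its base change to $K$ is $\ext^1_{A_\fp}(K,K)$, which has dimension $c$. So in degree one the map $\xi_\lambda$ is an isomorphism onto $\tfree{\ext^1_A(\mco,\mco)}$. The universal property of the exterior algebra then produces the graded $\mco$-algebra map $\xi_\lambda$, and on $K$-points it becomes the Serre isomorphism — in particular $\xi_\lambda\otimes_\mco K$ is bijective. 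This already shows $\xi_\lambda$ is injective (its source $\wedge^*_\mco\ext^1_A(\mco,\mco)$ is $\mco$-free, hence torsion-free, so injective after $\otimes_\mco K$ forces injective) and that its image has full rank in each degree.

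The crux is therefore to show $\xi_\lambda$ is \emph{surjective}, equivalently that the subring $\tfree{\ext^*_A(\mco,\mco)}\subseteq\ext^*_{A_\fp}(K,K)=\wedge^*_K\ext^1_{A_\fp}(K,K)$ is not merely full rank but actually equals the image of $\wedge^*_\mco\bigl(\tfree{\ext^1_A(\mco,\mco)}\bigr)$, i.e.\ is saturated. Here I would argue degree by degree: an element of $\tfree{\ext^c_A(\mco,\mco)}$ is, via the inclusion, an element of the rank-one $K$-space $\wedge^c_K\ext^1_{A_\fp}(K,K)$, and I must show it lies in the $\mco$-span of $\wedge^c$ of an $\mco$-basis of $\ext^1_A(\mco,\mco)$. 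This is the place where one genuinely uses the ring structure of $\ext^*_A(\mco,\mco)$ over $A$ (not just over $A_\fp$) — the point being that multiplication by an integral class in $\ext^1$ sends integral classes to integral classes, and one leverages a change-of-rings/Koszul computation (the kind invoked in the proof of Lemma~\ref{le:top-class} and Lemma~\ref{le:ext-tfree}) to identify $\tfree{\ext^c_A(\mco,\mco)}$ with $\wedge^c_\mco$ of the normal module on the nose. I expect the main obstacle to be exactly this saturation/surjectivity step: controlling the integral structure in the top degree $c$ and ruling out that the torsion-free quotient of $\ext^c_A(\mco,\mco)$ is a proper finite-index enlargement of the exterior-power lattice. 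One clean way to close it is to invoke Theorem~\ref{th:regular-eta}: its "moreover" clause, together with the multiplicativity of $\xi_\lambda$ and the fact that $\eta_\lambda(A)$ generates a direct summand in degree $c$ under suitable hypotheses, pins down the cokernel of $\xi_\lambda$ in top degree; combined with the degree-one isomorphism and graded-commutativity this forces $\xi_\lambda$ to be bijective in all degrees, and the displayed formula $\tfree{\ext^c_A(\mco,\mco)}\cong\wedge^c_\mco\Hom_\mco(\fp/\fp^2,\mco)$ follows by taking the degree-$c$ component and using \eqref{eq:normal}.
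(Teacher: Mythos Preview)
You correctly establish injectivity of $\xi_\lambda$ (the source is $\mco$-free, and $\xi_\lambda\otimes_\mco K$ is Serre's isomorphism for the regular local ring $A_\fp$), and you correctly identify surjectivity---equivalently, saturation of the image lattice inside $\ext^*_{A_\fp}(K,K)$---as the crux. But your proposed fix does not close the gap. The ``moreover'' clause of Theorem~\ref{th:regular-eta} asserts $\eta_\lambda(A)=\tfree{\ext^c_A(\mco,\mco)}$ \emph{only when $A$ is regular}; for general $(A,\lambda)\in\cato_\mco(c)$ the ideal $\eta_\lambda(A)$ is a proper submodule and says nothing about the cokernel of $\xi_\lambda$. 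The observation that products of integral classes are integral only shows the image of $\xi_\lambda$ is contained in $\tfree{\ext^*_A(\mco,\mco)}$, which is the inclusion you are trying to reverse; and the Koszul arguments behind Lemmas~\ref{le:top-class} and~\ref{le:ext-tfree} compute ranks, not lattice indices. So as written, the surjectivity step is genuinely open.

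The paper's device is to produce a map \emph{out of} $\tfree{\ext^*_A(\mco,\mco)}$ that bounds it from above. This is Grothendieck's fundamental local homomorphism
\[
\ext^*_A(\mco,\mco)\longrightarrow\Hom_\mco\bigl(\wedge^*_\mco(\fp/\fp^2),\mco\bigr),
\]
adjoint to the pairing $\ext^i_A(\mco,\mco)\otimes_\mco\wedge^i_\mco(\fp/\fp^2)\to\ext^i_A(\mco,\mco)\otimes_\mco\mathrm{Tor}^A_i(\mco,\mco)\to\mco$. Its target is torsion-free, so it factors through $\tfree{\ext^*_A(\mco,\mco)}$; localizing at $\fp$ shows this induced map is injective. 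The composite with $\xi_\lambda$ is, via~\eqref{eq:normal}, the canonical map $\wedge^*_\mco(N^*)\to(\wedge^*_\mco N)^*$ for $N=\tfree{(\fp/\fp^2)}$, a free $\mco$-module of rank $c$, which is an isomorphism by elementary multilinear algebra over a DVR. Since the composite is bijective and the second map is injective, both maps are bijective---and that supplies exactly the surjectivity your argument was missing.
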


\begin{proof}[Sketch of proof]
    In \cite{Iyengar/Khare/Manning:2024a} we gave a proof that draws on the methods of differential graded algebras, and in particular \cite{Iyengar:2001}, which in turn relies heavily on the work of Tate~\cite{Tate:1957a} and Andr\'e~\cite{Andre:1982}. Here is a different argument that hinges on a natural map 
    \[
    \ext^*_A(\mco,\mco)\longrightarrow \Hom_\mco (\wedge^*_{\mco}(\fp/\fp^2),\mco)\,,
    \]
    introduced by Grothendieck~\cite[\S3]{Grothendieck:1957b}, and called the \emph{fundamental local homomorphism} by Lipman~\cite[pp. 111]{Lipman:1984a}. It is adjoint to the composition
    \[
\ext^i_A(\mco,\mco)\otimes_\mco \wedge_{\mco}^i(\fp/\fp^2)  \to \ext^i_A(\mco,\mco)\otimes_\mco \mathrm{Tor}_i^A(\mco,\mco)\to \mathrm{Tor}^A_0(\mco, \mco)\cong \mco\,.
\]
In \cite[(2.9)]{Lipman:1987a} Lipman asks us to verify that the fundamental local homomorphism is a map of graded algebras, where the product on the left is the usual one, and the one on the right is induced by the natural co-algebra structure on $\wedge^*_{\mco}(\fp/\fp^2)$; in particular, it is graded-commutative. Since the target of the map above is torsion-free as an $\mco$-module it induces the map of graded algebras on the right:
    \[
     \wedge^*_{\mco} \ext^1_A(\mco,\mco) \xrightarrow{\ \xi_\lambda\ } \tfree{\ext^*_A(\mco,\mco)} 
            \longrightarrow \Hom_\mco (\wedge^*_{\mco}(\fp/\fp^2),\mco)\,.
    \]
    Keeping in mind \eqref{eq:normal}, it is clear that the composition is an isomorphism. Localizing at $\fp$ it is easy to verify that both maps above are one-to-one, so we deduce that they are both isomorphisms. 

It remains to do the homework that Lipman assigns. The solution we came up with is similar to the proof in Gulliksen and Levin~\cite[Chapter 2, \S3]{Gulliksen/Levin:1969a} that, for a local ring $R$ with residue field $k$, the composition product on $\ext_R^*(k,k)$ is dual to a co-product on $\mathrm{Tor}^R_*(k,k)$. This again uses methods of differential graded homological algebra, so perhaps these techniques are unavoidable. But the proof sketched above yields more: $\tfree{\ext^*_A(\mco,\mco)}$ is a Hopf algebra over $\mco$, with coproduct dual to the product on $\mathrm{Tor}^A_*(\mco,\mco)$. In particular, it commutative and co-commutative.
\end{proof}

\subsection*{Invariance of domain.}
Let $\alpha\colon (A,\lambda_A)\to (B,\lambda_B)$ be a morphism in $\cato_\mco(c)$. For any $B$-module $N$, functoriality of Ext gives a commutative diagram
\[
\begin{tikzcd}
    \ext^c_A(\mco,N) \otimes_\mco \Hom_A(N,\mco) \arrow[r] & \ext^c_A(\mco,\mco) \\
    \ext^c_B(\mco,N) \otimes_\mco \Hom_B(N,\mco) \arrow[r] 
            \arrow[u,shift left=6ex,"{\ext_{\alpha}^c(\mco,N)}"]
            \arrow[u,shift right=6ex,"{\Hom_{\alpha}(N,\mco)}" swap]
    & \ext^c_B(\mco,\mco) \arrow[u,"{\ext_{\alpha}^c(\mco,\mco)}" swap]
\end{tikzcd}
\]
Passing to torsion-free quotients, one gets an induced commutative diagram 
\[
\begin{tikzcd}
    \eta_{\lambda_A}(N) \subseteq \tfree{\ext^c_A(\mco,\mco)} \\
    \eta_{\lambda_B}(N) \subseteq \tfree{\ext^c_B(\mco,\mco)} 
     \arrow[u,shift left=6ex,"{\eta_{\alpha}(N)}"]
            \arrow[u,shift right=6ex,"{\ext^c_\alpha(\mco,\mco)}" swap]   
\end{tikzcd}
\]
The statement below, which we think of as an "invariance of domain" property of congruence ideals, is analogous to \cite[Theorem~7.4]{Iyengar/Khare/Manning:2024a}, and can be proved along the same lines; see Proposition~\ref{pr:invariance-of-domain} for an extension of this result.

\begin{proposition}
\label{pr:invariance-of-domain}
    In the situation above, if the map $\alpha$ is surjective, then the maps $\eta_{\alpha}(N)$ and $\tfree{\ext^c_{\alpha}(\mco,\mco)}$ are isomorphisms. \qed
    \end{proposition}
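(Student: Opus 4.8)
The plan is to prove the two assertions in sequence, exploiting the fact that for a surjection $\varphi\colon A \to B$, the augmentations $\lambda_A$ and $\lambda_B$ have the \emph{same} kernel after we identify $\Spec B$ with a closed subscheme of $\Spec A$. More precisely, writing $\mathfrak{a} = \Ker\varphi$, one has $\mathfrak{p}_A = \varphi^{-1}(\mathfrak{p}_B)$ and $\mathfrak{p}_B = \mathfrak{p}_A/\mathfrak{a}$, and since both $(A,\lambda_A)$ and $(B,\lambda_B)$ lie in $\cato_\mco(c)$, the local rings $A_{\mathfrak{p}_A}$ and $B_{\mathfrak{p}_B}$ are both regular of the same dimension $c$. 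The surjection $A_{\mathfrak{p}_A} \to B_{\mathfrak{p}_B}$ of regular local rings of equal dimension is therefore an isomorphism; equivalently $\mathfrak{a}_{\mathfrak{p}_A} = 0$. This is the geometric heart of the statement: $\varphi$ is an isomorphism in a neighborhood of the point cut out by $\lambda$.

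First I would establish that $\tfree{\ext^c_\varphi(\mco,\mco)}\colon \tfree{\ext^c_B(\mco,\mco)} \to \tfree{\ext^c_A(\mco,\mco)}$ is an isomorphism. By Lemma~\ref{le:top-class} both sides are free $\mco$-modules of rank one, so it suffices to check the map is surjective, or equivalently that it becomes an isomorphism after localizing at $\mathfrak{p}_A$ (a map of rank-one free $\mco$-modules is an isomorphism iff it is so after $\otimes_\mco K$, i.e.\ after inverting $\varpi$, and localizing at $\mathfrak{p}$ does invert $\varpi$). After localization the map is $\ext^c_{\varphi_{\mathfrak{p}}}(K,K)\colon \ext^c_{B_{\mathfrak{p}_B}}(K,K) \to \ext^c_{A_{\mathfrak{p}_A}}(K,K)$ induced by $A_{\mathfrak{p}_A} \to B_{\mathfrak{p}_B}$, which we just observed is an isomorphism of rings; hence this induced map is an isomorphism. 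Alternatively, and perhaps more cleanly, one can invoke Theorem~\ref{th:serre}: the isomorphism $\tfree{\ext^c_A(\mco,\mco)} \cong \wedge^c_\mco \Hom_\mco(\mathfrak{p}_A/\mathfrak{p}_A^2,\mco)$ is natural, and $\varphi$ induces $\mathfrak{p}_B/\mathfrak{p}_B^2 \to \mathfrak{p}_A/\mathfrak{p}_A^2$ (wait---the variance runs the other way for the normal module), so one reduces to showing $\varphi$ induces an isomorphism on conormal modules $\mathfrak{p}_A/\mathfrak{p}_A^2 \to \mathfrak{p}_B/\mathfrak{p}_B^2$; this follows from $\mathfrak{a}_{\mathfrak{p}_A}=0$ together with the conormal exact sequence $\mathfrak{a}/\mathfrak{a}\mathfrak{p}_A \to \mathfrak{p}_A/\mathfrak{p}_A^2 \to \mathfrak{p}_B/\mathfrak{p}_B^2 \to 0$ and Nakayama. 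Either route works; I would likely present the localization argument as it is the shortest.

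Given that $\tfree{\ext^c_\varphi(\mco,\mco)}$ is an isomorphism, the statement that $\eta_\varphi(N)\colon \eta_{\lambda_B}(N) \to \eta_{\lambda_A}(N)$ is an isomorphism becomes a matter of tracking the commutative square of \eqref{eq:eta-defn}-type maps displayed just before the proposition. By definition $\eta_{\lambda_A}(N)$ is the image of the composite $\ext^c_A(\mco,N)\otimes_\mco \Hom_A(N,\mco) \to \ext^c_A(\mco,\mco) \to \tfree{\ext^c_A(\mco,\mco)}$, and similarly for $B$; the vertical maps $\ext^c_\varphi(\mco,N)$ and $\Hom_\varphi(N,\mco)$ in the first diagram make everything commute. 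The key extra point is that $\Hom_\varphi(N,\mco)\colon \Hom_B(N,\mco) \to \Hom_A(N,\mco)$ is an isomorphism---indeed a bijection, since an $A$-linear map $N \to \mco$ automatically kills $\mathfrak{a}N$ (because $\mathfrak{a}$ acts as zero on $\mco$ via $\lambda_A$, as $\mathfrak{a} \subseteq \mathfrak{p}_A$) and hence factors uniquely through the $B$-module $N$. Therefore the two generating sets of $\eta_{\lambda_B}(N)$ and $\eta_{\lambda_A}(N)$ inside $\tfree{\ext^c_B(\mco,\mco)}$ and $\tfree{\ext^c_A(\mco,\mco)}$ correspond under the isomorphism $\tfree{\ext^c_\varphi(\mco,\mco)}$: every element of the form $\tfree{\ext^c_\varphi(\mco,\mco)}(\text{generator of }\eta_{\lambda_B}(N))$ equals the image of $\ext^c_\varphi(\mco,N)(\zeta)\otimes f$ for the corresponding $f \in \Hom_A(N,\mco)$, so $\tfree{\ext^c_\varphi(\mco,\mco)}$ carries $\eta_{\lambda_B}(N)$ onto $\eta_{\lambda_A}(N)$. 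Being the restriction of an isomorphism of $\mco$-modules to a submodule that maps onto the target submodule, $\eta_\varphi(N)$ is an isomorphism.

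The main obstacle, such as it is, is purely bookkeeping: one must be careful that the vertical map $\ext^c_\varphi(\mco,N)$ need \emph{not} be an isomorphism or even injective (it is only $\Hom_\varphi(N,\mco)$ and $\tfree{\ext^c_\varphi(\mco,\mco)}$ that are isomorphisms), so the argument must be phrased in terms of surjectivity onto the image submodule rather than trying to match the whole diagram isomorphism-by-isomorphism. Once that is recognized, the proof is a diagram chase exactly parallel to \cite[Theorem~7.4]{Iyengar/Khare/Manning:2024a}, with $\tfree{\ext^c_\varphi(\mco,\mco)}$ replacing the isomorphism of target modules used there for congruence modules.
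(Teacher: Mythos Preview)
Your localization argument for the first assertion contains a genuine error: the claim that ``a map of rank-one free $\mco$-modules is an isomorphism iff it is so after $\otimes_\mco K$'' is false. Multiplication by $\varpi$ on $\mco$ is the standard counterexample. Localizing at $\fp$ only shows that $\tfree{\ext^c_\varphi(\mco,\mco)}$ is injective with torsion cokernel---but the cokernel is already a quotient of a rank-one free $\mco$-module, so this tells you nothing. Your alternative route via Theorem~\ref{th:serre} is the correct one (and is what the reference to \cite[Theorem~7.4]{Iyengar/Khare/Manning:2024a} is pointing at): the conormal sequence shows that $\mathfrak{p}_A/\mathfrak{p}_A^2 \to \mathfrak{p}_B/\mathfrak{p}_B^2$ is surjective with $\mco$-torsion kernel, whence $\Hom_\mco(-,\mco)$ of it---the map of \emph{normal} modules $\ext^1_B(\mco,\mco)\to\ext^1_A(\mco,\mco)$---is an isomorphism, and then $\wedge^c$ gives the result. (Your sketch asks for an isomorphism on \emph{conormal} modules, which need not hold; only the torsion-free quotients agree. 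The ``Nakayama'' remark is misplaced.) So drop the localization argument entirely and run the $\ext^1$ argument.

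For the second assertion your diagram chase establishes only that $\tfree{\ext^c_\varphi(\mco,\mco)}$ carries $\eta_{\lambda_B}(N)$ \emph{into} $\eta_{\lambda_A}(N)$; the word ``onto'' is not justified. You correctly flag that $\ext^c_\varphi(\mco,N)$ need not be surjective, but then your argument silently requires exactly that: a generator of $\eta_{\lambda_A}(N)$ is the image of some $\zeta'\otimes f$ with $\zeta'\in\ext^c_A(\mco,N)$, and you have no mechanism to replace $\zeta'$ by something coming from $\ext^c_B(\mco,N)$. One clean fix is to pass to the adjoint description and argue with congruence modules instead: show that the images of $\ext^c_A(\mco,N)$ and $\ext^c_B(\mco,N)$ in $\tfree{\ext^c_A(\mco,N/\fp N)}\cong\tfree{\ext^c_B(\mco,N/\fp N)}$ coincide (this is the content of \cite[Theorem~7.4]{Iyengar/Khare/Manning:2024a}), and then read off the equality of $\eta$'s via Lemma~\ref{le:eta-psi}. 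Your current write-up does not supply this step.
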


\subsubsection*{Notes}
The invariance of domain property is obvious for $c=0$, and used implicitly in Wiles' work~\cite{Wiles:1995}; see Section~\ref{se:NT}, and in particular the discussion around \eqref{eq:goingup}. 

\subsection*{Deformations.}
Fix $(A,\lambda_A)$ in $\cato_\mco(c)$, an element $f$ in $\fp\colonequals \Ker\lambda_A$, and consider the quotient ring $B\colonequals A/(f)$. The augmentation $\lambda_A$ induces an augmentation $\lambda_B\colon B\to \mco$. In the statement below $\fp^{(2)}$ is the second symbolic power of $\fp$.

\begin{lemma}
 The pair $(B,\lambda_B)$ is in $\cato_\mco(c-1)$ if and only if $f$ is not in $\fp^{(2)}$. 
\end{lemma}

\begin{proof}
Since the local ring $A_\fp$ is regular,  the local ring $B_\fp$ is also regular, with $\dim B_\fp = \dim A_\fp -1$, if and only if the image of $f$ in $A_\fp$ is not in $\fp^2A_\fp$; see \cite[Proposition~2.2.4]{Bruns/Herzog:1998a}. The latter condition is equivalent to $f$ not in $\fp^2A_\fp \cap A=\fp^{(2)}$.
\end{proof}

Fix $f\in \fp\setminus \fp^{(2)}$ and $(B,\lambda_B)$ as above, so that $(A,\lambda_A)\to (B,\lambda_B)$ is a surjective map in $\cato_\mco$, with $B$ in $\cato_\mco(c-1)$. Let $M$ be a finitely generated $A$-module on which $f$ is not a zerodivisor and $N\colonequals M/fM$, viewed as a $B$-module.

In what follows, given an element $x$ in a finitely generated $\mco$-module $U$, we write $\mathrm{ord}(x)$ for the \emph{order ideal} of $x$; namely, the ideal  $\{\alpha(x)\mid \alpha \text{ in } \Hom_\mco(U,\mco)\}$ of $\mco$. Note that $\mathrm{ord}(x)$ is $(0)$ if and only if $x$ is torsion, and equal to $\mco$ if and only if the image of $x$ in $\tfree U$ can be extended to a basis of $U$. 

The congruence modules of $M$ and $N$, with respect to $\lambda_A$ and $\lambda_B$, respectively, as related:
\[
\length_{\mco}\cmod_{\lambda_B}(N) = \length_{\mco}\cmod_{\lambda_A}(M) + \mu\cdot \length_{\mco}(\frac{\mco}{\mathrm{ord}([f])})\,,
\]
where $\mu=\rank_{A_\fp}(M_\fp)$ and  $[f]$ denotes the class of the element $f$ in the $\mco$-module $\fp/\fp^2$. This is contained in the proof of \cite[Theorem~8.2]{Iyengar/Khare/Manning:2024a}; see also \cite[Lemma~8.7]{Iyengar/Khare/Manning:2024a}. Here is the analogous statement for congruence ideals. It is a key input in forthcoming applications to the Bloch-Kato conjecture; see Theorem \ref{th:BK}.

\begin{theorem}
\label{th:deformation}
In the situation and notation above, there is a natural exact sequence of $\mco$-modules 
\[
0\longrightarrow \frac{\tfree{\ext^c_A(\mco,\mco)}}{\eta_{\lambda_A}(M)}
    \longrightarrow \frac{\tfree{\ext^{c-1}_B(\mco,\mco)}}{\eta_{\lambda_B}(N)}
        \longrightarrow \frac{\tfree{\ext^{c-1}_B(\mco,\mco)}}{\mathrm{ord}([f])\tfree{\ext^{c-1}_B(\mco,\mco)}}
            \longrightarrow 0\,.
\]
\end{theorem}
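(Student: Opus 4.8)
The plan is to recognize all three terms, and the first map, inside the exterior-algebra description of Theorem~\ref{th:serre}. Write $P\colonequals\Hom_\mco(\fp/\fp^2,\mco)$ and $Q\colonequals\Hom_\mco(\fp_B/\fp_B^2,\mco)$, so that $\tfree{\ext^c_A(\mco,\mco)}\cong\wedge^c_\mco P$ and $\tfree{\ext^{c-1}_B(\mco,\mco)}\cong\wedge^{c-1}_\mco Q$ canonically, each free of rank one over $\mco$. Since $\fp_B=\fp/(f)$ with $f\in\fp$, the kernel of $\fp/\fp^2\twoheadrightarrow\fp_B/\fp_B^2$ is the cyclic submodule $\mco\cdot[f]$, and $f\notin\fp^{(2)}$ says precisely that $[f]$ is not $\mco$-torsion, i.e.\ $\mathrm{ord}([f])\ne 0$; so $\mco\cdot[f]$ is free of rank one, and applying $\Hom_\mco(-,\mco)$ to $0\to\mco\cdot[f]\to\fp/\fp^2\to\fp_B/\fp_B^2\to 0$ yields a short exact sequence $0\to Q\to P\xrightarrow{\mathrm{ev}_{[f]}}\mathrm{ord}([f])\to0$, which splits because $\mathrm{ord}([f])$ is a nonzero ideal of $\mco$. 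A short computation in coordinates adapted to this splitting then shows that contraction with $[f]$ defines an injection $\iota_{[f]}\colon\tfree{\ext^c_A(\mco,\mco)}\hookrightarrow\tfree{\ext^{c-1}_B(\mco,\mco)}$ with image $\mathrm{ord}([f])\cdot\tfree{\ext^{c-1}_B(\mco,\mco)}$; I would take this map---which one checks agrees, up to a unit of $\mco$ and a sign, with the natural change-of-rings map---as the first map of the asserted sequence, so that its cokernel is exactly the third term.

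Granting this, the whole statement reduces to the single identity $\iota_{[f]}\bigl(\eta_{\lambda_A}(M)\bigr)=\eta_{\lambda_B}(N)$. Indeed, by Lemma~\ref{le:top-class} and Lemma~\ref{le:eta-psi} the first two quotients are cyclic, $\mco/(\varpi^{e})$ and $\mco/(\varpi^{e'})$ with $\eta_{\lambda_A}(M)=(\varpi^{e})\tfree{\ext^c_A(\mco,\mco)}$ and $\eta_{\lambda_B}(N)=(\varpi^{e'})\tfree{\ext^{c-1}_B(\mco,\mco)}$, and the third is $\mco/(\varpi^{a})$ with $\mathrm{ord}([f])=(\varpi^a)$; a three-term exact sequence of these cyclic $\mco$-modules with the given maps exists if and only if $e'=e+a$, which is precisely the identity above since $\iota_{[f]}(\eta_{\lambda_A}(M))=(\varpi^{e+a})\tfree{\ext^{c-1}_B(\mco,\mco)}$, and when it holds the displayed maps are well defined, with the first one injective and the stated cokernel, because $\eta_{\lambda_B}(N)\subseteq\mathrm{ord}([f])\tfree{\ext^{c-1}_B(\mco,\mco)}$.

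To prove the identity I would first dispatch the case $M=A$, $N=B$, where $\mu=1$: by Lemma~\ref{le:eta-psi} the two quotients are then $\cmod_{\lambda_A}(A)$ and $\cmod_{\lambda_B}(B)$, so $e'=e+a$ is exactly the congruence-module deformation formula $\length_\mco\cmod_{\lambda_B}(B)=\length_\mco\cmod_{\lambda_A}(A)+\length_\mco(\mco/\mathrm{ord}([f]))$ recalled before the statement. For general $M$ I would then combine the defect formula of Lemma~\ref{le:diff} over $A$ and over $B$, namely $\eta_{\lambda_A}(A)=\ann_\mco(\Coker\kappa_{\lambda_A}(M))\cdot\eta_{\lambda_A}(M)$ and $\eta_{\lambda_B}(B)=\ann_\mco(\Coker\kappa_{\lambda_B}(N))\cdot\eta_{\lambda_B}(N)$, with the case $M=A$ in the form $\iota_{[f]}(\eta_{\lambda_A}(A))=\eta_{\lambda_B}(B)$: applying the $\mco$-linear map $\iota_{[f]}$ to the first identity and comparing with the second, the identity $\iota_{[f]}(\eta_{\lambda_A}(M))=\eta_{\lambda_B}(N)$ follows by cancelling, in the discrete valuation ring $\mco$, the common nonzero factor $\ann_\mco(\Coker\kappa_{\lambda_A}(M))=\ann_\mco(\Coker\kappa_{\lambda_B}(N))$---provided these two annihilators do coincide.

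The main obstacle is this equality of annihilators, and it is where the nonzerodivisor hypothesis is used in earnest. Rees's change-of-rings lemma---available because $f$ is a nonzerodivisor on $M$, and on $A$ (automatic when $M=A$, and true for the Cohen-Macaulay rings $A$ occurring in the intended applications)---supplies natural isomorphisms $\ext^c_A(\mco,M)\cong\ext^{c-1}_B(\mco,N)$ and $\ext^c_A(\mco,A)\cong\ext^{c-1}_B(\mco,B)$; together with the canonical identifications $M/\fp M=N/\fp_BN$ and $\Hom_A(M,\mco)=\Hom_B(N,\mco)$, the naturality of the K\"unneth map~\eqref{eq:kunneth} in the module and in the ring should identify $\kappa_{\lambda_A}(M)$ with $\kappa_{\lambda_B}(N)$, hence their cokernels. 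Making this precise---in particular identifying the object $\mathrm{RHom}_A(B,\mco)\simeq(B\lotimes_A\mco)[-1]$ of the derived category of $B$, which there is in general not formal even though its restriction to the derived category of $A$ is, and checking that Rees's isomorphisms are compatible with all the pairings and with $\iota_{[f]}$ via the fundamental local homomorphism of Theorem~\ref{th:serre}---is the bulk of the work; alternatively one can bypass the reduction to $M=A$ by verifying directly that the pairing maps defining $\eta_{\lambda_A}(M)$ and $\eta_{\lambda_B}(N)$ fit into a square commuting with $\iota_{[f]}$, which isolates the same derived-category input.
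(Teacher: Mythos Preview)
Your proposal is essentially correct and lands on the same commutative diagram as the paper: an isomorphism $\tfree{\ext^c_A(\mco,\mco)}\xrightarrow{\cong}\mathrm{ord}([f])\cdot\tfree{\ext^{c-1}_B(\mco,\mco)}$ carrying $\eta_{\lambda_A}(M)$ onto $\eta_{\lambda_B}(N)$. The difference is one of logical organization. The paper proceeds directly, constructing this diagram by rerunning the argument of \cite[Theorem~8.2]{Iyengar/Khare/Manning:2024a} for congruence ideals rather than congruence modules; this is precisely the route you relegate to your final ``alternatively'' clause. Your primary route instead imports the congruence-module formula for $M=A$ as a black box, then bootstraps to general $M$ via Lemma~\ref{le:diff} and the equality $\ann_\mco(\Coker\kappa_{\lambda_A}(M))=\ann_\mco(\Coker\kappa_{\lambda_B}(N))$.

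Two remarks on your detour. First, invoking the $M=A$ case of the congruence-module formula already presupposes that $f$ is a nonzerodivisor on $A$, which is not part of the stated hypotheses (only $f$ regular on $M$ is assumed); you flag this, but it means your main route proves a slightly weaker statement than the paper claims. Second, the equality of annihilators of the two $\kappa$-cokernels is exactly the same Rees-type compatibility that the direct approach needs, so the detour through Lemma~\ref{le:diff} and the $M=A$ case does not actually buy you anything: you end up doing the same derived-category bookkeeping either way. The paper's direct route is therefore both cleaner and, at least as stated, does not require the extra hypothesis on $A$---the point being that passing to torsion-free quotients amounts to localizing at $\fp$, where $f$ is automatically regular on $A_\fp$ since $A_\fp$ is a regular local ring and $f\notin\fp^{(2)}$.
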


\begin{proof}[Sketch of proof]
Arguing as in the proof of \cite[Theorem~8.2]{Iyengar/Khare/Manning:2024a}, one constructs a commutative diagram 
\[
\begin{tikzcd}
    \eta_{\lambda_A}(M) \arrow[r, hookrightarrow] \arrow[d,"\cong"] & \tfree{\ext^c_A(\mco,\mco)} \arrow[d,"\cong"] & \\
    \eta_{\lambda_B}(N) \arrow[r,hookrightarrow]  & \mathrm{ord}([f])\cdot  \tfree{\ext^{c-1}_B(\mco,\mco)} \arrow[r, hookrightarrow] & \tfree{\ext^{c-1}_B(\mco,\mco)}
\end{tikzcd}
\]  
of $\mco$-linear maps. This gives the desired exact sequence.
\end{proof}

\subsection*{Cotangent modules.}
\label{ss:cotan}
Let $\lambda \colon A\to\mco$ be a map of $\mco$-algebras and set $\fp\colonequals \Ker\lambda$. A local ring is regular if and only if its embedding dimension equals its Krull dimension, so one gets that the pair $(A,\lambda)$ in $\cato_\mco(c)$ if and only if $\rank_\mco(\fp/\fp^2)=\height \fp$. The $\mco$-module $\fp/\fp^2$ is the \emph{cotangent module} of $\lambda$. In what follows the torsion part of the cotagent module also plays a key role:
\[
\Phi_\lambda(A) \colonequals \tors(\fp/\fp^2)\,.
\]
Let $\fitt_c(\fp/\fp^2)$ be the $c$'th Fitting ideal of the $\mco$-module $\fp/\fp^2$; equivalently, the $0$'th Fitting ideal of the $\mco$-module $\Phi_\lambda(A)$. Thus
\[
\fitt_{c}(\fp/\fp^2) = (\varpi^l) \quad\text{where $l\colonequals \length_{\mco} \Phi_\lambda(A)$.}
\]
The result below is the analogue of Theorem~\ref{th:deformation}. It easier to verify; see \cite[Lemma~8.7]{Iyengar/Khare/Manning:2024a}.

\begin{lemma}
    \label{le:deformation-cot}
    \pushQED{\qed}
    Let $(A,\lambda_A)\to (B,\lambda_B)$ be as in Theorem~\ref{th:deformation}, with $B=A/(f)$. One has an equality 
    \[
    \fitt_{0} \Phi_{\lambda_B}(B) = (\varpi^{\ord([f])}) \fitt_{0} \Phi_{\lambda_A}(A) \,. \qedhere
    \]
\end{lemma}

The link between the cotangent module and the congruence ideals is captured in the following result. Here $\mathrm{grade}(\fp,M)$ denotes the length of the longest $M$-regular sequence in $\fp$.

\begin{theorem}
\label{th:fitting-ideal}
Fix $(A,\lambda)$ in $\cato_\mco(c)$ and a finitely generated $A$-module $M$ with $\mathrm{grade}(\fp,M)\ge c$. One has
\[
    \fitt_c(\fp/\fp^2)\cdot\cmod_\lambda(M) =0\,. 
\]  
In particular, $ \fitt_c(\fp/\fp^2)\cdot \tfree{\ext^c_A(\mco,\mco)} \subseteq \eta_\lambda(M)$.
\end{theorem}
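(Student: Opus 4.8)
The plan is to reduce to the case $M=A$ via the defect sequence~\eqref{eq:cmodAM}, and then to establish the statement for $M=A$ — equivalently $\fitt_c(\fp/\fp^2)\subseteq\eta_\lambda(A)$ under the identification $\tfree{\ext^c_A(\mco,\mco)}\cong\mco$ of Lemma~\ref{le:top-class} — by induction on $c$. For the reduction: if $M_\fp=0$ then $M/\fp M$ is $\mco$-torsion, so $\cmod_\lambda(M)=0$ and there is nothing to prove; if $M_\fp\ne0$, then \eqref{eq:cmodAM} exhibits $\cmod_\lambda(M)$ as a quotient of $\cmod_\lambda(A)^{\mu}$, and by Lemmas~\ref{le:top-class} and~\ref{le:eta-psi} one has $\cmod_\lambda(A)\cong\mco/\eta_\lambda(A)$, so any ideal contained in $\eta_\lambda(A)$ annihilates $\cmod_\lambda(A)$, hence also $\cmod_\lambda(M)$. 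Granting $\fitt_c(\fp/\fp^2)\subseteq\eta_\lambda(A)$, this yields the displayed identity; the ``in particular'' then follows from Lemma~\ref{le:eta-psi}, since $\eta_\lambda(M)=(\varpi^{e_1})\tfree{\ext^c_A(\mco,\mco)}$ where $e_1$ is the least exponent in $\cmod_\lambda(M)\cong\bigoplus_i\mco/(\varpi^{e_i})$, while $\fitt_c(\fp/\fp^2)$ annihilates $\cmod_\lambda(M)$ and so lies in $(\varpi^{e_1})$.

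For the base case $c=0$: here $A_\fp$ is a field, so $\fp A_\fp=0$ and hence $\fp/\fp^2$ is $\mco$-torsion, say $\fp/\fp^2\cong\bigoplus_{i=1}^{n}\mco/(\varpi^{a_i})$; thus $\fitt_0(\fp/\fp^2)=(\varpi^{a})$ with $a=\sum_i a_i$, while $\eta_\lambda(A)$ is the image of the evaluation map $\ext^0_A(\mco,A)=A[\fp]\to\mco$, $z\mapsto\lambda(z)$. So it suffices to exhibit $z\in A[\fp]$ with $\lambda(z)=\varpi^{a}$. Choose $g_1,\dots,g_n\in\fp$ lifting generators of $\fp/\fp^2$ adapted to the displayed decomposition; by Nakayama they generate $\fp$, and $\varpi^{a_i}g_i\in\fp^2$ for each $i$. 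Writing $\varpi^{a_i}g_i=\sum_{j}d^{(i)}_jg_j$ with $d^{(i)}_j\in\fp$ produces a relation $D\cdot(g_1,\dots,g_n)^{\mathsf t}=0$ for the square matrix $D=\bigl(\varpi^{a_i}\delta_{ij}-d^{(i)}_j\bigr)$ over $A$; multiplying by the adjugate gives $\det(D)\,g_j=0$ for all $j$, so $z\colonequals\det(D)\in\ann_A(\fp)=A[\fp]$, and $\lambda(z)=\det\bigl(\varpi^{a_i}\delta_{ij}\bigr)=\varpi^{a}$ because each $d^{(i)}_j$ lies in $\fp=\Ker\lambda$. Hence $\varpi^{a}\in\eta_\lambda(A)$.

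For the inductive step, let $(A,\lambda_A)\in\cato_\mco(c)$ with $c\ge1$. Since $\operatorname{grade}(\fp_A,A)=c\ge1$ and $\fp_A\ne\fp_A^{(2)}$, prime avoidance produces $f\in\fp_A\setminus\fp_A^{(2)}$ that is a nonzerodivisor on $A$; set $B\colonequals A/(f)$, so $(B,\lambda_B)\in\cato_\mco(c-1)$. Applying Theorem~\ref{th:deformation} with $M=A$ and $N=B$ (hence $\mu=1$), and identifying $\tfree{\ext^c_A(\mco,\mco)}$ and $\tfree{\ext^{c-1}_B(\mco,\mco)}$ with $\mco$ via Lemma~\ref{le:top-class}, its exact sequence becomes $0\to\mco/\eta_{\lambda_A}(A)\to\mco/\eta_{\lambda_B}(B)\to\mco/\mathrm{ord}([f])\to0$, whence $\eta_{\lambda_B}(B)=\mathrm{ord}([f])\cdot\eta_{\lambda_A}(A)$. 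Separately, $\fp_B/\fp_B^2\cong(\fp_A/\fp_A^2)/\mco[f]$, and since $f\notin\fp_A^{(2)}$ the class $[f]$ is non-torsion, so $0\to\mco\xrightarrow{[f]}\fp_A/\fp_A^2\to\fp_B/\fp_B^2\to0$ is exact; a Smith-normal-form computation of Fitting ideals over the DVR $\mco$ then yields $\fitt_{c-1}(\fp_B/\fp_B^2)=\mathrm{ord}([f])\cdot\fitt_c(\fp_A/\fp_A^2)$. Feeding the inductive hypothesis $\fitt_{c-1}(\fp_B/\fp_B^2)\subseteq\eta_{\lambda_B}(B)$ into these two identities gives $\mathrm{ord}([f])\cdot\fitt_c(\fp_A/\fp_A^2)\subseteq\mathrm{ord}([f])\cdot\eta_{\lambda_A}(A)$, and cancelling the nonzero ideal $\mathrm{ord}([f])$ in the domain $\mco$ yields $\fitt_c(\fp_A/\fp_A^2)\subseteq\eta_{\lambda_A}(A)$.

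The main obstacle is the base case: everything hinges on producing an element of $\ann_A(\fp)$ whose image under $\lambda$ has $\varpi$-valuation exactly $\length_\mco(\fp/\fp^2)$, which is precisely what the adjugate trick accomplishes. A secondary point requiring care is the Fitting-ideal identity $\fitt_{c-1}(\fp_B/\fp_B^2)=\mathrm{ord}([f])\cdot\fitt_c(\fp_A/\fp_A^2)$ — best isolated as a lemma on short exact sequences $0\to\mco\to E\to E'\to0$ of finitely generated $\mco$-modules — since it must match the deformation identity $\eta_{\lambda_B}(B)=\mathrm{ord}([f])\cdot\eta_{\lambda_A}(A)$ exactly for the final cancellation to close the induction.
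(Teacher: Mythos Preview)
There is a genuine gap. Your reduction to $M=A$ commits you to proving $\fitt_c(\fp/\fp^2)\subseteq\eta_\lambda(A)$ for an \emph{arbitrary} $(A,\lambda)\in\cato_\mco(c)$, with no grade hypothesis on $A$ --- but this is precisely the open Question stated immediately after the theorem. Concretely, your inductive step asserts ``$\operatorname{grade}(\fp_A,A)=c$'', and that is false in general: in the paper's own Example following Theorem~\ref{th:defect0}, one has $(A,\lambda)\in\cato_\mco(1)$ with $\operatorname{depth}A=0$, hence $\operatorname{grade}(\fp,A)\le\operatorname{depth}A=0<1=c$, so no $f\in\fp$ is a nonzerodivisor on $A$ and your prime-avoidance step cannot get started. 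The hypothesis $\operatorname{grade}(\fp,M)\ge c$ in the theorem is there for a reason, and your reduction throws it away.

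The repair is to keep $M$ in the induction, exactly as the paper's sketch indicates. For the base case $c=0$, your adjugate trick is fine, but use it for $M$ directly rather than for $A$: the element $z\in A[\fp]$ with $\lambda(z)=\varpi^a$ satisfies $zm\in M[\fp]$ for every $m\in M$, and its image in $M/\fp M$ is $\varpi^a\bar m$, so $\varpi^a$ kills $\cmod_\lambda(M)$. For the inductive step, the hypothesis $\operatorname{grade}(\fp,M)\ge c\ge1$ (together with $\fp\ne\fp^{(2)}$) lets prime avoidance produce $f\in\fp\setminus\fp^{(2)}$ that is a nonzerodivisor on $M$; set $N=M/fM$ and $B=A/(f)$, so $\operatorname{grade}(\fp_B,N)\ge c-1$. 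Your Fitting-ideal identity $\fitt_{c-1}(\fp_B/\fp_B^2)=\mathrm{ord}([f])\cdot\fitt_c(\fp_A/\fp_A^2)$ is correct and still does the work; pair it with the $\cmod$-analogue of Theorem~\ref{th:deformation} (the displayed length formula just before it, or rather the underlying structural statement from \cite[Theorem~8.2]{Iyengar/Khare/Manning:2024a}) to descend the annihilation statement for $\cmod_\lambda$ itself, not just the containment for $\eta_\lambda$.
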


\begin{proof}[Sketch of proof]
 Using Theorem~\ref{th:deformation} and Lemma~\ref{le:deformation-cot} one can prove this result by an induction on $c$, as in the proof of \cite[Theorem~8.2]{Iyengar/Khare/Manning:2024a}. The base case $c=0$ follows from the fact that  $0$'th Fitting ideal of a module is contained in the annihilator.
\end{proof}

Under the stronger hypotheses that $A$ is Cohen-Macaulay and flat over $\mco$, one can build on ideas in \cite{Wang:1994,Iyengar/Takahashi:2021} and give a different proof of the result above by  exploiting the link between the Fitting ideal in question and the Jacobian ideal of the $\mco$-algebra $A$; see \cite{Kunz:1986}. Here is a natural question that arises in this context.

       \begin{question}
Does the containment $ \fitt_c(\fp/\fp^2)\subseteq \eta_\lambda(A)$ always hold?
       \end{question}

This hold when $A$ is Cohen-Macaulay, by the preceding theorem. Given the results in \cite{Iyengar/Takahashi:2021}, it seem plausible that statement holds under weaker hypotheses, but we suspect that the inclusion will not hold in general.

\subsection*{Numerical criteria.}
The result below builds on Theorem~\ref{th:splitting}. It is the key to applications of congruence modules, and ideals, to proving $R=\mbb T$ theorems. For illustrations see \cite{Iyengar/Khare/Manning:2024a, Diamond/Iyengar/Khare/Manning:2026a}.

\begin{theorem}
\label{th:defect0}
    Fix $(A,\lambda)$ in $\cato_\mco(c)$ and a finitely generated $A$-module $M$ with $\depth_AM\ge c+1$. Set $\mu\colonequals \rank_{A_\fp}(M_\fp)$. The following conditions are equivalent:
    \begin{enumerate}[\quad\rm(1)]
        \item 
        $A$ is complete intersection and $M\cong A^\mu\oplus W$, with $W_\fp=0$.
        \item
        $\fitt_c(\fp/\fp^2)\cdot\tfree{\ext^c_A(\mco,\mco)} =\eta_\lambda(M)$;
        \item 
        $\mu\cdot \length_\mco\Phi_\lambda(A)=\length_{\mco} \cmod_\lambda(M)$.
    \end{enumerate}
\end{theorem}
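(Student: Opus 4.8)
The plan is to prove the cycle of implications $(1)\Rightarrow(2)\Rightarrow(3)\Rightarrow(1)$, using the structural results assembled above — principally Theorem~\ref{th:splitting}, Theorem~\ref{th:fitting-ideal}, Lemma~\ref{le:eta-psi}, and the isomorphism $\tfree{\ext^c_A(\mco,\mco)}\cong\mco$ of Lemma~\ref{le:top-class} — together with the classical fact (Lescot, via the $c=0$ version of the theory, or \cite[Theorem~2.2]{Iyengar/Khare/Manning/Urban:2024}) that for $(A,\lambda)$ in $\cato_\mco$ the ring $A$ is complete intersection if and only if $\fitt_c(\fp/\fp^2)$ annihilates $\cmod_\lambda(A)$ with equality of lengths, i.e.\ $\length_\mco\Phi_\lambda(A)=\length_\mco\cmod_\lambda(A)$. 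Throughout I identify $\tfree{\ext^c_A(\mco,\mco)}$ with $\mco$ via Lemma~\ref{le:top-class}, so that $\eta_\lambda(M)$ becomes an honest ideal $(\varpi^{e})$ of $\mco$ and $\fitt_c(\fp/\fp^2)=(\varpi^{t})$ where $t=\length_\mco\Phi_\lambda(A)$.

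First, $(1)\Rightarrow(2)$. Suppose $A$ is complete intersection and $M\cong A^\mu\oplus W$ with $W_\fp=0$. Since $A$ is in particular Gorenstein and $M$ is maximal Cohen–Macaulay (the depth hypothesis plus the complete intersection hypothesis on $A$ force $\depth_A M=\dim A$, as $\dim A=c+\dim\mco=c+1$), Theorem~\ref{th:splitting} gives $\eta_\lambda(M)=\eta_\lambda(A)$; alternatively one sees this directly from Lemma~\ref{le:add-eta} since $\eta_\lambda(W)=0$ because $W_\fp=0$. So it suffices to show $\eta_\lambda(A)=\fitt_c(\fp/\fp^2)\cdot\tfree{\ext^c_A(\mco,\mco)}$ when $A$ is complete intersection. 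The inclusion $\supseteq$ is exactly the last sentence of Theorem~\ref{th:fitting-ideal} applied to $M=A$. For the reverse inclusion, combine Lemma~\ref{le:eta-psi} (with $\mu=1$, giving $\cmod_\lambda(A)\cong\tfree{\ext^c_A(\mco,\mco)}/\eta_\lambda(A)$, hence $\length_\mco\cmod_\lambda(A)=e$) with the complete intersection criterion $\length_\mco\cmod_\lambda(A)=\length_\mco\Phi_\lambda(A)=t$; this yields $e=t$, i.e.\ equality of the two ideals $(\varpi^e)=(\varpi^t)$.

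Next, $(2)\Leftrightarrow(3)$ and $(2)\Rightarrow(1)$. The defect formula, Lemma~\ref{le:diff} applied with this $M$, reads $\eta_\lambda(A)=\ann_\mco(\Coker\kappa_\lambda(M))\cdot\eta_\lambda(M)$; combined with the inclusion $\fitt_c(\fp/\fp^2)\cdot\tfree{\ext^c}\subseteq\eta_\lambda(M)\subseteq\tfree{\ext^c}$ of Theorem~\ref{th:fitting-ideal} and the analogous inclusion $\fitt_c(\fp/\fp^2)\cdot\tfree{\ext^c}\subseteq\eta_\lambda(A)$, condition (2) becomes the assertion that the chain of ideals $\fitt_c(\fp/\fp^2)\tfree{\ext^c}\subseteq\eta_\lambda(A)\subseteq\eta_\lambda(M)$ collapses at the top, i.e.\ $\eta_\lambda(M)=\fitt_c(\fp/\fp^2)\tfree{\ext^c}$. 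Taking lengths and using Lemma~\ref{le:eta-psi} together with the exact sequence \eqref{eq:cmodAM} (or the congruence-ideal version, Lemma~\ref{le:diff}) one translates lengths of these quotients into $\mu\cdot\length_\mco\cmod_\lambda(A)$, $\length_\mco\cmod_\lambda(M)$, and $\mu\cdot t=\mu\cdot\length_\mco\Phi_\lambda(A)$; a short bookkeeping argument then shows (2) is equivalent to the length equality in (3) \emph{together with} the complete intersection length equality $\length_\mco\cmod_\lambda(A)=\length_\mco\Phi_\lambda(A)$. The point is that (2), being an equality of ideals at the \emph{bottom} of the chain $\fitt_c\tfree{\ext^c}\subseteq\eta_\lambda(A)\subseteq\eta_\lambda(M)$, forces \emph{both} inclusions to be equalities; the first equality is the complete intersection criterion (giving $A$ complete intersection, hence Gorenstein), and then the second equality $\eta_\lambda(M)=\eta_\lambda(A)$ lets us invoke Theorem~\ref{th:splitting} to conclude $M\cong A^\mu\oplus W$ with $W_\fp=0$ — this is exactly (1). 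Conversely (3) alone only gives the length of the full chain, so to recover (1) from (3) one argues: $\length_\mco\cmod_\lambda(M)\ge\mu\cdot\length_\mco\cmod_\lambda(A)-\length_\mco\Coker\kappa_\lambda(M)$ by \eqref{eq:cmodAM}, and $\length_\mco\cmod_\lambda(A)\ge\length_\mco\Phi_\lambda(A)$ always (by Theorem~\ref{th:fitting-ideal}, $\fitt_c$ annihilates $\cmod_\lambda(A)$, and the $0$th Fitting ideal is contained in the annihilator — giving $\length\Phi_\lambda(A)\le\length\cmod_\lambda(A)$), so (3) forces both of these to be equalities, which is $A$ complete intersection plus $\Coker\kappa_\lambda(M)=0$, and the latter is again precisely the splitting of $M$ by \cite[Theorem~9.2]{Iyengar/Khare/Manning:2024a}/Theorem~\ref{th:splitting}.

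The main obstacle I anticipate is not any single deep input — all the heavy lifting is in Theorem~\ref{th:splitting}, Theorem~\ref{th:fitting-ideal}, and the complete intersection criterion — but rather the careful length bookkeeping needed to see that conditions (2) and (3) are each equivalent to the \emph{conjunction} ``$A$ complete intersection'' and ``$M$ splits off a free module of the right rank,'' rather than to one or the other separately. Concretely, one must verify that the two a priori inequalities $\length_\mco\cmod_\lambda(A)\ge\length_\mco\Phi_\lambda(A)$ and $\length_\mco\cmod_\lambda(M)\ge\mu\cdot\length_\mco\cmod_\lambda(A)-\length_\mco(\text{defect})$ are the \emph{only} sources of slack, so that an equality of totals forces equality termwise; this requires knowing that $\length_\mco\Coker\kappa_\lambda(M)$ and $\length_\mco\cmod_\lambda(A)/\langle\text{c.i.\ part}\rangle$ are independently non-negative and vanish exactly under the hypotheses of (1). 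I would organize this as a single lemma computing $\length_\mco(\tfree{\ext^c}/\eta_\lambda(M))$ in terms of $\mu$, $\length_\mco\Phi_\lambda(A)$, and a ``complete intersection defect'' plus a ``freeness defect'' of $M$, after which all three equivalences fall out by comparing this one formula against $\fitt_c$, against $\length_\mco\cmod_\lambda(M)$, and against $\mu\length_\mco\Phi_\lambda(A)$.
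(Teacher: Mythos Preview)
Your cycle breaks down at the steps $(2)\Rightarrow(1)$ and $(3)\Rightarrow(1)$, because both route through properties of $\cmod_\lambda(A)$ and $\eta_\lambda(A)$ that are not available without depth hypotheses on $A$ itself. Concretely: the inclusion $\fitt_c(\fp/\fp^2)\subseteq\eta_\lambda(A)$ you use to collapse the chain $\fitt_c\subseteq\eta_\lambda(A)\subseteq\eta_\lambda(M)$ is exactly the open Question following Theorem~\ref{th:fitting-ideal}; Theorem~\ref{th:fitting-ideal} only gives it when $\mathrm{grade}(\fp,A)\ge c$, which is not assumed. Likewise, the ``complete intersection criterion'' $\length_\mco\cmod_\lambda(A)=\length_\mco\Phi_\lambda(A)\Leftrightarrow A$ c.i.\ is the $M=A$ case of $(1)\Leftrightarrow(3)$, and so itself requires $\depth A\ge c+1$; the example immediately after the theorem shows this can fail (there $\eta_\lambda(A)=\fitt_1(\fp/\fp^2)$ yet $A$ is not c.i.). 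You also have the key inequality reversed: $\fitt_c$ annihilating $\cmod_\lambda(A)$ gives $\length_\mco\cmod_\lambda(A)\le\length_\mco\Phi_\lambda(A)$, not $\ge$; with your stated direction the squeeze argument in $(3)\Rightarrow(1)$ yields only the vacuous $\length\Coker\kappa_\lambda(M)\ge 0$. (Your citation for the c.i.\ criterion is also off: Lescot's theorem and \cite[Theorem~2.2]{Iyengar/Khare/Manning/Urban:2024} concern regularity, not complete intersections.)

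The paper avoids all of this by never touching $\cmod_\lambda(A)$ in the proof of $(2)\Leftrightarrow(3)$. It applies Theorem~\ref{th:fitting-ideal} only to $M$ (legitimate, since $\depth_AM\ge c+1$ forces $\mathrm{grade}(\fp,M)\ge c$), refining \eqref{eq:cmod-structure} to $\cmod_\lambda(M)\cong\bigoplus_{i=1}^\mu\mco/(\varpi^{e_i})$ with $l=e_1\le\cdots\le e_\mu\le f$, where $l$ is the exponent of $\eta_\lambda(M)$ (Lemma~\ref{le:eta-psi}) and $f=\length_\mco\Phi_\lambda(A)$. Then $(2)$ is $l=f$ and $(3)$ is $\sum e_i=\mu f$; the sandwich makes these equivalent by inspection. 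The equivalence $(1)\Leftrightarrow(3)$ is then taken wholesale from \cite[Theorem~9.2]{Iyengar/Khare/Manning:2024a}, whose proof does the real work of deducing the c.i.\ property of $A$ from the numerical equality without assuming depth on $A$.
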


\begin{proof}[Sketch of proof]
    The equivalence (1)$\Leftrightarrow$(3) is the content of \cite[Theorem~9.2]{Iyengar/Khare/Manning:2024a}. 

    (2)$\Leftrightarrow$(3) This follows  using Theorem~\ref{th:fitting-ideal} and   Lemma \ref{le:eta-psi}.
    \end{proof}

The hypotheses on the $\depth_AM$ cannot be relaxed, even when $M=A$; here is an example.

\begin{example}
Consider the $\mco$-algebra
\[
A\colonequals \frac{\mco\pos{x,y}}{(x(x-\varpi),x\varpi^2,xy)}\,.
\]
One has $\dim A=2$ whereas $\depth A=0$. For the augmentation $\lambda\colon A\to \mco$ that assigns $x,y$ to $0$, it is easy to verify that $(A,\lambda)$ is in $\cato_\mco(1)$ and that $\eta_\lambda(A)=(\varpi)=\fitt_1(\fp/\fp^2)$. However $A$ is not a complete intersection.
\end{example}

The result below is a simple consequence of Theorem~\ref{th:defect0}, applied to $M=B$, and the invariance of domain property of congruence ideals.

\begin{corollary}
\label{co:iso-criterion-cotangent}
Let $\alpha\colon (A,\lambda_A) \to (B,\lambda_B)$ be a surjective map in $\cato_{\mco}(c)$ with $B$ a complete intersection. If $\fitt_{\mco}\Phi_{\lambda_A}(A)=\fitt_{\mco}\Phi_{\lambda_B}(B)$, then $\alpha$ is bijective and $A$ is a complete intersection. \qed
\end{corollary}

Here is an analogue of the preceding result that involves the congruence ideals of $A$ and $B$. It follows from Theorem~\ref{th:splitting}. See Theorem~\ref{th:gorsum} for a related statement.

\begin{corollary}
\label{co:iso-criterion}
    Let $\alpha \colon (A,\lambda_A) \to (B,\lambda_B)$ be a surjective map in $\cato_{\mco}(c)$ with $A$ Gorenstein and $B$ Cohen-Macaulay. If $\eta_{\lambda_A}(A)=\eta_{\lambda_B}(B)$, then $\alpha$ is bijective. \qed
\end{corollary}

The preceding corollaries are inspired by Wiles' work in \cite[Appendix]{Wiles:1995} that concerns the case $c=0$.

\subsection*{Wiles defect}
Part (3) of Theorem~\ref{th:defect0} can be rephrased as the condition that the invariant
\begin{equation}
\label{eq:wiles-defect}
\delta_{\lambda_A}(M)\colonequals \mu\cdot \length_{\mco}\Phi_{\lambda_A}(A) - \length_{\mco}\cmod_{\lambda_A}(M)\,,
\end{equation}
introduced in \cite{Iyengar/Khare/Manning:2024a} as the \emph{Wiles defect} of $M$ at $\lambda_A$, equals $0$. In fact, as is done in \cite{Iyengar/Khare/Manning:2024a},  many results about the behavior of congruence modules encountered above can be expressed in terms of this invariant. In \cite{Bockle/Khare/Manning:2024} the Wiles  defect $\delta_{\lambda_A}(A)$ is  defined when one further assumes that  $A$ is  Cohen-Macaulay. Under either of these definitions the Wiles defect remains the same on going modulo regular sequences, and coincides for $c=0$, so one sees that $\delta_{\lambda_A}(M)$ of   \cite{Iyengar/Khare/Manning:2024a} coincides with the defect defined in \cite{Bockle/Khare/Manning:2024}, under its more restrictive hypothesis that  $A$ is Cohen-Macaulay and $M=A$.

\subsubsection*{Notes}
Theorem~\ref{th:defect0} generalizes the numerical criterion of Wiles, Lenstra and Diamond; see ~\cite{Wiles:1995, Lenstra:1995, Diamond:1997}. In the context of Corollary~\ref{co:iso-criterion-cotangent}, when $A$ and $B$ are essentially of finite type, the hypotheses on the cotangent modules is equivalent to the condition that the natural map $\Omega_{A/\mco}\otimes_AB\to \Omega_{B/\mco}$ on the modules of K\"ahler differentials over $\mco$ is an isomorphism; Eisenbud and Mazur~\cite{Eisenbud/Mazur:1997} call such a map $\alpha\colon A\to B$ an \emph{evolution} of $B$ over $\mco$. Allowing this language also for algebras in $\cato_\mco$, Corollary~\ref{co:iso-criterion-cotangent} is the statement that complete intersection rings admit no non-trivial evolutions. In their work, which was also inspired by that of Wiles, Eisenbud and Mazur identify other classes of rings that admit no non-trivial evolution.

\section{Calculus of congruence ideals}
\label{se:tool-kit}

In this section we gather some results to help compute congruence modules and congruence ideals. These are guided by, and used in, Section~\ref{se:BKM-revisited}, where we compute congruence ideals of  local deformation rings.

In this section we consider a map of rings $\alpha \colon (A,\lambda_A) \to (B,\lambda_B)$ in $\cato_\mco(c)$ such that following conditions hold:
\begin{itemize}
    \item 
    $\alpha$ is finite, that is to say, $B$ is finitely generated as an $A$-module;
    \item
     $\alpha$ is an isomorphism at $\lambda_A$, meaning that the natural map $\alpha_{\fp_A}\colon A_{\fp_A}\to B_{\fp_A}$ is an isomorphism; equivalently, the $A$-module $B/A$ is not supported at $\fp_A$;
\end{itemize}
Observe that both conditions hold when $\alpha$ is surjective.

Here are some observations that flow immediately from the assumptions on $\alpha$. 

\begin{lemma}
\label{le:finite-isotops}
   In the setup above, for any finitely generated $B$-module $M$ the canonical map of $\mco$-modules
   \[
   \tfree{(M/\fp_AM)} \longrightarrow \tfree{(M/\fp_BM)}
   \]
   is an isomorphism, and hence so is the map $\Hom_B(M,\mco)\to \Hom_A(M,\mco)$.
\end{lemma} 

\begin{proof}
Since one has a natural surjective map 
\[
M/\fp_AM \cong \mco\otimes_AM  \longrightarrow \mco\otimes_BM\cong M/\fp_BM\,,
\]
it suffices to verify the induced map
\[
E\otimes_\mco(\mco\otimes_AM) \longrightarrow E\otimes_\mco(\mco\otimes_BM)
\]
is an isomorphism. This holds because
\begin{gather*}
E\otimes_\mco(\mco\otimes_AM) \cong E\otimes_A M\cong E\otimes_{A_{\fp_A}}M_{\fp_A} \\
E\otimes_\mco(\mco\otimes_BM) \cong E\otimes_B M\cong E\otimes_{B_{\fp_A}}M_{\fp_A}
\end{gather*}
and the map $A_{\fp_A}\to B_{\fp_A}$ is an isomorphism.
\end{proof}

Consider the map induced by $\alpha$ on cotangent modules
\[
\Phi_{\alpha}\colon \fp_A/\fp_A^2 \longrightarrow \fp_B/\fp_B^2\,.
\]
Both these have rank $c$, since $A$ and $B$ are in $\cato_\mco(c)$, and the cokernel is torsion, since $\alpha$ is an isomorphism at $\lambda_A$. Thus the dual map
\begin{equation}
\label{eq:finite-tangent-map}
\Phi_{\alpha}^*\colon \Hom_{\mco}(\fp_A/\fp_A^2,\mco)\longrightarrow \Hom_{\mco}(\fp_B/\fp_B^2,\mco)
\end{equation}
is one-to-one, and of the same rank. In particular, the cokernel of the map $\wedge^c(\Phi_{\alpha}^*)$ has finite length.
This observation comes into play in the statement of the result below.  It is a generalization of the invariance of domain property of congruence modules, Proposition~\ref{pr:invariance-of-domain}, for when $\alpha$ is surjective $\Phi_{\alpha}^*$ is an isomorphism, and hence the congruence modules of $M$ over $B$ and over $A$, respectively, are isomorphic.

\begin{proposition}
\label{pr:finite-module}
With setup as above, let $M$ be a finitely generated $B$-module with $\depth_BM\ge c$. Then one has an exact sequence of $\mco$-modules
\[
    0\longrightarrow \cmod_{\lambda_B}(M) \longrightarrow \cmod_{\lambda_A}(M) \longrightarrow \coker(\wedge^c \Phi_{\alpha}^*)^\mu\longrightarrow 0
    \]
    where $\mu$ is the rank of $M$ at $\lambda_B$.
\end{proposition}

\begin{remark}
\label{re:finite-invariance}
By Theorem~\ref{th:serre} one can identify the $c$'th exterior power of the tangent module of $\lambda_A$ and $\lambda_B$ with   the torsion-free quotients of the $\ext^c_A(\mco,\mco)$ and $\ext^c_B(\mco,\mco)$, respectively, and hence $\wedge^c\Phi_{\alpha}^*$ with the map
 \[
\tfree{\ext^c_{\alpha}(\mco,\mco)} \colon \tfree{\ext^c_A(\mco,\mco)}\longrightarrow \tfree{\ext^c_B(\mco,\mco)}
\]
This observation is used the proof below.
\end{remark}

\begin{proof}
One has a commutative diagram of $\mco$-modules
 \[
    \begin{tikzcd}
        \ext^c_B(\mco,M)  \arrow[d,hookrightarrow] \arrow[r] 
                & \ext^c_A(\mco,M) \arrow[d,hookrightarrow] \arrow[dr,hookrightarrow] \\
        \tfree{\ext^c_B(\mco, M/\fp_BM)}  \arrow[r,hookrightarrow] 
                & \tfree{\ext^c_A(\mco,M/\fp_BM)}
                    &\tfree{\ext^c_A(\mco,M/\fp_AM)} \arrow[l,"\cong"]
    \end{tikzcd}
    \]
The isomorphism follows from Lemma~\ref{le:finite-isotops}. Given Remark~\ref{re:finite-invariance},  the theorem follows once we verify that the map in the top row is an isomorphism, and this is verified exactly as in the proof of ~\cite[Theorem~7.4]{Iyengar/Khare/Manning:2024a}.
\end{proof}

When the map $\alpha$ is also surjective one has the following result about change of congruence ideals; observe that it implies Corollary~\ref{co:iso-criterion}.

\begin{theorem}
    \label{th:gorsum}
Let $\alpha\colon (A,\lambda_A)\to (B,\lambda_B)$ be a surjective map in $\cato_{\mco}(c)$, with $A$ a Gorenstein ring and $B$ Cohen-Macaulay. With $J \colonequals \lambda_A(A[\Ker\alpha])$ one has 
\[
\eta_{\lambda_A}(A) = J \eta_{\lambda_B}(B)\,.
\]
\end{theorem}

The conclusion of the preceding result is equivalent to:
\[
\length_{\mco} {\cmod_{\lambda_A}(A)} = \length_{\mco}{\cmod_{\lambda_B}(B)} + \length_{\mco}{( \mco/J)}\,.
\]
We formulated it in terms congruence ideals, given the focus of this manuscript.

\begin{proof}
Set $I\colonequals \Ker\alpha$ and $\fp\colonequals \Ker\lambda_A$. Since $A$ and $B$ are Cohen-Macaulay rings of dimension $c$ and $\alpha$ is surjective map, $B$ is maximal Cohen-Macaulay as an $A$-module. Moreover, since $\alpha_\fp$ is an isomorphism one has $I_\fp=0$. Hence the $\mco$-module $\ext^c_A(\mco,I)$ is torsion. Keeping in mind that $\ext^i_A(\mco,-) =0$ for $i\le c-1$ on maximal Cohen-Macaulay modules, the  exact sequence of $A$-modules 
\[
0\longrightarrow I\longrightarrow A\longrightarrow B\longrightarrow 0\tag{$\ast$}
\]
induces the exact sequence in the top row of the diagram below of $\mco$-modules
 \[
 \begin{tikzcd}
0 \arrow[r] & \ext^c_A(\mco,A) \arrow[r] \arrow[d]
            & \ext^c_A(\mco,B) \arrow[r] \arrow[d]
            & \ext^{c+1}_A(\mco,I) \arrow[r] & 0 \\
            &\tfree{\ext^c_A(\mco,A/\fp)}\arrow[r,"\cong"] 
            & \tfree{\ext^c_A(\mco,B/\fp B)}
\end{tikzcd}
 \]
 We already know the vertical maps are one-to-one and that the cokernels are $\cmod_{\lambda_A}(A)$ and $\cmod_{\lambda_A}(B)$, respectively. Thus we deduce the second equality below
 \begin{align*}
 \length_\mco \cmod_{\lambda_A}(A)  - \length_\mco \cmod_{\lambda_B}(B) 
    &= \length_\mco \cmod_{\lambda_A}(A)  - \length_\mco \cmod_{\lambda_A}(B) \\
    &= \length_\mco \ext^{c+1}_A(\mco, I)\,.
 \end{align*}
 The first equality is by the invariance of domain property, Proposition~\ref{pr:invariance-of-domain}. It thus remains to verify that the length of $\ext^{c+1}_A(\mco,I)$ equals that of $\mco/J$.

This is where the Gorenstein property of $A$ comes in. It is convenient to phrase the argument in terms of the stable derived category; see, for instance, \cite[\S7.5]{Buchweitz:2021a}. Writing $\sHom_A(-,-)$ for morphisms in the stable category, one has isomorphisms
 \[
 \ext^{c+1}_A(\mco,I) \cong \ext^1_A(\Omega^c \mco, I) \cong \sHom_A(\Omega^c\mco,\Omega^{-1}I)
 \]
where the first one is standard dimension-shifting, and the second one holds because the $A$-module $\Omega^c\mco$ is maximal Cohen-Macaulay. Now we use Auslander duality~\cite{Buchweitz:2021a}: This gives the first isomorphism below
\begin{align*}
\sHom_A(\Omega^c\mco,\Omega^{-1} I)^\vee 
    & \cong \sHom_A(\Omega^{-1} I,\Omega^{1-(c+1)}\Omega^c\mco)   \\
    &\cong \sHom_A(\Omega^{-1} I,\Omega^{-c}\Omega^c \mco) \\
    &\cong \sHom_A(\Omega^{-1} I, \mco) \\
    &\cong \sHom_A(B,\mco)\\
    &\cong \coker (\Hom_A(B,A)\to \Hom_A(B,\mco)\cong \mco)\\
    &\cong \mco/J
\end{align*}
Here $(-)^\vee$ is the dual with respect to $E/\mco$, where $E$ is the field of fractions of $\mco$. The third isomorphism holds because $I$ is maximal Cohen-Macaulay; this follows from the exact sequence ($\ast$). This sequence also yields that $\Omega^{-1}I\cong B$ in the stable derived category of $A$. This completes the proof.
\end{proof}

\begin{corollary}
\label{co:fiber-product}
Let $A$ be a Gorenstein ring and $I,J$ ideals in $A$ such that the following conditions hold:
\begin{enumerate}[\quad\rm(1)]
    \item 
    $V(I)$ and $V(J)$ are unions of components of $\Spec A$;
    \item 
    $I\cap J=0$ and $\mathrm{height}(I+J)\ge 1$;
    \item 
    $A/J$ has no embedded associated primes;
    \item
    $A/I$ is Cohen-Macaulay.
\end{enumerate}
Let $\alpha\colon A\to A/I$ be the canonical surjection. For each smooth $\mco$-point $\lambda\colon A/I\to \mco$ there is an equality $\eta_{\lambda\alpha}(A) =  \lambda\alpha(J) \cdot \eta_{\lambda}(A/I)$.
\end{corollary}

\begin{proof}
 Given Theorem~\ref{th:gorsum} we only need to verify that 
 \[
 \lambda\alpha(A[I]) = \lambda\alpha(J)\,.
 \]
 Consider the Mayer-Vietoris exact sequence of $A$-modules
 \[
 0\longrightarrow A \longrightarrow \frac AI \oplus \frac AJ \longrightarrow \frac A{I+J}\longrightarrow 0\,.
 \]
 Applying $\Hom_A(A/I,-)$ and keeping in mind that $\ext^i_A(A/I,A)=0$ for $i\ge 1$ because $A$ is Gorenstein and $A/I$ is maximal Cohen-Macaulay, one gets an exact sequence of $C$-modules
 \[
 0\longrightarrow A[I] \longrightarrow \frac AI \oplus \Hom_A(A/I,A/J) \longrightarrow \frac A{I+J}\longrightarrow 0\,.
 \]
 The associated primes of $\Hom_A(A/I,A/J)$ are the associated primes of $A/J$ contained in $V(I)$. Thus
 conditions (1)---(3) imply $\Hom_A(A/I,A/J)=0$, and so we deduce from the exact sequence above that
 \[
 I + A[I] = I + J\,.
 \]
Since $\lambda(I)=0$, this gives the desired equality.
\end{proof}

We record the following observation, connected with Theorem~\ref{th:gorsum}, for later use.

\begin{lemma}
Let $\alpha\colon A\to B$ be a surjective map of local rings in $\cato_\mco(c)$ with $A$ Gorenstein and $B$ Cohen-Macaulay. Then $B$ is Gorenstein if and only if the ideal $A[\Ker\alpha]$ of $B$ is principal.
\end{lemma}

\begin{proof}
Since $A$ is Gorenstein and  $\dim A=\dim B$ the $B$-module $\Hom_A(B,A)$, that is naturally identified with $A[\Ker\alpha]$, is a dualizing module for $B$. The result follows from this observation, since a Cohen-Macaulay local ring is Gorenstein if and only if its dualizing module is free, and then necessarily of rank one.
\end{proof}

\section{Determinantal rings}
\label{se:determinantal-rings}
In this section we calculate congruence ideals for certain generic determinantal rings. These are used in the next section to calculate congruence ideals of Steinberg deformation rings. The advantage of starting with the determinantal case is that we can exploit the various symmetries of the situation to simplify the calculations. 

Throughout $(\mco,\varpi,k)$ is a complete discrete valuation ring as usual.

\subsection*{Analytic $\mco$-algebras}
In the remainder of this manuscript, we only consider rings in the category $\cato_{\mco}$ that are also analytic $\mco$-algebras, that is to say, rings of the form
\[
A=\frac{\mco\pos{x_1,\dots,x_n}}{(f_1,\dots,f_m)}
\]
where $\mco\pos{x_1,\dots,x_n}$ is the ring of power series in indeterminates $\bos{x}\colonequals x_1,\dots,x_n$.
An augmentation $\lambda\colon A\to \mco$ of $\mco$-algebras is determined its value on $\bos{x}$, that is to say, by elements $\bos{a}\colonequals a_1,\dots,a_n$ in $(\varpi)$, where $a_i\colonequals \lambda(x_i)$. The cotangent module of the map $\lambda$  has a presentation
\[
\mco^m \xrightarrow{\ \left.\left(\frac{\partial f_j}{\partial {x_i}}\right)\right|_{\bos{a}}\ }\mco^n \longrightarrow \frac{\fp}{\fp^2}\longrightarrow 0\,.
\]
where $(\frac{\partial f_j}{\partial {x_i}})$ is the Jacobian matrix associated to $\bos{f}$. In particular, the Fitting ideals of $\fp/\fp^2$ are the Fitting ideals of the $A$-module $\hat{\Omega}_{A/\mco}$ of continuous K\"ahler differentials of $A$ as an $\mco$-algebra, extended to $\mco$ along $\lambda$, that is to say
\[
\fitt_{i}(\fp/\fp^2) = \lambda_{\bos{a}}(\fitt_i(\hat{\Omega}_{A/\mco}))\,.
\]
Since the $\bos{f}$ are polynomials,  we can even replace $\hat{\Omega}_{A/\mco}$ by $\Omega_{A'/\mco}$, where $A'$ is $\mco[\bos{x}]$, the polynomial ring over $\bos{x}$, modulo the ideal $(\bos{f})$. 

\subsection*{Determinantal rings}
 In the next few paragraphs we compute the congruence ideals and torsion in the cotangent modules of smooth $\mco$-valued points on the spectrum of the determinantal ring defined by size $2$ minors of a generic matrix of size $2\times n$. For applications to number theory presented later in this section we only require the case $n=4$, and most of the calculations can be checked in a computer algebra system, such as Magma,  and we have done just that. We tackle the general case, in the anticipation that these results will prove useful later on.

Let $n\ge 2$ be a positive integer and consider a matrix of size  $2\times n$ consisting of indeterminates:
\[
X\colonequals
\begin{bmatrix}
    x_{11} & x_{12} & \dots & x_{1n}\\
    x_{21} & x_{22} & \dots & x_{2n}
\end{bmatrix}
\]
Let $P\colonequals \mco\pos{X}$ be the  ring of formal power series over $\mco$ in these indeterminates and set
\[
A\colonequals P/I_2(X) 
\]
where $I_2(X)$ is the ideal generated by minors of size $2$ in $X$. 

The ring $A$ can be obtained from ring $\mco[X]/I_2(X)$, where $\mco[X]$ denotes the polynomial ring in the indeterminates, by completing it at the ideal $(X)$. This change of rings is flat, and moreover the ideal $I_2(X)$ is homogeneous, for the standard grading on $\mco[X]$. This means that most properties of the ideal  $I_2(X)$, or the ring $A$, can be verified by working in $\mco[X]$, for the corresponding quotient ring. For this reason, we gloss over the distinction between $\mco[X]/I_2(X)$ and the ring $A$ in the sequel. Moreover, $A$ and $\mco[X]/I_2(X)$ are torsion-free $\mco$-modules, so one can often deduce their properties from the corresponding properties of rings obtained by going modulo the uniformizer, $\varpi$.

For instance, from \cite[Theorem~7.3.1]{Bruns/Herzog:1998a} we get that, viewed in $k[X]$, the ideal $I_2(X)$ is prime and of height $n-1$, and the quotient ring $k[X]/I_2(X)$ is Cohen-Macaulay. These properties carry over to $I_2(X)$ viewed as an ideal in $P$. In particular, the ring $A$ is a complete Cohen-Macaulay local domain of Krull dimension $n+2$; observe that the Krull dimension of $P$ is $2n+1$. See also \cite[Section~3.4]{Bruns/Conca/Raicu/Varbaro:2022}.

An $\mco$-valued point $\lambda\colon A\to \mco$ is determined by a matrix
\[
\bos{a}\colonequals 
\begin{bmatrix} 
a_{11}&\dots&a_{1n} \\
a_{21}&\dots&a_{2n}
\end{bmatrix}
\]
with coefficients in $(\varpi)$ whose rank is $\le 1$;  here $a_{ij}=\lambda(x_{ij})$. We write $\lambda_{\bos{a}}$ for this augmentation, and set
\[
w_{\bos a}\colonequals \sup\{s\ge 0\mid \bos a\equiv \bos {0}\mod \varpi^s\} = \inf\{\ord(a_{ij}) \mid i,j\}\,.
\]
Evidently $w_{\bos a}$ is finite if and only if $(\bos{a})\ne 0$.

\begin{proposition}
The height of the ideal $\fp_{\bos{a}}\colonequals \Ker\lambda_{\bos{a}}$ equals $n+1$, and  
the pair $(A,\lambda_{\bos{a}})$ is in $\cato_{\mco}(n+1)$ if and only if  $\bos{a}\ne 0$ and has rank $1$. When this holds
\[
\fitt_{n+1}(\fp_{\bos{a}}/\fp^2_{\bos{a}}) = (\varpi^{(n-1) w_{\bos a}})\,.
\]
\end{proposition}

\begin{proof}
The ring $A$ is Cohen-Macaulay, of dimension $n+2$, and the Krull dimension of $A/\fp_{\bos{a}}\cong \mco$ is one.
Thus the height of $\fp_{\bos{a}}$ is $n+1$. 

Since $I_2(X)$ is generated by homogeneous elements, for the standard grading on $P$, it clear that $A$ is not regular at $(X)$,
the irrelevant ideal of $A$; that is to say, $\lambda_{\bos{a}}$ is not regular when $\bos{a}=0$.

Assume $\bos{a}$ is such that $a_{st}\ne 0$ for some $(s,t)$. Then in $P$ the indeterminate $x_{st}$ becomes invertible after localization at $\fp_{\bos{a}}$, and it is easy to verify that 
\[
I_2(X)P_{\fp_\bos{a}} = (x_{ij} \mid (i,n)\ne (s,t))P_{\fp_{\bos{a}}}\,,
\]
Thus  $A_{\fp_\bos{a}}$ is a regular local ring, that is to say, $\bos{a}$ is a smooth $\mco$-point in $\Spec A$.
This settles the first part of the claim. 

As to the second, it is not hard to calculate that 
    \[
    \fitt_{n+1}(\hat{\Omega}_{A/\mco}) = (X)^{n-1}A\,,
    \]
    the $(n-1)$st power of the ideal generated by the residue classes of the $x_{ij}$. See, for instance, \cite[Proposition~7.3.4]{Bruns/Herzog:1998a} or \cite[Theorem~3.4.6]{Bruns/Conca/Raicu/Varbaro:2022}, which treat the case of determinantal rings over a field.   Thus we conclude that, as ideals in $A$, one has
    \[
    \fitt_{n+1}(\fp/\fp^2) = \lambda_{\bos{a}}(X)^{n-1}\,.
    \]
    This justifies the claim about the torsion part of the cotangent module of $\lambda_{\bos{a}}$. Observe that the ideal above is nonzero if and only if $\bos{a}\ne 0$. By ~\cite[Theorem~2.6]{Iyengar/Khare/Manning/Urban:2024}, the ring $A$ is regular if, and only if, $\fp/\fp^2$ is torsion-free, so the equality above gives another proof of the claim that $\lambda_{\bos{a}}$ is smooth if and only if $\bos{a}\ne 0$.
\end{proof}

In what follows we we treat congruence ideals as ideals in $\mco$, following the discussion after \eqref{eq:eta-defn}.
The number $\delta_{\lambda_{\bos a}}(A)$ is the Wiles defect of $A$ at $\lambda_{\bos{a}}$; see \eqref{eq:wiles-defect}.

\begin{proposition}
\label{pr:det-congruence}
Fix a smooth $\mco$-valued point $\lambda_{\bos{a}}\colon A\to \mco$ on $\Spec A$. One has
\[
\eta_{\lambda_{\bos a}}(A) = (\varpi^{w_{\bos a}}) \quad\text{and}\quad \delta_{\lambda_{\bos a}}(A) = (n-2) w_{\bos a}\,.
\]
\end{proposition}

\begin{proof}
We use Theorem~\ref{th:gorsum} and to that end start by finding a surjective map $\alpha\colon C\to A$ in $\cato_\mco(n+1)$, with $C$ a complete intersection ring. Finding such a $C$ is tantamount to finding a $P$-regular sequence contained in the ideal $I_2(X)$ defining $A$; this is not difficult. The tricky part is find a sequence such that the given smooth $\mco$-valued point on $\Spec A$ is also smooth as a point on the complete intersection.

We simplify this search by exploiting the $\GL_2(\mco)\times \GL_n(\mco)$ action on $A$ induced by the natural action on $X$. Since $\mco$ is a discrete valuation ring, this action is transitive on the collection of $\mco$-valued points $\lambda_{\bos{a}}$ with a fixed $w_{\bos{a}}$. It is then not hard to see that congruence ideal $\lambda_{\bos{a}}$ is independent of $(\bos{a})$, as long as $w_{\bos{a}}$ is fixed. We may thus assume that there exist an integer $s\ge 1$ such that
\begin{equation}
\label{eq:good-point}
\bos{a}  \colonequals 
\begin{bmatrix} 
\varpi^s&\dots&\varpi^s \\
0&\dots&0
\end{bmatrix}
\end{equation}
The relevance of this choice will become clear shortly.

In what follows we write $[i\ j]$ for the size 2 minor of $X$ obtained by taking $i$'th and $j$'th column of the matrix $X$; that is
\[
[i\ j] \colonequals
\begin{vmatrix}
    x_{1i} & x_{1j} \\
    x_{2i} & x_{2j}
\end{vmatrix} = x_{1i}x_{2j} - x_{2i}x_{1j}\,.
\]
Consider the minors $f_i\colonequals [i, i+1]$ for $1\le i\le n-1$ and set
\[
C\colonequals \frac{P}{(f_1,\dots,f_{n-1})}\,.
\]
The result below yields that $C$ is a complete intersection of codimension $n-1$; equivalently, of Krull dimension $n+2$.

\begin{lemma}
\label{le:C-is-ci}    
The sequence $\bos{f}\colonequals f_1,\dots,f_{n-1}$ is regular in $P$.
\end{lemma}

\begin{proof}
The key point is that in the lexicographic monomial order on $\mco[X]$, where $x_{ij}<x_{st}$ if $i<s$ or $i=s$ and $j<t$ the initial monomials $\mathrm{in}(f_1),\dots,\mathrm{in}(f_{n-1})$ are $x_{11}x_{22}, \dots, x_{1\, n-1}x_{2\, n}$, and this is evidently a regular sequence in $\mco[X]$. This implies that $f_1,\dots,f_{n-1}$ are themselves a regular sequence; see, for instance, \cite[Proposition~1.2.12]{Bruns/Conca/Raicu/Varbaro:2022}.
\end{proof}

Evidently, the sequence $\bos{f}$ is contained in the ideal $I_2(X)$ so the quotient map $P\to A$ factors through $C$ to yield a surjective map $\alpha\colon C\to A$. Now, for an arbitrary smooth point $\lambda_\bos{a}\colon A\to \mco$ in $\Spec A$ the corresponding point $\lambda_{\bos{a}} \alpha$ on $\Spec C$ may not be smooth. However, we have already explained that it suffices to consider an $\mco$-valued point of the form \eqref{eq:good-point}. For such an $(\bos{a})$ a direct calculation yields that the cotangent module $\fp/\fp^2$ of $\lambda_{\bos{a}}\alpha$ is the cokernel of the following $n\times (n-1)$ matrix:
\[
\begin{bmatrix}
    \varpi^s & 0 & 0 & \cdots  & 0 & 0 & 0\\
    \varpi^s & \varpi^s & 0 &\cdots & 0 & 0 & 0 \\
    0 & \varpi^s & \varpi^s &\cdots & 0 & 0 & 0\\
    \vdots & \vdots & \vdots &\cdots &\vdots&\vdots &\vdots \\
    0 & 0 & 0 &\cdots & 0 & \varpi^s &\varpi^s \\
    0 & 0 & 0 &\cdots & 0  &0 & \varpi^s 
\end{bmatrix}
\]
Here we are exploiting the fact that since $a_{2i}=0$, one has
\[
\left.\frac{\partial f_j}{\partial x_{1i}}\right|_{\bos{a}} = 0 \quad\text{for all $i,j$.}
\]
See also Remark~\ref{re:determinant-case}. Therefore one gets that
\begin{equation}
\label{eq:fitt-C}
\fitt_{n+1}(\fp/\fp^2)=(\varpi^{(n-1)s})=(\varpi^{(n-1)w_{\bos{a}}})\,.
\end{equation}
In particular, $\lambda_{\bos{a}}\alpha$ is a smooth point on $\Spec C$.

The next step is to compute the ideal $C[\ker \alpha]$, which is the image in $C$ of the ideal $((\bos{f}): I_2(X))$ in $P$.
We claim that 
\[
((\bos{f}): I_2(X)) \equiv \prod_{i=2}^{n-1}(x_{1i},x_{2i}) \mod (\bos{f}) \,.
\]
Indeed, for any integer $1\le i\le n-2$ one has that
\[
\begin{vmatrix}
    x_{1,i} & x_{1,i+1} & x_{1,i+2} \\
    x_{1,i} & x_{1,i+1} & x_{1,i+2} \\
    x_{2,i} & x_{2,i+1} & x_{2,i+2}
\end{vmatrix}
=0
\]
and this gives us relations
\[
x_{1,i+1}[i, i+2] = x_{1,i+2}[i,i+1] +  x_{1,i}[i+1,i+2]\,.
\]
By symmetry, one gets also relations
\[
x_{2,i+1}[i, i+2] = x_{2,i+2}[i,i+1] +  x_{2,i}[i+1,i+2]\,.
\]
Using these it is straight forward to verify that
\[
\prod_{i=2}^{n-1}(x_{1i},x_{2i})\subseteq ((\bos{f}): I_2(X))\,.
\]
It suffices to verify that equality holds modulo $\varpi$, so then one can work in the ring $k[X]/I_2(X)$.
One can then exploit the fact that both ideals in questions are, after twisting by $-2$, isomorphic to the graded canonical module of the ring $k[X]/I_2(X)$; for the ideal on the left, this follows from the discussion in \cite[Theorem~6.7.13]{Bruns/Conca/Raicu/Varbaro:2022}, while for the ideal on the right, it holds because $C/\varpi C$ is Gorenstein. It then follows by comparing Hilbert series that the inclusion is an equality. In conclusion
\[
\lambda_{\bos{a}}(C[\Ker\alpha]) = (\varpi^{(n-2)w_{\bos{a}}})\,.
\]
Therefore applying Theorem~\ref{th:gorsum} we get the second equality below
\begin{align*}
(\varpi^{(n-1)w_{\bos{a}}}) = \eta_{\lambda_{\bos a}\alpha}(C) 
= (\varpi^{(n-2)w_{\bos{a}}}) \eta_{\lambda_{\bos a}}(A)
\end{align*}
The first equality is from \eqref{eq:fitt-C} and holds because $C$ is a complete intersection. We conclude that $\eta_{\lambda_{\bos a}}(A)=(\varpi^{w_{\bos{a}}})$, as claimed.
\end{proof}

\begin{remark}
\label{re:determinant-case}
In the notation of the proof of Proposition~\ref{pr:det-congruence}, by a direct calculation we get that
\[
\fitt_{n+1}(\hat{\Omega}_C/\mco) = (\prod_{i=2}^{n-2}x_{1i}\cdot X+\prod_{i=2}^{n-2}x_{2i}\cdot X)C
\]
This is subsumed in the computations described in a manuscript in preparation by Khan and Maithani~\cite{Khan/Maithani:2026} that deals with congruence ideals of more general determinantal rings.
\end{remark}

\section{Congruence ideals of local deformation rings}
\label{se:BKM-revisited}
In this section we calculate the congruence ideals of local (Steinberg and unipotent) deformation rings   that are considered in \cite{Bockle/Khare/Manning:2024}, at  augmentations of local (unipotent)  deformation rings that are pulled back from  either its Steinberg or unramified quotient; the latter case was not considered in \cite{Bockle/Khare/Manning:2024}. Using our computations in the former (Steinberg) case, in Section~\ref{se:NT} we refine the results in \cite{Bockle/Khare/Manning:2024} about computing the defect of Hecke algebras at augmentations induced by newforms  $f \in S_2(\Gamma_0(N))$  (for  squarefree level N, that is prime to $p$) by a new method that relies on the study of congruence ideals. 

When \cite{Bockle/Khare/Manning:2024} was written there was no notion of a congruence ideal for points of  higher codimension, and only the Wiles defect of the ring is computed, in effect by a reduction to the classical case where $c=0$. The approach here is more direct.

Let $p$ be a prime number and $\mco$ the ring of integers in a finite extension of $\mbb{Q}_p$. Let $\varpi$ denote the uniformizer of $\mco$ as before, $k$ its residue field, and $E$ its field of fractions. Let $F_v$ be a local field  (a finite extension of $\Q_\ell$ for $\ell \neq p$)  and $q_v$ the order of the residue field of $F_v$.   (The notation anticipates being in a global situation where $F$ is a number field and $v$ a place of $F$ not above $p$.) Fix a representation $\rhobar\colon G_{F_v}\to \GL_2(k)$, where $G_{F_v}$ is the absolute Galois group of $F_v$, with $\det(\rhobar)=\bar \epsilon_p$ the mod $p$ cyclotomic character and the conductor $N(\rhobar)|q_v$. We  assume  that  if $\rhobar$ is unramified, then $\trace(\rhobar(\Frob_v))=\pm (q_v+1)$.

Let $R_v^\square$ be  the universal framed deformation ring of $\rhobar$. We write $R^{\st}_v$ for the Steinberg quotient of $R_v^\square$, defined in \cite[pp. 34]{Bockle/Khare/Manning:2024}. The unipotent and unramified quotients are denoted $R^{\uni}_v$ and $R_v^{\unr}$, respectively.  When $q_v$ is $-1$ mod $p$,  the ring $R_v^{\st}$ is the $R^{\st(\beta_v)}$ of op. cit.  with $\beta_v=1$ or $-1$. These rings are reduced and flat over $\mco$, and thus characterized by their $\overline{\QQ}_p$ points. In the case of  $R_v^{\st}$ the points in $\Spec R_v^{\st}(\overline{\QQ}_p)$ correspond to representations of $G_v$ that are of the form 
\[
 \begin{bmatrix} 
 \epsilon_p\chi & * \\ 0 & \chi 
 \end{bmatrix}
 \] 
 for $\chi$ an unramified quadratic character (that is determined by $\rhobar$ when $q_v \not\equiv -1 \pmod{p}$, and with $\chi(\Frob_v)=\beta_v$ when $q_v \equiv -1 \pmod{p}$) and $\epsilon_p$ the $p$-adic cyclotomic character. We implicitly choose $\beta_v$  so that the augmentation  $\lambda$ below factors through $R^{\st}$. We also consider the modified deformation ring $R_v^{\phuni}$,  which is an extension of $R_v^{\uni}$.  

We recall presentations of these rings assuming that $q_v$ is 1 mod $p$, and $\rhobar$ is scalar (and for simplicity, the identity matrix). 

\subsection*{The deformation rings}
Set $R\colonequals \mco\pos{a,b,c,\alpha,\beta,\gamma,X}$. All three deformation rings---the unipotent; the Steinberg; and the unramified--can be realized as a quotient of $R$. Let $I^{\uni}$, $I^{\st}$, and $I^{\unr}$ denote the corresponding ideals; these are described explicitly in what follows. The ideal $I^{\uni}$ is generated by elements
\begin{gather*}
    X\alpha,\   X\beta,\  X\gamma,\ \alpha^2+\beta\gamma,\  b\alpha - a\beta,\  a\alpha+b\gamma\, \\
    c\beta  - b\gamma + (q_v-1)\alpha,\ c\alpha - (q_v+a-1)\gamma,\ a^2 +bc + aX + (q_v-1)a + q_vX\,. 
\end{gather*}

The universal framed deformation  corresponding to $R_v^{\uni}$ factors through the tame quotient $G^t_q$ of $G_{F_v}$, and if $\sigma$ is a  lift of Frobenius, and  $\tau$ is a topological generator of the inertia subgroup of $G^t_q$, we have that $\sigma\tau\sigma^{-1}=\tau^q$,  and the deformation is given by $\sigma\mapsto A$ and $\tau\mapsto I+N$, where
\begin{equation}
\label{eq:AN}
A\colonequals \begin{bmatrix} 
q_v+a & b \\ 
c & 1-a-X 
\end{bmatrix} 
\quad \text{and} \quad
N\colonequals 
\begin{bmatrix} 
\alpha & \beta \\ 
\gamma & -\alpha 
\end{bmatrix}
\end{equation}
The salient aspects of the structure of $R_v^{\uni}$, for the purposes of computing congruence ideals, are recorded in the result below, extracted from \cite[Lemma~5.4]{Bockle/Khare/Manning:2024}.
\begin{lemma}
\label{le:uni-properties}
With notation as above, one has
\begin{enumerate}[\quad\rm(1)]
\item 
$I^{\st}= I^{\uni}+(X)$ and $I^{\unr}= I^{\uni}+(\alpha,\beta,\gamma)$; these are prime ideals.
\item 
 $I^{\uni}=I^{\st}\cap I^{\unr}$.
\item 
 $R_v^{\uni}$ is a Gorenstein ring of Krull dimension $4$. 
\item 
 $R^{\st}$ is a Cohen-Macaulay domain of Krull dimension $4$.
\item 
 $R_v^{\unr}$ is a regular ring of Krull dimension $4$.\qed
\end{enumerate}
\end{lemma}

It follows from the discussion above that, as subsets of $\Spec R$, there is an equality
\[
\Spec R_v^{\uni}=\Spec R_v^{\st}\cup \Spec R_v^{\unr}\,.
\]
Since $\Spec R_v^{\st}$ and $\Spec R_v^{\unr}$ are components of $\Spec R_v^{\uni}$ a smooth point must lie in one or the other of the components, but not both.  One has the following, Mayer-Vietoris, exact sequence of $R^{\uni}$-modules
\begin{equation}
    \label{eq:uni-Mayer-Vietoris}
0\longrightarrow R_v^{\uni}\longrightarrow R_v^{\st}\oplus R_v^{\unr}\longrightarrow 
        \frac{\mco\pos{a,b,c}}{(a(a+q_v-1)+bc)}\longrightarrow 0\,.
\end{equation}
This sequence is helpful in computing the congruence ideals of $R_v^{\uni}$ in terms of those of $R_v^{\st}$ and $R_v^{\unr}$; compare Corollary~\ref{co:fiber-product}. 

Since $R_v^{\st} = R/I^{\st}$ with $I^{\st} = I^{\uni}+(X)$, we can also write it  as the quotient  of $\mco\pos{a,b,c,\alpha,\beta,\gamma}$. The defining ideal is  generated by the size $2$ minors of the matrix
\begin{equation}
\label{eq:St-relations}
    \begin{bmatrix}
   \alpha & \beta & q_v-1+a & b \\
   \gamma & -\alpha & c & -a 
\end{bmatrix}
\end{equation}

Since $R_v^{\unr}=R/I^{\unr}$ with $I^{\unr}=I^{\uni}+(\alpha, \beta, \gamma)$ one gets that
 \begin{equation}
 \label{eq:unr-relations}
R_v^{\unr}   = \frac{\mco\pos{a,b,c,X}}{(a-(q_v+a)(a+X)-bc)}\cong \mco\pos{a,b,c}\,.     
 \end{equation}

\subsection*{Modified deformation rings}
The deformation ring corresponding to modified unipotent deformations, where one keeps track of an eigenvalue of  Frobenius, is
\begin{equation}
\label{eq:mod-Uni}
R_v^{\phuni} \colonequals  \frac{R_v^{\uni}[y]}{(y^2-(q_v+1-X)y+q_v,(y-1)I^{\unr})}\,.
\end{equation}
The quadratic polynomial in sight is the characteristic polynomial of the matrix $A$ from \eqref{eq:AN}. This can be viewed as an $R_v^{\uni}$-algebra. It is a local ring because, modulo the maximal ideal $\fm$ of $R_v^{\uni}$, it is the local ring $k[y]/((y-1)^2)$.

By the same token, the modified unramified deformation ring
\begin{equation}
\label{eq:mod-unramified}
R_v^{\phunr}\colonequals \frac{R_v^{\unr}[y]}{(y^2-(q_v+1-X)y+q_v)}\,,
\end{equation}
is local. In the ring $R[y]=\mco\pos{a,b,c,\alpha,\beta,\gamma,X}[y]$ consider the ideals
\begin{align}
\label{eq:phuni-relations}
\begin{aligned}
 J^{\phuni} &\colonequals I^{\uni} R[y]+ (y^2-(q_v+1-X)y+q_v) + (y-1)I^{\unr}     \\
J^{\st}    &\colonequals J^{\phuni} + (y-1) + (X) R[y] = I^{\uni}R[y]+(y-1) \\
J^{\phunr} &\colonequals J^{\phuni} + (\alpha,\beta,\gamma) R[y]\,.
\end{aligned}
\end{align}
The notation is justified by the following, easy to verify, isomorphisms.

\begin{lemma}
\label{le:phuni-components}
With the notation above, one has isomorphisms of rings:
\[
\frac{R[y]}{J^{\phuni}} \cong R_v^{\phuni}\,,\quad 
\frac{R[y]}{J^{\st}} \cong R_v^{\st}\,, \quad\text{and}\quad
\frac{R[y]}{J^{\phunr}} \cong R_v^{\phunr}\,.\qed
\]
\end{lemma}
Since $R_v^{\st}$ and $R_v^{\phunr}$ are domains, this explains the first part of the following result. It is an analogue of Lemma~\ref{le:uni-properties} for the framed unipotent deformation ring, and it is extracted from the proof of \cite[Lemma~5.3]{Bockle/Khare/Manning:2024}; our notation and presentation differ from the one in \emph{op.~cit.}

\begin{lemma}
\label{le:phuni-properties}
With notation as above, one has
\begin{enumerate}[\quad\rm(1)]
\item 
$J^{\st}$ and $J^{\phunr}$ are prime ideals.
\item 
 $J^{\phuni}=J^{\st}\cap J^{\phunr}$.
\item 
 $R_v^{\phuni}$ is a Gorenstein ring of Krull dimension $4$. 
\item 
 $R_v^{\phunr}$ is a complete intersection ring of Krull dimension $4$.\qed
\end{enumerate}
\end{lemma}
Thus we get the Mayer-Vietoris sequence
\begin{equation}
    \label{eq:phuni-Mayer-Vietoris}
0\longrightarrow R_v^{\phuni}\longrightarrow R_v^{\st}\oplus R_v^{\phunr}\longrightarrow 
        \frac{\mco\pos{a,b,c}}{(a(a+q_v-1)+bc)}\longrightarrow 0\,.
\end{equation}
Akin to the situation with $\Spec R_v^{\uni}$, a smooth point of $\Spec R_v^{\phuni}$ lies in
one of its components, $\Spec R_v^{\st}$ or $\Spec R_v^{\phunr}$, but not both. 

\subsection*{Steinberg augmentations}
Let $\lambda\colon R_v^{\st} \to\mco$ be an augmentation such that the induced representation $\rho_\lambda\colon G_{F_v} \to \GL_2(\mco)$ lifts $\rhobar$ and is of the form
 \[
 \begin{bmatrix} 
 \epsilon_p\chi & * \\ 0 & \chi 
 \end{bmatrix}
 \] 
 for $\chi$ an unramified quadratic character and $\epsilon_p$ the $p$-adic cyclotomic character. Let $n_v$ (respectively, $m_v$) be the largest integer $N$  such that $\rho_\lambda$  (respectively, $\rho_\lambda|_{I_q}$) mod $\varpi^N$ is scalar. We assume  $\rho_\lambda$ is ramified, so $\lambda$ does not factor through $R_v^{\unr}$.

\begin{lemma}
Any Steinberg augmentation $\lambda\colon R_v^{\uni}\to \mco$ extends uniquely to an augmentation 
$ R_v^{\phuni}\to \mco$, and this is determined by $y\mapsto 1$.
\end{lemma}

\begin{proof}
Set $\fp\colonequals \Ker\lambda$ and $E$ the residue field at $\fp$, namely, the fraction field of $\mco$. The prime ideals in $R_v^{\phuni}$ lying over $\fp$ are the prime ideals in spectrum of the ring
\begin{align*}
E\otimes_{R_v^{\uni}}R_v^{\phuni} 
            &\cong \frac{E[y]}{(y^2-(q_v+1-X)y+q_v,(y-1)I^{\unr})} \\
            &\cong \frac{E[y]}{(y^2-(q_v+1) + q_v,y-1)}\\
            &\cong \frac{E[y]}{(y-1)}\\
            &\cong E
\end{align*}
viewed as a subset of $\Spec R_v^{\phuni}$. The second isomorphism above holds because $\fp$ contains $X$, and $I^{\unr}$ is not contained in $\fp$. The rest of the isomorphisms are clear. This justifies the claim.
\end{proof}

One interpretation of the result above is that Steinberg augmentations of $R_v^{\uni}$ are induced, via restriction, by Steinberg augmentations of $R_v^{\phuni}$. This is the prespective we take in the next result. It recovers the calculations in \cite{Bockle/Khare/Manning:2021b, Bockle/Khare/Manning:2024}. 

\begin{theorem}
\label{thm:uni-steinberg}
When $\lambda\colon R_v^{\phuni} \to \mco$ is a Steinberg augmentation, one has
\begin{enumerate}[\quad\rm(1)]
    \item $\eta_{\lambda}(R_v^{\st}) = (\varpi^{n_v})$ 
            and $\fitt_0 \Phi_{\lambda}(R_v^{\st})= (\varpi^{3n_v})$,
    \item  $\eta_{\lambda}(R_v^{\uni}) = (\varpi^{m_v+n_v})$ 
            and $\fitt_0 \Phi_{\lambda}(R_v^{\uni})= (\varpi^{m_v+2n_v})$,
    \item $\eta_{\lambda}(R_v^{\phuni}) = (\varpi^{m_v+n_v})$ 
            and $\fitt_0\Phi_{\lambda}(R_v^{\phuni})= (\varpi^{m_v+3n_v})$. 
\end{enumerate}
\end{theorem}

\begin{proof}
 We only justify the claims when $q_v$ is $1$ mod $p$ and $\rhobar$ is scalar; outside this case $n_v=0$ (and then for instance $R^{\st}$ is smooth), and the proofs are easier. One may easily reduce to the case $\rhobar$ is the trivial  representation.

(1) In terms of the description in~\eqref{eq:St-relations}, the augmentation  $\lambda\colon R^{\st}_v\to \mco$   satisfies
\begin{gather*}
\lambda(a)=\lambda(c)=\lambda(\alpha)=\lambda(\gamma)=0  \\
\lambda(b)=s,\ \lambda(\beta)=t \quad \text{with $s,t \in (\varpi)$ and $t \neq 0$.}
\end{gather*}
For instance the  fact that $t \neq 0$ follows from our assumption that $\rho_\lambda$ is ramified; more precisely $\ord(t)=m_v$.  
Thus $\lambda$ is given by specializing the matrix \eqref{eq:St-relations} to
\[
\begin{bmatrix}
   0 & t & q_v-1 & s \\
   0 & 0 & 0 & 0
\end{bmatrix}
\]
In particular, $n_v= \min\{ \ord(q_v-1),\ord(s),\ord(t)\}$.

It is clear from the description of $R_v^{\st}$ as the ring defined by the size $2$ minors of the matrix \eqref{eq:St-relations} that  it is isomorphic to the quotient of the determinantal ring $A$ consider in Section~\ref{se:determinantal-rings}, with $n=4$, modulo the relations 
\[
x_{11} +x_{22}=0\quad \text{and}\quad x_{13}+x_{24}=q_v-1\,.    
\]
It is clear that these relations form a regular sequence on $A$.  With $\fp$ denoting the kernel of the composition $\alpha\colon A\to R_v^{\st}\to\mco$, it is easy to verify that the order ideals of the residue classes of the elements  above in $\fp/\fp^2$ is $\mco$. So from Proposition~\ref{pr:det-congruence}, Theorem~\ref{th:deformation}, and Lemma~\ref{le:deformation-cot} we deduce that 
\begin{align*}
\eta_{\lambda}(R_v^{\st}) & = \eta_{\lambda\alpha}(A) = (\varpi^{n_v})    \\
\fitt_0\Phi_{\lambda}(R_v^{\st}) &= \fitt_0 \Phi_{\lambda\alpha}(A) = (\varpi^{3n_v}) \\
\delta_{\lambda}(R_v^{\st}) & = \delta_{\lambda\alpha}(A) = 2n_v\,. 
\end{align*}
This justifies (1).

(2) Next we compute the congruence ideal for the pullback of the Steinberg augmentation to  $R_v^{\uni}$. In particular, $\lambda(X)=0$ also holds. As $\rho_\lambda$ is ramified, $\lambda$ is a smooth point of $R_v^{\uni}$. We apply Corollary~\ref{co:fiber-product} to the ring $R_v^{\uni}$ and ideals $I^{\st}R_v^{\uni}$ and $I^{\unr}R_v^{\uni}$. It follows from 
Lemma~\ref{le:uni-properties} and the Mayer-Vietoris sequence~\ref{eq:uni-Mayer-Vietoris} that the hypothesis of Corollary~\ref{co:fiber-product} are satisfied so, building on the computation of the congruence ideal of $\lambda\colon R_v^{\st} \to \mco$ computed in (1), we get the first equality below
\[
\eta_{\lambda}(R^{\uni}_v) 
    = \lambda(I^{\unr})\cdot \eta_{\lambda}(R^{\st}_v)
    = \lambda(I^{\unr})\cdot (\varpi^{n_v}) 
    = (\varpi^{n_v+\ord(t)}) 
    = (\varpi^{m_v +n_v}).
\]
The second equality is from the already established part (1). The torsion part of the cotangent module of $\lambda$ has length
\[
\length_{\mco}\Phi_{\lambda}(R_v^{\uni})=m_v+2n_v.
\]
This follows from the  computation of $\Phi_{\lambda}(R_v^{\st})$,  as in \cite{Bockle/Khare/Manning:2021b} it is proved that 
\[
\length_{\mco}\Phi_{\lambda}(R_v^{\uni}) - \length_{\mco}\Phi_{\lambda}(R_v^{\st})=m_v-n_v\,.
\]
This justifies the equalities in (2).

(3) 
 As in the proof of (2), given Lemma~\ref{le:phuni-properties} and the exact sequence \eqref{eq:phuni-Mayer-Vietoris}, the claim about the congruence ideal of $R_v^{\phuni}$ can be deduced from (1). The claim about the torsion in the cotangent space of $\lambda$ is a direct computation.
\end{proof}

Let $\iota\colon R_v^{\uni}\to R_v^{\phuni}$ be the inclusion and $\lambda\colon R_v^{\phuni}\to\mco$ a Steinberg augmentation.
With $\fp=\Ker \lambda\iota$ and $\fq=\Ker \lambda$, the map $\iota$ induces a map of $\mco$-modules
\[
\Phi_\iota\colon \fp/\fp^2\longrightarrow \fq/\fq^2\,.
\]
The result below is needed for some applications in Section~\ref{se:NT}. It can be deduced as a consequence of Theorem~\ref{thm:uni-steinberg}, but we give a direct proof.

\begin{lemma}
\label{le:transfer-factor}
The dual map $\Phi^*_\iota$, and hence also $\wedge^c\Phi^*_\iota$, is an isomorphism.
\end{lemma}

\begin{proof}
We have only to verify that $\Phi^*_\iota$ is surjective, since it is a map of free $\mco$-modules of the same rank, namely, $3$. With $\fp,\fq$ as above, $\Phi^*_\iota$ fits in the (cohomological version of the) Jacobi-Zariski exact sequence of $\mco$-modules
\[
\Hom_{\mco}(\fq/\fq^2,\mco) \xrightarrow{\ \Phi^*_\iota\ } \Hom_{\mco}(\fp/\fp^2,\mco) \longrightarrow \mathrm{D}^1(R_v^{\phuni}/R_v^{\uni},\mco)
\]
arising from maps $R_v^{\uni}\xrightarrow{\iota} R_v^{\phuni}\xrightarrow{\lambda} \mco$, where the term on the right is the first Andr\'e-Quillen cohomology module of $R^{\phuni}$ over $R_v^{\uni}$ with coefficients in $\mco$, viewed as an $R_v^{\phuni}$-module via $\iota$. It thus suffices to verify that this module is $0$.

Consider the factorization $R_v^{\uni}\to R_v^{\uni}[y]\to R_v^{\phuni}$ of the map $\iota$, where the second map is the quotient modulo the ideal
\[
I\colonequals (y^2-(q_v+X-1)y+ q_v) +  (y-1)I^{\unr}
\]
This induces a map
\[
\frac{I}{I^2}\otimes_{R_v^{\phuni}}\mco \cong  \frac{I}{I^2+\fq I} \longrightarrow \mco dy
                        \cong \Omega\otimes_{R_v^{\phuni}}\mco
\]
where $\Omega$ is the module of K\"ahler differentials of $R_v^{\phuni}$ over $R_v^{\uni}$. It is straightforward to check that the torsion-free quotient of $I/(I^2+\fq I)$ is of rank $1$ and generated by the class of $y^2-(q_v+X-1)y+ q_v$, and also that the map above is induced by
\begin{gather*}
\tfree{\big(\frac{I}{I^2+\fq I}\big)} \longrightarrow \mco dy \\
[y^2-(q_v+X-1)y+ q_v]\mapsto (q_v-1)dy
\end{gather*}
and hence that it is an isomorphism. It remains to note that the cohomological version of the Jacobi-Zariski sequence with coefficients in $\mco$ arising from the factorization of $\iota$ given above  reads
\[
\Hom_{\mco}(\mco dy,\mco) \longrightarrow  \Hom_{\mco}(\frac{I}{I^2+\fq I},\mco) \longrightarrow \mathrm{D}^1(R_v^{\phuni}/R_v^{\uni},\mco) \longrightarrow 0
\]
It follows from the discussion above that the map on the left is an isomorphism, and hence that $\mathrm{D}^1(R_v^{\phuni}/R_v^{\uni},\mco)= 0$, as desired.
\end{proof}

\subsection*{Unramified augmentations}
\label{ssec:unramified-augmentations}
Assume that $\rhobar$ is unramified and $\lambda\colon R^{\unr} \to \mco$ is an augmentation such that $\rho_\lambda\colon G_{F_v} \to \GL_2(\mco)$ lifts $\rhobar$.   Recall our standing assumption $\trace(\rhobar(\Frob_v))=\pm (q_v+1)$, and by twisting we may reduce to the case  $\trace(\rhobar(\Frob_v))=(q_v+1)$ The universal representation sends $\Frob_v$ to the matrix
\[
A\colonequals
\begin{bmatrix} 
q_v+a & b \\ 
c & 1-a-X 
\end{bmatrix} 
\]
We may assume that $\rho_\lambda$ is upper triangular and $\rho_\lambda(\Frob_v)$ is  
\[
\begin{bmatrix} \theta+ \hbar & \varepsilon \\ 0 & \theta-\hbar \end{bmatrix}
\]
By pull back, $\lambda$ also  induces an augmentation of  $R^{\uni}$ such that 
\[
\lambda(\alpha) = \lambda(\beta)=\lambda(\gamma)=0\,.
\]
We  define $r_v$  to be the largest power  $N$ of $\varpi$ such that 
\[
\rho_\lambda(\Frob_v) \equiv \begin{bmatrix} 
q_v & 0 \\ 
0 & 1 
\end{bmatrix} \mod \varpi^N
\quad\text{and}\quad q_v\equiv 1 \mod \varpi^N\,.
\]

As we assume that $\det \rho_\lambda=\epsilon_p$, one  may rephrase this as saying that 
$r_v$ is  the largest power  $N$ of $\varpi$ such that 
\[
\rho_\lambda(\Frob_v) \equiv \begin{bmatrix} 
1 & 0 \\ 
0 & 1 
\end{bmatrix} \mod \varpi^N\,.
\]

Equivalently
\begin{align*}
r_v  &=\min\{\ord(q_v-1), \ord \lambda(a),\ord \lambda(b),\ord\lambda(c),\ord\lambda(X)  \}    \\
     &= \min\{\ord(q_v-1), \ord \hbar, \ord(\theta-1),\ord \varepsilon\}\,.
\end{align*}
In particular, $r_v$ is bounded above by $\ord(q_v-1)<\infty$.

\begin{lemma}
For any unramified augmentation $\lambda\colon R_v^{\uni}\to \mco$ and a choice of an eigenvalue for $\rho_\lambda(\Frob_v)$ there is a unique extension of $\lambda$ to $R_v^{\phuni}$, sending $y$ to that eigenvalue.
\end{lemma}

\begin{proof}
Recall that $I^{\unr}=I^{\uni}+(\alpha,\beta,\gamma)$, by Lemma~\ref{le:uni-properties}. This ideal is contained in $\Ker\lambda$ and one has
\[
\frac{R_v^{\phuni}}{I^{\unr}R_v^{\phuni}} 
    \cong \frac{R[y]}{(J^{\phuni}+(\alpha,\beta,\gamma))}\cong R_v^{\phunr}\,,
\]
where the last isomorphism is from Lemma~\ref{le:phuni-components}. This justifies the claim.
\end{proof}

The analog of Theorem~\ref{thm:summary-unr} for unramified augmentations is:

\begin{theorem}
\label{thm:summary-unr}
Assume  $\trace(\rhobar)(\Frob_v)=  (q_v+1)$.  For an unramified augmentation $\lambda\colon R_v^{\uni}\to \mco$ the following equalities hold:
\[
\eta_{\lambda}(R_v^{\uni})  = (\lambda(X)) \quad \text{and}\quad 
\fitt_0 \Phi_{\lambda}(R_v^{\uni}) = (\varpi^{r_v} \lambda(X))\,.
\]
Moreover, for any augmentation $\lambda\colon R_v^{\phunr}\to \mco$ one has
\begin{align*}
&\eta_{\lambda}(R_v^{\phunr})  = (\hbar,\varepsilon) = \fitt_0 \Phi_{\lambda}(R_v^{\phunr})\\
&\eta_{\lambda}(R_v^{\phuni}) = (\hbar,\varepsilon)(\lambda(X)) \text{ and }
\fitt_0 \Phi_{\lambda}(R_v^{\phuni}) = (\hbar,\varepsilon) (\varpi^{r_v} \lambda(X))\,.
\end{align*}
\end{theorem}

\begin{proof}
Using Corollary~\ref{co:fiber-product} and  the fact that $\cmod_{\lambda}(R_v^{\unr})=0$ because the ring $R_v^{\unr}$ is regular---see Theorem~\ref{th:regular-eta}---we get that
\[
\eta_{\lambda}(R_v^{\uni}) 
    = \lambda(I^{\st}) \cdot \eta_{\lambda}(R_v^{\unr}) 
    =  \lambda(I^{\st})
    = (\varpi^{\ord \lambda(X)})\,.
\]
 In particular, $\lambda$ is a smooth point on $\Spec R_v^{\uni}$ if and only if $\lambda(X)\ne 0$ (that is to say, if $\lambda$ does not factor through $R^{\st}$). When this holds, we claim  the length of the torsion part of the cotangent module is given by
\begin{align*}
 \ord \lambda(X) + \min\{\ord(q_v-1), \ord \lambda(a),\ord \lambda(b),\ord\lambda(c),\ord\lambda(X)  \}  
\end{align*}   
which justifies the first part of the statement.

To get this, we start with the fact that the cotangent module is the cokernel of the matrix obtained by applying $\lambda$ to the matrix
\[
\begin{bmatrix}
 -(q_v-1 + 2a +X) & 0 & 0 & 0 & 0 & 0 & 0 & 0 \\
  - c & 0 & 0 & 0 & 0 & 0 & 0 & 0 \\
  - b & 0 & 0 & 0 & 0 & 0 & 0 & 0 \\
  - (q_v+a) & 0 & 0 & 0 & 0 & 0 & 0 & 0 \\
  0 & c & a+X & q_v-1+a & -b & X & 0 & 0 \\
  0 & 0 & 0 & c & a+X & 0 & X & 0 \\
  0 & -(q_v-1+a) & b & 0 & 0 & 0 & 0 & X
\end{bmatrix}
\]
The length of $\Phi_{\lambda}(R_v^{\uni})$ is thus the order of the ideal of minors of size 4 of this matrix. Since $(q_v+\lambda(a))$ is invertible in $\mco$, this is the same as the ideal of minors of size $3$ of the matrix
\[
\begin{bmatrix}
c & a+X & q_v-1+a & -b & X & 0 & 0 \\
  0 & 0 & c & a+X & 0 & X & 0 \\
 -(q_v-1+a) & b & 0 & 0 & 0 & 0 & X 
\end{bmatrix}
\]
A Magma computation, treating $q_v-1,a,b,c$ as indeterminates, yields that this ideal is generated by the images of the elements
\begin{gather*}
(q_v-1)^2X, (q_v-1)aX, (q_v-1)bX, (q_v-1)cX, (q_v-1)X^2 \\    
a^2X,abX, acX, aX^2, 
b^2X, bcX, bX^2, 
c^2X,cX^2,
X^3
\end{gather*}
and  the elements
    \begin{align*}
      (q_v-1)^2a + (q_v-1)bc - a^3 - abc  & = (q_v-1-a)X(q_v+a) \\
      (q_v-1)a^2 + a^3 + abc &= -aX(q_v+a)\\
      (q_v-1)ab + a^2b + b^2c&= -bX(q_v+a)\\
      (q_v-1)ac + a^2c + bc^2 &=-cX(q_v+a)
\end{align*}
where the equalities are obtained using the relation $\det A=q_v$. Since the $q_v+a$ element is going to be invertible in $\mco$ (and in fact already in $R_v^{\uni}$) one gets that (already in $R_v^{\uni}$) this ideal is generated by the elements
\[
(q_v-1)X, aX, bX,cX, X^2\,.
\]
One sees by inspection that the order of this ideal is exactly what is claimed.

The ring $R_v^{\phunr}$ is a complete intersection, so we get the first equality below
\[
\eta_{\lambda}(R_v^{\phunr}) 
        = \fitt_{\mco}\Phi_{\lambda}(R_v^{\phunr})
        = (\hbar,\varepsilon)\,.
\]
The second holds because, by a direct computation, the Fitting ideal in question is
\[
(\lambda(y) - (\theta+\hbar)\,, \lambda(y) - (\theta-\hbar)\,, \varepsilon)\,,
\]
and $\lambda(y)$ is either $\theta+\hbar$ or $\theta-\hbar$.

Finally, the augmentation $\lambda\colon R_v^{\phuni}\to\mco$ factors through $R_v^{\phunr}$, given the structure of $R_v^{\phuni}$ given in Lemma~\ref{le:phuni-properties}, we can deduce the claim about the congruence ideal from already established part (2), using Corollary~\ref{co:fiber-product}.

The calculation of the torsion in the cotangent module is left to the reader.
\end{proof}

\begin{remark}
One can define the corresponding deformation rings $R_v^{\st}, R_v^{\uni}, R_v^{\unr}$ and $R_v^{\phuni}$ also when $\trace(\rhobar)(\Frob_v)\neq \pm (q_v+1)$.  It turns out that $R_v^{\uni}=R_v^{\unr}$ and $R_v^{\phuni}=R_v^{\phunr} $. Thus an augmentation $\lambda\colon R_v^{\uni} \to \mco$, or  $\lambda\colon R_v^{\phuni} \to \mco$ is necessarily unramified. One gets with notation as before, for these two augmentations:
\[
\eta_\lambda(R_v^{\uni})=\mco, \Phi_\lambda(R_v^{\uni})=0 \quad\text{and}\quad \eta_\lambda(R_v^{\phuni})=\fitt_\mco \Phi_\lambda(R_v^{\uni})= (\hbar,\varepsilon)\,.    
\] 
One moral conclusion from these computations is that  the congruence ideal has better functorial properties than the torsion of cotangent spaces, or the invariants of \cite{Bockle/Khare/Manning:2024}. Evidence for this is that the growth of the congruence ideals along $R_v^{\uni} \to R_v^{\st}$, or  $R_v^{\uni} \to R_v^{\unr}$,   namely  by $\ord(t)$, the depth of ramification of $\rho_\lambda$,  or $\ord \lambda(X)=\ord (\trace \rho_\lambda(\Frob_v)-(q_v+1))$ ,  depending on whether we consider a Steinberg or unramified augmentation, does not depend on $n_v$. 
\end{remark}

\subsection*{Errata}
We take this opportunity to record some corrections to \cite{Bockle/Khare/Manning:2024}.
\begin{enumerate}[\quad\rm(1)]
\item  On \cite[pp.~2495]{Bockle/Khare/Manning:2024} the matrix  
\[
\begin{bmatrix} \epsilon_p\chi & * \\ 
0 & \chi 
\end{bmatrix}
\]  
should be replaced by  
\[
\begin{bmatrix} \epsilon_p\chi^{-1} & * \\ 
0 & \chi 
\end{bmatrix}
\] 
\item In \cite[Corollary 5.25]{Bockle/Khare/Manning:2024}, the expressions  
\[
(s,t,q-1)^3/((s-t)t)\quad\text{and}\quad   (s,t,q-1)^3/(st)
\]
should be replaced by  
\[
(s,t,q-1)^2/((s-t)t)\quad\text{and}\quad  (s,t,q-1)^2/(st)
\]
respectively.  
\item In \cite[Theorem 5.26]{Bockle/Khare/Manning:2024} the equations should read
\[
D_{1,\lambda}(R_v^{\phuni}) = \frac{5n_v}e,\quad  c_{1,\lambda}(R_v^{\phuni}) = \frac{3n_v}e,  \quad
  \delta_\lambda(R^{\phuni}) = \frac{2n_v}e\,.
  \]
  This impacts the statement also of Theorem 6.5 in \emph{op.~cit.}
 \item    On \cite[pp.~2501]{Bockle/Khare/Manning:2024}, 4 lines before Lemma 5.4, $I_{\mathbb{Z}}^{\unr}=(\alpha,\beta,\gamma)$ should be replaced by $I_{\mathbb{Z}}^{\unr}=(\alpha,\beta,\gamma,r_9^{\uni})$.
\item
On \cite[pp.~2501]{Bockle/Khare/Manning:2024}, in the line before Lemma 5.4 “by $q$” should be replaced by “by $q-1$”.
\item
On \cite[pp.~2501]{Bockle/Khare/Manning:2024}, Lemma 5.4, line 3, the ideal $\mathcal I$ should be replaced by
  $\mathcal I^{\uni}$.
\end{enumerate}

\section{Number theoretic applications.}

\label{se:NT}
In this section we outline  applications of the new commutative algebra constructions and results to number theory. We focus on two:
\begin{itemize}
\item
The Jacquet-Langlands correspondence for mod $p^n$ modular forms of weight $1$. Theorem \ref{th:defect0} is the key commutative algebra input for this application.
\item
Using our study of cogruence ideals of local deformation rings in \S \ref{se:BKM-revisited}, we give a different approach to the results of \cite{Bockle/Khare/Manning:2021b} and \cite{Bockle/Khare/Manning:2024} about {\em Wiles defects} of Hecke algebras acting on the integral Betti cohomology of Shimura curves and modular curves. Theorem \ref{th:deformation} is the key commutative algebra input for this application.
\end{itemize}

We  first give a sketch of Wiles's proof of the Shimura, Taniyama, Weil conjecture for semistable elliptic curves over $\QQ$; the  applications of  our results on congruence ideals,  Theorem \ref{th:deformation} and Theorem \ref{th:defect0}, are inspired by it. 

One formulation of the Shimura, Taniyama, Weil conjecture is in terms of representations of $G_{\mbb Q}$, the absolute Galois group of $\mbb Q$. Given any elliptic curve $E$ defined over $\mbb Q$, and an odd prime number $p$, by considering the action of $G_{\mbb Q}$ on the $p$-power torsion points on $E$, one gets a two-dimensional Galois representation $\rho_E\colon G_{\mbb Q}\to \GL_2(\mbb Z_p)\to \GL_2(\overline{\mbb Q}_p)$, where $\mbb Z_p$ is the $p$-adic completion of $\mbb Z$. On the other hand, one can also get a Galois representation $\rho_f\colon G_{\mbb Q}\to \GL_2(\overline{\mbb Q}_p)$ starting from a modular eigenform $f$, following work of Eichler and Shimura. The Shimura, Taniyama, Weil conjecture is equivalent to the statement that $\rho_E$ is isomorphic to a representation $\rho_f$ arising from such a weight $2$ modular form.

Wiles puts himself in a situation in which he can assume  that the mod $p$ representation  $\rhobar$, arising from the reduction of $\rho_E$,  arises from a newform and $\rhobar|_{G_{\QQ(\zeta_p)}}$ is irreducible. Using Mazur's deformation theory of Galois representations Wiles reduces this conjecture to proving that certain surjective maps $\alpha_\Sigma\colon R_\Sigma\to {\mbb T}_\Sigma$ of (noetherian, commutative) local algebras over a discrete valuation ring $\mathcal{O}$ are isomorphisms.  The ring $R_\Sigma$ parametrizes  Galois deformations subject to constraints determined by a finite collection of primes $\Sigma$, while ${\mbb T}_\Sigma$ is a Hecke algebra subject to the same constraints, which can be thought of as parameterizing a family of modular forms. Hence proving that $\alpha_\Sigma$ is an isomorphism proves that every Galois representation (like $\rho_E$) parameterized by $R_\Sigma$ corresponds to a modular form.

The algebra ${\mbb T}_\Sigma$ acts faithfully  on a module $M_\Sigma$ constructed from the cohomology of a modular curve. To prove that  $\alpha_\Sigma$  is an isomorphism of complete intersections,     in the version of Wiles's approach given by Diamond~\cite{Diamond:1997}, one  simultaneously shows that  $M_\Sigma$ is free over $R_\Sigma$ and that $R_\Sigma$ is a complete intersection. 

The argument that $M_\Sigma$ is free over $R_\Sigma$ has two distinct parts. In the \emph{minimal case}, when $\Sigma=\varnothing$,  the technique of ``patching" introduced by Taylor and Wiles~\cite{Taylor/Wiles:1995} and adapted by Diamond~\cite{Diamond:1997}, yields a noetherian local ring $R_{\varnothing,\infty}$, a finitely generated $R_{\varnothing,\infty}$-module $M_{\varnothing,\infty}$, and surjective maps  
\[
R_{\varnothing,\infty} \twoheadrightarrow R_{\varnothing} \quad \text{and}\quad M_{\varnothing,\infty} \twoheadrightarrow M_{\varnothing}
\] 
where the map on the left is a homomorphism of rings, say with kernel $I$, and $M_{\varnothing}\cong M_{\varnothing,\infty}/I M_{\varnothing,\infty}$. The point of the patching process is that $R_{\varnothing,\infty}$ is a regular local ring and $M_{\varnothing,\infty}$ is a maximal Cohen-Macaulay module over it. Thus the Auslander-Buchsbaum formula implies  $M_{\varnothing,\infty}$ is free $R_{\varnothing,\infty}$-module, and hence $M_{\varnothing}$ is a free $R_{\varnothing}$-module, as desired. It turns out that $I$ is generated by a regular sequence, so as a byproduct of the patching argument, one gets that $R_{\varnothing}$ is a complete intersection.

The passage from the minimal  to the general, non-minimal, case   in which we allow additional ramification at primes in $\Sigma$, 
is facilitated by a  criterion of  Diamond (building on work of Wiles, and also Lenstra) that detects when a finitely generated module over a local ring is free and  the ring is a complete intersection; compare Theorem~\ref{th:defect0}. To describe this passage, we fix a  complete discrete valuation ring $\mco$ (for instance, $\mbb Z_p$), a complete local $\mco$-algebra $A$ (which could be $R_\Sigma$ or $\mbb T_\Sigma$) and an $\mco$-valued point in $\Spec A$, namely, a map of $\mco$-algebras $\lambda\colon A\to \mco$. A key hypothesis that holds in the context of \cite{Taylor/Wiles:1995,Wiles:1995} is that the map $\lambda_\fp$ is an isomorphism, for $\fp\colonequals \Ker \lambda$; compare Theorem~\ref{th:regular-eta}. In particular, $\fp$ is a minimal prime and hence defines a component of $\Spec A$. 

One shows that   if $\Sigma\subseteq \Sigma'$ then there exists a nonnegative integer $\nu$ such that 
\begin{equation}
\label{eq:goingup}
\begin{aligned}
\eta_{\lambda}(M_{\Sigma'})&\subseteq (\varpi^{\nu})\cdot \eta_{\lambda}(M_{\Sigma}) \\ 
\fitt_\mco \Phi_\lambda(R_{\Sigma'}) &\supseteq (\varpi^{\nu})\cdot \fitt_\mco \Phi_\lambda(R_{\Sigma}) \,. 
\end{aligned}
\end{equation}
This is where Proposition~\ref{pr:invariance-of-domain} and Lemma~\ref{le:change-of-module} come in, but only for $c=0$. Given these inequalities and the criterion of Diamond and Wiles, it is immediate that if $M_\Sigma$ is free as an $R_\Sigma$-module and $R_\Sigma$ is complete intersection, then the same properties hold for $M_{\Sigma'}$ and $R_{\Sigma'}$. Hence the freeness of $M_{\varnothing}$  as an $R_{\varnothing}$-module and the complete intersection property of $R_{\varnothing}$, proved by  patching, propagates to each $M_\Sigma$ and $R_\Sigma$, as desired.

The Langlands conjectures predict that far more general classes of two dimensional Galois representations, beyond those associated to elliptic curves, also correspond to modular forms. Wiles' original argument considers the case of Galois representations expected to correspond to weight $2$ modular forms. The case of Galois representations expected to correspond to modular forms of weight $k>2$ behaves fairly similarly to the weight $2$ case, when the weight is small compared to the residue characteristic $p$;  extensions of  techniques of Wiles can be used to prove $R_\Sigma = {\mbb T}_\Sigma$ results in many such cases.\footnote{In sufficiently high weights, the local deformation rings at $p$ are not smooth, and consequently the ring $R_\varnothing$ may fail to be a complete intersection, and so additional arguments are needed.}  On the other hand, the weight $1$ case behaves very differently to the weight $\ge 2$ case, and a number of additional complications arise when attempting to generalize Wiles' argument to this context.

From here on we make the blanket assumption that $p\ge 3$. 

\subsection*{Weight one.}
Let $D$ be a quaternion algebra over $\mbb Q$, ramified at a set of primes $\mathfrak D$ of even cardinality, with $p\not\in \mathfrak{D}$. One can consider a system of \emph{Shimura curves} $X^D(K)$ associated to compact open subgroups $K\subseteq (D\otimes \mbb{A}^\infty)^\times$, which behave similarly to the classical modular curves. One can define weight $1$ modular forms on the curve $X^D(K)$ analogously to the modular curve case.

Fix a Galois representation $\rhobar:G_{\QQ}\to \GL_2(\overline{\mbb{F}}_p)$ and assume that:
\begin{itemize}
\item $\rhobar|_{G_{\QQ(\zeta_p)}}$ is absolutely irreducible;
    \item $\rhobar|_{G_{\QQ_p}}$ is unramified;
    \item $\rhobar|_{G_{\QQ_q}}$ is non-scalar and has a Steinberg lift for each $q\in\mathfrak{D}$.
\end{itemize}
For any prime $q$ and any $e\ge 0$, let 
\[
K_1(q^e) = \left\{
\begin{pmatrix}a&b\\c&d\end{pmatrix}\in\GL_2(\mbb{Z}_q)\middle|\begin{pmatrix}a&b\\c&d\end{pmatrix}\equiv \begin{pmatrix}*&*\\0&1\end{pmatrix} \pmod{q^e}\right\}\subseteq \GL_2(\mbb{Z}_q).
\]
Take any finite set of prime $\Sigma$, disjoint from $\mathfrak{D}\cup\{p\}$, and consider the compact open subgroup $K_\Sigma = \prod_q K_{\Sigma,q}\subseteq (D\otimes \mbb{A}^\infty)^\times$ where:
\begin{itemize}
    \item $K_{\Sigma,q}$ is the maximal compact subgroup of $(D\otimes {\mbb Q}_q)^\times$ for each $q\in\mathfrak{D}$;
    \item $K_{\Sigma,q} = K_1(q^{c_q+d_q})$ for $q\in\Sigma$, where $q^{c_q}$ is the conductor of $\rhobar|_{G_{\QQ_q}}$ and $d_q = \dim(\rhobar^{I_{\QQ_q}})$;
    \item $K_{\Sigma,q} = K_1(q^{c_q})$ for $q\not\in\Sigma\cup\mathfrak{D}$.
\end{itemize}

Write $X^D_\Sigma$ for the Shimura curve $X^D(K_\Sigma)$. For each $\Sigma$ one can again define a ring $R^D_\Sigma$ parameterizing  lifts of minimal conductor outside $ \mathfrak{D} \cup \Sigma$,  unramified at $p$, and ``Steinberg''  at the primes in $\mathfrak{D}$. One may also define a complex of $\mco$-modules $M_\Sigma^D$ which computes the sheaf cohomology groups defining the space of weight $1$ modular forms on $X^D_\Sigma$, and a Hecke algebra $\mbb{T}^D_\Sigma$ with a derived action on $M_\Sigma^D$.

Again, there is a natural surjective map $\alpha^D_\Sigma\colon R_\Sigma^D\twoheadrightarrow \mbb{T}^D_\Sigma$, which is conjecturally an isomorphism. However there are a number of significant obstacles to generalizing the classical numerical criterion (and hence Wiles' overall strategy) to show that $\alpha_\Sigma^D$ is an isomorphism. In particular:
\begin{enumerate}
	\item The rings $R^D_\Sigma$ and ${\mbb T}^D_\Sigma$ are not in general  complete intersections, and so the numerical criterion cannot be directly used.
	\item Unlike in Wiles' case, the ring ${\mbb T}^D_\Sigma$ may not be flat over $\mco$ and in fact can be entirely torsion, which means that the augmentation $\lambda\colon {\mbb T}^D_\Sigma \to \mco$ used in Wiles' argument may not exist.
	\item The module $M_\Sigma$ from Wiles' argument has been replaced by a complex  $M^D_\Sigma$ of $\mco$-modules with a derived action of ${\mbb T}^D_\Sigma$. There is no clear notion of what it means for such an object to be free over ${\mbb T}^D_\Sigma$, and it's unclear how to generalize the freeness criterion due to Wiles, Lenstra, and Diamond.
\end{enumerate}

It was not clear how the numerical criterion could be adapted to serve in such situations, for the complete intersection property is essential to its application. Some of these problems have been overcome thanks to the work of Kisin, who developed the patching technique so it applies also in the non-minimal case, and  by Calegari and Geraghty, who extending patching to complexes. The patched rings, $R^D_{\Sigma,\infty}$, are often complete intersections and the complexes in question sometimes patch to maximal Cohen-Macaulay \emph{modules} $M^D_{\Sigma,\infty}$. However this is not sufficient to deduce freeness of $M^D_{\Sigma,\infty}$ because the $R^D_{\Sigma,\infty}$ is not usually regular outside the minimal case. All that one can deduce is that $M^D_{\Sigma,\infty}$ is generically free, only yielding $R^D_\Sigma\cong \mbb{T}^D_\Sigma$ up to torsion.

In \cite{Iyengar/Khare/Manning:2024a}, we sidestep the  issues above to prove more refined $R_\Sigma^D=\mbb{T}_\Sigma^D$ results that capture also torsion information. Our strategy is to apply the numerical criterion argument of Wiles and Diamond \emph{after} patching, rather than using patching and the numerical criterion independently. This eliminates the obstacles mentioned above to the use of the numerical criterion in the weight $1$ cases. Specifically:
\begin{enumerate}
	\item The rings $R^D_{\Sigma,\infty}$ can be computed explicitly and are complete intersections in the cases relevant to us, even when $R^D_\Sigma$ fails to be a complete intersection.
	\item By construction $R^D_{\Sigma,\infty}$ is flat over $\mco$, and there always exist suitable augmentations $\lambda\colon R^D_{\Sigma,\infty}\to \mco$, for which $(R^D_{\Sigma,\infty},\lambda)$ is in $\cato_\mco(c)$, even if these augmentations do not factor through the quotient $R^D_{\Sigma,\infty}\to R_\Sigma\to \mco$.
	\item While $M^D_\Sigma$ is a complex, $M^D_{\Sigma,\infty}$ is an $R^D_{\Sigma,\infty}$-module, so complexes need not be  considered in the argument.
\end{enumerate}

In the  forthcoming  paper with Diamond  \cite{Diamond/Iyengar/Khare/Manning:2026a} we generalize the results of \cite[Theorem 14.9]{Iyengar/Khare/Manning:2024a}, allowing for more varied types of ramification of the lift, to prove the following theorem; see \cite{Iyengar/Khare/Manning:2024a} for unexplained notation. The main work is on the automorphic side to generalize ``level raising'' arguments which enable one to allow general ramification behavior.

\begin{theorem}
\label{th:weightone}
With the notation and assumptions from above, there is some $\mu \ge 1$, such that for all finite sets of primes $\Sigma$ disjoint from $\mathfrak{D}\cup\{p\}$, there are $R_\Sigma^D$-modules $W_\Sigma^D$ and isomorphisms
\[
\hh_0(M_\Sigma^D) \cong (R_\Sigma^D)^\mu \oplus W_\Sigma^D.
\]
In particular, $R_\Sigma^D$ acts faithfully on $\hh_0(M_\Sigma^D)$ and hence the map  $\alpha_{\Sigma}^D\colon R_\Sigma^D\to {\mbb T}_\Sigma^D$ is an isomorphism. 
\end{theorem}

From this we can deduce a Jacquet-Langlands correspondence for weight $1$ Hecke algebras, which we state impressionistically here: the Hecke algebra $\TT_\Sigma^D$ acting on the cohomology of the Shimura curve above  is an (explicit) quotient of a Hecke algebra acting on the  weight one cohomology  $\hh^1(X_1(N),\omega)$ for some level $N$.  See \cite[Theorem 14.10]{Iyengar/Khare/Manning:2024b} for a precise statement in a more restricted setting. As these algebras typically have a lot of torsion, the statement  cannot be deduced from the  correspondence for weight  one forms in characteristic 0.

\begin{proof}[Sketch of Proof of Theorem~\ref{th:weightone}]
The strategy primarily follows Wiles'~\cite{Wiles:1995} and Diamond's~\cite{Diamond:1997} original approach, except that we apply the numerical criterion after patching. (For the purpose of the sketch, we make the simplifying assumption that  $\rhobar$ has minimal conductor among its twists, it has  no vexing primes,  and consider only deformations whose determinant is the Teichm\"uller lift of $\det\rhobar$, and modify the level structure accordingly.)  Specifically, we use Theorem \ref{th:defect0} in place of Diamond's numerical criterion. We fix an augmentation $\lambda\colon R_{\varnothing,\infty}^D\to \mco$ for which each $R_{\Sigma,\infty}^D$ is regular at $\Ker\lambda$. Since $R_{\varnothing,\infty}^D$ is a regular local ring, the Auslander-Buchsbaum formula implies $M_{\varnothing,\infty}^D$ is free.  Note that $M_{\varnothing}^D$, and hence  $M_{\varnothing,\infty}^D$, is non-zero by Serre's Conjecture and an analogue of the companion forms theorem for Shimura curves.  This gives the first equality below:
\[
\eta_{\lambda}(M_{\varnothing,\infty}^D) =\mco=\fitt_{\mco} \Phi_\lambda(R^D_{\varnothing,\infty})\,.
\]
The second equality follows as  $R^D_{\varnothing,\infty}$ is a power series ring over $\mco$. For the present purpose, we care only about the equality $\eta_{\lambda}(M_{\varnothing,\infty}^D) = \fitt_{\mco} \Phi_\lambda(R^D_{\varnothing,\infty})$. To complete the argument, we must then propagate it 
to non-minimal $\Sigma$. We use an  argument similar to that of  Wiles and Diamond, using Lemma \ref{le:change-of-module}. 

Indeed for $\Sigma\subseteq \Sigma'$, one has level-raising and lowering maps 
\[
\pi_\infty\colon M_{\Sigma',\infty}^D\longrightarrow M_{\Sigma,\infty}^D\quad\text{and} \quad
\pi_\infty^\vee\colon M_{\Sigma,\infty}^D\longrightarrow M_{\Sigma',\infty}^D\,,
\]
which are constructed by applying the patching process to certain families of natural maps $M_{\Sigma'}^D\to M_\Sigma^D$ and $M_{\Sigma}^D\to M_{\Sigma'}^D$. Direct computations of the compositions $\pi_\infty \circ\pi_\infty^\vee$ and $\pi_\infty^\vee \circ\pi_\infty$ show that $\pi_\infty$ is an isomorphism after localizing at $\Ker \lambda$.

A key number theoretic fact known as \emph{Ihara's Lemma}---proven in this generality in \cite[Proposition 5]{Diamond/Taylor:1994}---implies that $\pi_\infty$ is surjective. Lemma \ref{le:change-of-module} gives an equality 
\[
\eta_{\lambda}(M_{\Sigma',\infty}^D) = (\varpi^{\nu})\cdot \eta_{\lambda}(M_{\Sigma,\infty}^D)
\]
for some nonnegative integer $\nu$  which can be explicitly determined by computing the composition $\pi_\infty^\vee \circ\pi_\infty$. On the other hand, we can also show that 
\[
\fitt_{\mco}(\Phi_\lambda(R^D_{\Sigma',\infty})) =(\varpi^{\nu})\cdot \fitt_{\mco}(\Phi_\lambda(R^D_{\Sigma,\infty}))
\]
by direct computation, which is possible in our case, unlike in Wiles' argument, as the rings $R^D_{\Sigma,\infty}$ and $R^D_{\Sigma',\infty}$ can be described explicitly. This gives the equality  
\[
\eta_{\lambda}(M_{\Sigma',\infty}^D) =\fitt_{\mco}(\Phi_\lambda(R^D_{\Sigma,\infty}))
\]
for all $\Sigma$, and so Theorem \ref{th:defect0} gives the desired result.

The claim that the rank $\mu$ is independent of $\Sigma$ is a consequence of the fact that the maps $\pi_\infty$ are isomorphisms after localizing at $\Ker \lambda$.
\end{proof}

\subsection*{Factorization formula for congruence ideals of Hecke rings}
The main goal of this section is the factorization for congruence ideals of Hecke algebras in Theorem \ref{th:BK}.  The results of \cite{Bockle/Khare/Manning:2021b} and \cite{Bockle/Khare/Manning:2024} which  proved formulas for Wiles defect for the Hecke algebras we study are relatively simple  consequences  of this.

As before, let  $D$ be a quaternion algebra over $\mbb Q$, ramified at a set of primes $\mathfrak D$ of even cardinality. We consider Shimura curves $X^D(K)$ associated to compact open subgroups $K\subseteq (D\otimes \mbb{A}^\infty)^\times$; we will use  a different choice of level subgroups as compared to the previous section.

Take any finite set of prime $\Sigma$, disjoint from $\mathfrak{D}$, and consider the compact open subgroup $K_\Sigma = \prod_q K_{\Sigma,q}\subseteq (D\otimes \mbb{A}^\infty)^\times$ where:
\begin{itemize}
    \item $K_{\Sigma,q}$ is the maximal compact subgroup of $(D\otimes {\mbb Q}_q)^\times$ for each $q\in\mathfrak{D}$;
    \item $K_{\Sigma,q} = K_0(q)$ for $q\in\Sigma$. Here 
\[
K_0(q^e) = \left\{
\begin{pmatrix}a&b\\c&d\end{pmatrix}\in\GL_2(\mbb{Z}_q)\middle|\begin{pmatrix}a&b\\c&d\end{pmatrix}\equiv \begin{pmatrix}*&*\\0&*\end{pmatrix} \pmod{q^e}\right\}\subseteq \GL_2(\mbb{Z}_q).
\]
\item  $K_{\Sigma,q} = \GL_2(\ZZ_q)$  for $q\notin\Sigma\cup {\mathfrak D}$.
\end{itemize}

  This difference in the  choice of level structure at primes in $\Sigma$ than the  previous section reflects the key difference between this section and the previous one, namely that while in the previous section we studied weight one forms, here  we focus on   weight 2  newforms $f \in S_2(\Gamma_0(N),\mco)$ of square-free level $N={\mathfrak D}\Sigma$.   (By abuse of notation, we also  use $\mathfrak D$ and $\Sigma$  to denote the product of all the primes in the set $\mathfrak D$ and $\Sigma$. The context will make clear which meaning is intended.) 

We consider  a prime $p$  below not dividing $2\Sigma \mathfrak D$ and a finite extension $E/\QQ_p$, and let $\mco$ be the ring of integers in $E$, $\varpi$ a uniformizer and $k=\mco/\varpi$ the residue field. We assume below that $E$ is sufficiently large: in particular    $E$ contains the field of Fourier coefficients $K_f$ of  $f \in S_2(\Gamma_0(N))$.  Let $\rho_f\colon G_\Q \to \GL_2(\mco)$ be the Galois representation associated by Eichler and Shimura to $f$ and an embedding $\iota\colon K_f \hookrightarrow E$,  and assume   that the corresponding residual  Galois representation 
\[
\rhobar_f=\rhobar\colon G_\Q\to \GL_2(k)
\]
is  absolutely irreducible. Observe that $\det \rho_f =\epsilon_p$ the $p$-adic cyclotomic character. 

Note that   $N(\rhobar)>1$, and our conditions imply that the order of ${\rm im}(\rhobar)$ is divisible by $p$  and hence that  $\Ad^0(\rhobar)$ is absolutely irreducible and (otherwise $\rhobar$ has Serre weight 2 and Artin conductor 1 by assumptions on determinant and ramification above). This implies that  $\rhobar\colon G_{\Q(\mu_p)} \to \GL_2(k)$  is irreducible and thus satisfies the ‘Taylor–Wiles hypothesis’ needed  implicitly for the arguments below.

By enlarging $\mco$ if necessary,  we may assume that $k$ contains all eigenvalues of $\rhobar(\sigma)$ for all $\sigma\in G_\Q$. The Galois representation $\rho_f\colon G_\Q \to \GL_2(E)$, with irreducible residual representation $\rhobar$,  is locally at $q$ of the form\[
 \begin{bmatrix} 
 \epsilon_p\chi & * \\ 0 & \chi 
 \end{bmatrix}
 \]   
 for an unramified character $\chi$ of order dividing 2. 

 Consider $ \mco[T_n|(n,p N ) = 1] \subseteq \End_{\mco}(S_k(\Gamma_0(N),\mco)))$ the anemic Hecke algebra,  and  the full Hecke algebra in which we include  $U_q$ for all $q|N$. 
 These algebras act faithfully on $\hh^1(X_0(N),\mco)$. The newform $f$ induces augmentations of these Hecke algebras that we denote by $\lambda_f$. Consider the  maximal ideal $\fm$ of  the full Hecke algebra  that  contains $\Ker \lambda_f$. Denote  by $\TT$ and $\fullT$ respectively the images of the anemic and full Hecke algebra acting on $\hh^1(X_0(N),\mco)_\fm$. By the Jacquet-Langlands correspondence $f$ also induces augmentations to $\mco$  --  that we again denote by $\lambda_f$ -- of the full and anemic  Hecke algebras acting on $\hh^1(X^D(K_\Sigma),\mco)$ for every factorization $N=\mathfrak D\Sigma$ with $\mathfrak D$  divisible by an even number of primes.  We abuse notation and denote again  by   $\fm$  the maximal ideal of the full Hecke algebra acting 
 on $\hh^1(X^D(K_\Sigma), \mco)$ (which is the inverse image of the maximal ideal of $\mco$ under $\lambda_f$), and denote   by $\TT_\Sigma^D$ and $\fullT_\Sigma^D$ respectively the images of the anemic and full Hecke algebra acting on $\hh^1(X^D(K_\Sigma),\mco)_\fm$. We may also consider the case when $\mathfrak D$ is the empty set, in which case we are back to $\TT,\fullT$ acting on $\hh^1(X_0(N),\mco)_\fm$.

 Let $R_p$ be the framed local deformation ring parameterizing crystalline  lifts of $\rhobar_p=\rhobar|_{G_{\QQ_p}}$ with weight $2$.  This is a smooth ring over $\mco$ of relative dimension 4.  For primes $q$ dividing $N$ we consider $\Rt_q$  and $R_q$, the local deformation rings  which parametrize unipotent deformations  with, and without,  choice of  Frobenius eigenvalue,  of $\rhobar|_{G_{\QQ_q}}$; these are the rings  $R_q^{\phuni} $ and $R_q^{{\rm uni}}$ of \S  \ref{se:BKM-revisited} above  and \cite{Bockle/Khare/Manning:2024}.   The newform $f$ induces augmentations  that we again denote by $\lambda_f$ of $R_p,\Rt_q,R_q$: by the genericity of local components of the automorphic representation $\pi_f$ corresponding to $f$ of $\GL_2(\A_\Q)$, these local rings are all smooth at $\lambda_f$ (see \cite[Proposition 1.2.2, Theorem 1.2.7]{Allen:2016}), and thus these rings are in our category $C_\mco(3)$ for $q \neq p$, and $C_\mco(4)$ for $q=p$.

 For each factorization of $N$ as $N=\Sigma \mathfrak D$, let:
\begin{align*}
 R_{\Sigma,{\rm loc}}^D
  &=  \widehat{\bigotimes_{q \in \Sigma}} R_q^{\uni}  \widehat{\otimes}\widehat{\bigotimes_{q \in \mathfrak D}} R_q^{\rm St} \widehat{\otimes} R_p \\
 \Rt_{\Sigma, {\rm loc}}^D
 &=\widehat{\bigotimes_{q \in \Sigma}} R_q^{\phuni} \widehat{\otimes}\widehat{\bigotimes_{q \in \mathfrak D}} R_q^{\rm St}  \widehat{\otimes} R_p \,.
 \end{align*}
 Let $R_\Sigma^{D,\square}$  (respectively,  $R_\Sigma^{D}$)   be the  framed  (respectively, unframed) global Galois deformation ring of $\rhobar$ parameterizing lifts unramified outside $Np$, unipotent at $q\in \Sigma$, Steinberg at $q \in \mathfrak D$,  and crystalline  of weight $2$ at $p$.  There is a canonical map  $R_\Sigma^D \to R_\Sigma^{D,\square}$;  as it is smooth, we may also regard $R_\Sigma^D$ as a quotient of $R_\Sigma^{D, \square}$.

 We also have the  modified  $\Rt_\Sigma^D$ and  $\Rt_\Sigma^{D,\square}$.  We consider quotient maps
    $(R^D_{\Sigma})^{\square} \to R^D_{\Sigma}$ and $(\Rt^D_{\Sigma})^{\square} \to \Rt^D_{\Sigma}$, such that the compositions $R^D_{\Sigma} \to (R^D_{\Sigma})^{\square} \to R^D_{\Sigma}$  and  $\Rt^D_{\Sigma} \to (\Rt^D_{\Sigma})^{\square} \to \Rt^D_{\Sigma}$ are  the identity, and that arise from a consistent choice of frames at all places in $\Sigma \cup \mathfrak D$,  so that the following diagram commutes:
\[
\begin{tikzcd}[row sep=small]
       R^D_{\Sigma,{\rm loc}}  \arrow[d] \arrow[r] 
                & \Rt^D_{\Sigma,\rm{loc}} \arrow[d] \\
       (R^D_{\Sigma})^{\square}  \arrow[d] \arrow[r] 
                & (\Rt^D_{\Sigma})^{\square} \arrow[d] \\ 
       R^D_{\Sigma} \arrow[r] & \Rt^D_{\Sigma}         
\end{tikzcd}
\]
By \cite{Carayol:1994} there is a universal modular deformation $\rho_\Sigma^D\colon G_\Q \to \GL_2(\TT_\Sigma^D)$ that is a specialization of the universal representation $\rho_\Sigma^{D,  \rm univ}\colon G_\Q \to \GL_2(R_\Sigma^D)$ via a surjective map  $R_\Sigma^D \to \TT_\Sigma^D$; this map  extends  naturally (mapping choice of Frobenius eigenvalue at $q$, for $q \in \Sigma$, to $U_q$)  to $\Rt_\Sigma^D \to \fullT_\Sigma^D$. We define  
\[
M_\Sigma^D={\Hom}_{\TT_\Sigma^D[G_\Q]} (\rho_\Sigma^{D, \rm univ},\hh^1(X_\Sigma^D,\mco)_\fm^*)\,.
\]  
Thus $\hh^1(X_\Sigma^D,\mco)^*_\fm=M_\Sigma^D \oplus M_\Sigma^D$, and the action of $\TT_\Sigma^D$ on $M_\Sigma^D$ extends to an action of $\fullT_\Sigma^D$.  The theorem below refines the main result of \cite{Bockle/Khare/Manning:2024} for elliptic modular forms.

\begin{theorem}
\label{th:BK}
The following equalities hold:
\begin{align*}
\eta_{\lambda_f}(\TT_\Sigma^D)&=\eta_{\lambda_f}(\fullT_\Sigma^D)= \fitt_\mco \hh^1_{\mathrm{f}}(\QQ, \Ad \rho_f \otimes E/\mco )\prod_{q\in \Sigma} (\varpi^{m_q+n_q}) \prod_{q \in \mathfrak D} (\varpi^{n_q}). \\
\fitt_\mco \Phi_{\lambda_f}(\TT_\Sigma^D)
            &=\fitt_\mco \hh^1_{\mathrm{f}}(\QQ, \Ad \rho_f \otimes E/\mco)  \prod_{q\in \Sigma} (\varpi^{m_q+2n_q}) \prod_{q \in \mathfrak D} (\varpi^{3n_q});       \\
\fitt_\mco \Phi_{\lambda_f}(\fullT_\Sigma^D)
            &=\fitt_\mco \hh^1_{\mathrm{f}}(\QQ, \Ad \rho_f \otimes E/\mco)  \prod_{q\in \Sigma} (\varpi^{m_q+3n_q}) \prod_{q \in \mathfrak D} (\varpi^{3n_q}).    
\end{align*}
Thus the Wiles defects are
\[
\delta_{\lambda_f}(\TT_\Sigma^D)=\sum_{q \in \Sigma} n_q + \sum_{q \in \mathcal D} 2n_q \, 
\quad
\text{and}
\quad
\delta_{\lambda_f}(\fullT_\Sigma^D)=\sum_{q \in \Sigma} 2n_q + \sum_{q \in \mathcal D} 2n_q\,.
\]
\end{theorem}

 The proof has several inputs and is given later on in this section.

\subsubsection*{Patching}
The result below, proved using  patching, paraphrases \cite[Theorem  6.4]{Bockle/Khare/Manning:2024}. 

\begin{proposition}
    \label{pr:patching}
	There exist rings 
	\[
	R_{\Sigma,\infty}^D=R_{\Sigma, {\rm loc}}^D\pos{x_1,\ldots,x_g}\quad\text{and}\quad
    \Rt_{\Sigma,\infty}^D =\Rt_{\Sigma, {\rm loc}}^D\pos{x_1,\ldots,x_g}
	\]
	and $S_\infty = \mco\pos{y_1,\ldots,y_c}$ satisfying the following:
	\begin{enumerate}[\quad\rm(1)]
		\item $\dim S_\infty = \dim R_{\Sigma,\infty}^D= \dim  \Rt_{\Sigma,\infty}^D $.
		\item  
        Consider the map $\iota\colon R_{\Sigma,\infty}^D \to \Rt_{\Sigma,\infty}^D$ that extends the map   $R_{\Sigma,{\rm loc}}^D \to \Rt_{\Sigma,{\rm loc}}^D$ and sends $x_i$ to $x_i$ for $i=1,\cdots, g$. Then there   exists a  continuous $\mco$-algebra morphism $j\colon S_\infty\to R_{\Sigma,\infty}^D $ such that $R_{\Sigma,\infty}^D$ and   $\Rt_{\Sigma,\infty}^D$,  with their induced structure as  modules over  $S_\infty$, are  finite and  free.
		\item 
        There are  isomorphisms of $R_{\Sigma,{\rm loc}}^D$-algebras, and  $\Rt_{\Sigma,{\rm loc}}^D$-algebras,
        \[
         \mco \otimes_{S_\infty} R_{\Sigma, \infty}^{D} \cong R_{\Sigma}^D \quad\text{and}\quad
          \mco  \otimes_{S_\infty}\Rt_{\Sigma, \infty}^{D} \cong \Rt_{\Sigma}^D
         \]
        respectively,  such that the natural diagram
        \[
\begin{tikzcd}[column sep=small]
S_{\infty} \arrow[r,"j"]
                &R^D_{\Sigma,\infty} \arrow[d,twoheadrightarrow] \arrow[rr,"\iota"] 
                   && \Rt^D_{\Sigma,\infty} \arrow[d,twoheadrightarrow] \\
                &R^D_{\Sigma} \arrow[rr] 
                   && \Rt^D_{\Sigma}         
\end{tikzcd}
\] 
commutes. 
		\item The rings $R_\Sigma^D$ and $\Rt_\Sigma^D$  are finite free over $\mco$, and both act faithfully on $M_\Sigma^D$. 
	\end{enumerate}
\end{proposition}

\begin{proof}
We refer to  \cite[Theorem  6.4]{Bockle/Khare/Manning:2024} and its proof. While  (2)  and (3)  are not explicitly proved there as stated--as in \emph{op.cit.}
the map  $R_{\Sigma,\infty}^D \to \Rt_{\Sigma,\infty}^D $ is not considered--it is easy to modify the proof there to obtain (2) and (3). 

The  proof of (4) relies on the fact that  the local rings $R_q,\Rt_q,R_p$ are Cohen-Macaulay. This  type of argument goes back to  the paragraph before Corollary 4.7 of \cite{Khare/Wintenberger:2009b}; see also  \cite[Proposition 5.1.1, Section 5.2]{Snowden:2018}.
\end{proof}

The following extract of Proposition \ref{pr:patching} is needed below.

\begin{corollary}
\label{cor:patching-output}  
The sequence $\bos{y}\colonequals y_1,\ldots, y_c$ is regular on $\Rt_{\Sigma, \infty}^D$ and   $R_{\Sigma, \infty}^D$ and
\[
\Rt_{\Sigma, \infty}^D/(\bos{y}) \cong \Rt_\Sigma^D\quad\text{and} \quad 
R_{\Sigma, \infty}^D/(\bos{y})\cong R_\Sigma^D\,.
\]
Furthermore, $R_\Sigma^D \cong \TT_\Sigma^D $ and $\Rt_\Sigma^D \cong \fullT_\Sigma^D$, and   $(\Rt_\Sigma^D,\lambda_f)$ and $(R_\Sigma^D,\lambda_f)$ are in $\cato_\mco(0)$, and are Cohen-Macaulay. 
\end{corollary}

\begin{proof}
We know from the theorem that $R_\Sigma^D, \Rt_\Sigma^D$ both act faithfully on $M_\Sigma^D$;  as their actions factor through $\TT_\Sigma^D$ and $\fullT_\Sigma^D$, we get  $R_\Sigma^D \cong \TT_\Sigma^D $ and $\Rt_\Sigma^D \cong \fullT_\Sigma^D$. The last statement about $(R_\Sigma^D,\lambda_f)$ and  $(\Rt_\Sigma^D,\lambda_f)$  being in the category $C_\mco(0)$ follows from these isomorphisms and  the  fact that, as $f$ is a newform of level $N$, the localization  of $\fullT_\Sigma^D$ at $\fp_f=\ker \lambda_f$ is $E$.   
\end{proof}

This together with Theorem \ref{th:deformation} is key to the  factorizations of congruence ideals $\eta_\lambda(\TT)$ of Hecke algebras.

\subsubsection*{Cotangent spaces of deformation rings}
In what follows, to lighten the notation we drop $\Sigma,D$ from the notation: for instance  we denote the rings $R_{\Sigma}^D$ and $\Rt_\Sigma^D$ by $R,\Rt$. In fact as the argument is the same in all cases, we further assume that $\mathfrak D$ is the empty set.  The ideals $\fn_\infty,\fp_\infty$, and $\fpt_\infty$ denote the kernels of the augmentation of $S_\infty,R_\infty$, and $\Rt_\infty$ induced by $\lambda_f$; by $\fp_q,\fpt_q$  kernels of the augmentation  induced by $\lambda_f$ for  $R_q$ and $\Rt_q$; and  by $\fp,\fpt$  kernels of the augmentation  induced by $\lambda_f$ for  $R$ and $\Rt$.  Note that the isomorphism  $R_{\infty}=R_{{\rm loc}}\pos{x_1,\ldots,x_g}$  induces  $\fp_\infty/\fp_\infty^2 \cong \prod_{q|Np} \fp_q/\fp_q^2 \times \mco^g $ with projection maps ${\rm pr}_q\colon \fp_\infty/\fp_\infty^2 \to \fp_q/\fp_q^2$.

One can prove the  formula for $\fitt_\mco \Phi_\lambda(R)$ using  Poitou-Tate sequence; see \cite[Lemma 2.1]{Diamond/Flach/Guo:2004}.  We sketch a different argument using patching, in particular Corollary \ref{cor:patching-output}.  This allows us to give a unified argument proving the formulas that pertain to congruence ideals and Fitting ideals of cotangent spaces.  The Jacobi-Zariski exact sequence associated to $R_\infty\to R\to \mco$ reads
\begin{equation}
\label{eq:cotangent}
0\longrightarrow \fn_\infty/\fn_\infty^2 \xrightarrow{\ \iota_R} \fp_\infty/\fp_\infty^2  \longrightarrow \fp/\fp^2 \longrightarrow 0\,.
\end{equation}
The exactness on the left  follows from $R_{\fp}=E$. From this we get:
\begin{equation}
\label{eq:cotangent1}
0\longrightarrow (\fp_\infty/\fp_\infty^2)^*\xrightarrow{\ \iota_R^*\ } (\fn_\infty/\fn_\infty^2)^* \longrightarrow
 \Phi_{\lambda_f}(R)^\vee \xrightarrow{ \alpha} \Phi_{\lambda_f}(R_\infty)^\vee \longrightarrow 0\,
\end{equation}
Note that the map  $\alpha$  factors as 
\begin{equation}
\label{eq:cotangent2}
\Phi_{\lambda_f}(R)^\vee \hookrightarrow (\fp_\infty/\fp_\infty^2)^\vee \longrightarrow  (\fp_\infty/\fp_\infty^2)^\vee/(\fp_\infty/\fp_\infty^2)_{\rm div} ^\vee
= \Phi_{\lambda_f}(R_\infty)^\vee,
\end{equation}
 with $(\fp_\infty/\fp_\infty^2)_{\rm div}^\vee$ the divisible  submodule  of $(\fp/\fp^2)^\vee$.

   We let   $\mathfrak L=(\mathfrak L_q)$  be  the local \emph{Selmer conditions}:

\begin{itemize}

    \item $\mathfrak L_q=\hh^1_{\rm unr} (G_q,\Ad_f \otimes E/\mco)$ for $(q,Np)=1$,  and 
   
   \item $\mathfrak L_q$ is the image in $\hh^1(G_q,\Ad_f \otimes E/\mco)$ of $(\fp_q/\fp_q^2)^\vee$ that sits inside the split exact sequence
\[
   0 \to (\fp_q/\fp_q^2)_{\rm div}^\vee \to  (\fp_q/\fp_q^2)^\vee \to \Phi_\lambda(R_q)^\vee \to 0  ;
\] 
  
\end{itemize}
Define the global (deformation theoretic) Selmer group  as   
 \[
 \hh^1_{\mathfrak L}(\Q,\Ad_f \otimes E/\mco)\colonequals 
    \Ker\big(\hh^1(\Q,\Ad_f \otimes E/\mco) \longrightarrow  
    \prod_{q} \frac{\hh^1(G_q,\Ad_f \otimes E/\mco)}{\mathfrak L_q}\big)\,.
 \]
With $\hh^1_{\mathrm{f}}(G_q,\Ad_f \otimes E)$ the  Bloch-Kato local Selmer group with $E$-coefficients, set
  \[
  \hh^1_{\mathrm{f}}(G_q,\Ad_f \otimes E/\mco) \colonequals \image\big(
  \hh^1_{\mathrm{f}}(G_q,\Ad_f \otimes E)\longrightarrow \hh^1(G_q,\Ad_f \otimes E/\mco)\big)\,.
  \]
The Bloch-Kato  global  Selmer group is
\[
 \hh^1_{\mathrm{f}}(\Q,\Ad_f \otimes E/\mco)\colonequals 
    \Ker\big(\hh^1(\Q,\Ad_f \otimes E/\mco) \longrightarrow  
    \prod_{q} \frac{\hh^1(G_q,\Ad_f \otimes E/\mco)}{\hh^1_{\mathrm f}(G_q,\Ad_f \otimes E/\mco)}\big).
 \]
 
The  following omnibus Lemma \ref{le:Kisin-lemma} is useful for us. 

\begin{lemma}
  \label{le:Kisin-lemma} 
The following statements hold.
\begin{enumerate}[\quad\rm(1)]
\item   
The image of $\Hom_\mco(\fp_q/\fp_q^2,E)$ in  $\hh^1(G_q,\Ad_f \otimes E)$ is  $\hh^1_{\mathrm{f}}(G_q,\Ad_f \otimes E)$. 
\item
The image of $(\fp_q/\fp_q^2)_{\rm div}^\vee$ in $\hh^1(G_q,\Ad_f \otimes E/\mco)$
is  $\hh^1_{\mathrm{f}}(G_q,\Ad_f \otimes E/\mco)$.
\item  We have an injection 
\[
\Phi_{\lambda_f}(R_\infty)^\vee \hookrightarrow  \prod_{q|Np} \frac{\hh^1(G_q,\Ad_f \otimes E/\mco)}{\hh^1_{\mathrm{f}}(G_q,\Ad_f \otimes E/\mco)}.
\]
\item  
$\Phi_{\lambda_f}(R)^\vee\cong\hh^1_{\mathfrak L}(\Q,\Ad_f \otimes E/\mco).$
\end{enumerate}
  \end{lemma}

\begin{proof}
We note that  $\Hom_\mco(\fp_q/\fp_q^2,E)$ is   the tangent space  at the maximal ideal of   the localization of  $R_q$ at $\fp_q$; see \eqref{eq:rank-cotangent}.  Parts (1) and (2) then  
follow from a result of Kisin \cite[Proposition 2.3.5]{Kisin:2009a} (see also \cite[Lemma 1.2.5]{Allen:2016}),  which identifies the completion of  localisation of $R_q$ at $\fp_q$ with deformation ring that parametrizes  lifts  of  the representation 
\[
\rho_{\fp_q}\colonequals \rho_f|_{G_q} \otimes E\colon G_q \to   \GL_2(E)
\]
of fixed Hodge-Tate weights  $(0,1)$(in the case of $q=p$), and of inertial type that is  semistable  for $q \neq p$,  and that is  unramified (for $q=p$).

As to (3), part (1)  implies  that the image of 
 \[ 
 (\fp_\infty/\fp_\infty^2)_{\rm div}^\vee \to \prod_{q|Np} \hh^1(G_q,\Ad_f \otimes E/\mco) 
 \] 
 is 
$\prod_{q|Np} \hh_f^1(G_q,\Ad_f \otimes E/\mco)$.

Observe that  the space of   coboundaries $B^1(G_q,\Ad_f \otimes E/\mco)$ is the image of the divisible module $\Ad_f \otimes E/\mco$: 
\[
0 \to (\Ad_f \otimes E/\mco)^{G_q} \to \Ad_f \otimes E/\mco \to  B^1(G_q,\Ad_f \otimes E/\mco) \to 0, 
\]   
and hence is divisible. Thus  the kernel of the natural map   
\[ 
(\fp_\infty/\fp_\infty^2)^\vee \to \prod_{q|Np} \hh^1(G_q,\Ad_f \otimes E/\mco) 
\] 
is  divisible, and contains  the   coboundaries $\prod B^1(G_q,\Ad_f \otimes E/\mco)$. Furthermore as   $\prod_{q|Np} \hh_{\mathrm{f}}^1(G_q,\Ad_f \otimes E/\mco)$ is divisible we deduce that the kernel of   
\[ 
(\fp_\infty/\fp_\infty^2)^\vee \to \prod_{q|Np} \frac{\hh^1(G_q,\Ad_f \otimes E/\mco)}{\hh^1_{\mathrm{f}}(G_q,\Ad_f \otimes E/\mco)} 
\] 
is also divisible. As we   can write  (non-canonically) 
 \[ 
 (\fp_\infty/\fp_\infty^2)^\vee= (\fp_\infty/\fp_\infty^2)_{\rm div}^\vee \oplus\Phi_{\lambda_f}(R_\infty)^\vee, 
 \] 
 we conclude that one has an injection
  \[
  \Phi_{\lambda_f}(R_\infty)^\vee \hookrightarrow  \prod_{q|Np} \frac{\hh^1(G_q,\Ad_f \otimes E/\mco)}{\hh^1_{\mathrm{f}}(G_q,\Ad_f \otimes E/\mco)}.
  \]

Part (4) is a standard argument so we skip it; see  \cite[Lemma 2.40]{Darmon/Diamond/Taylor:1997} and also  \cite[Proposition 3.24]{Gee:2022}.
\end{proof}

Another crucial input for  the proof of Theorem \ref{th:BK}  is the following proposition whose proof relies on Lemma \ref{le:Kisin-lemma}.

 \begin{proposition}
 \label{prop:BKSelmer}
 The cokernel of the map $\iota_R^*$ in \eqref{eq:cotangent1}  is $\hh^1_{\mathrm{f}}(\Q,\Ad_f \otimes E/\mco)$. 
 \end{proposition}

\begin{proof}
Keeping in mind that
\[
\Phi_{\lambda_f}(R)^\vee \cong \hh^1_{\mathfrak L}(\Q,\Ad_f \otimes E/\mco) \quad\text{and}\quad
\Phi_{\lambda_f}(R_\infty)^\vee \cong 
\prod_{q|Np} (\fp_q/\fp_q^2)^\vee/(\fp_q/\fp_q^2)_{\rm div}^\vee
\]
from  \eqref{eq:cotangent2} we get:
\[ 
\Phi_{\lambda_f}(R)^\vee \longrightarrow 
\Phi_{\lambda_f}(R_\infty)^\vee \hookrightarrow  \prod_{q|Np} \frac{\hh^1(G_q,\Ad_f \otimes E/\mco)}{\hh^1_{\mathrm{f}}(G_q,\Ad_f \otimes E/\mco)}.
\]
The injectivity of last map is by Lemma \ref{le:Kisin-lemma}(3).  We claim that the resulting map  \[ \hh^1_{\mathfrak L}(\Q,\Ad_f \otimes E/\mco) \to \frac{\hh^1(G_q,\Ad_f \otimes E/\mco)}{\hh^1_{\mathrm{f}}(G_q,\Ad_f \otimes E/\mco)} ,\]  arises  from  the  global-to-local restriction map of Galois cohomology. To justify the claim, recall that   for $q|Np$   the map $R_\infty \to R^\square$ induces the  natural map $R_q \to R\square$ between  a local and global deformation ring. 
 
Thus we get the exact sequence: 
\[   
0 \longrightarrow \hh^1_{\mathrm{f}}(\mbb{Q},\Ad_f \otimes E/\mco) \longrightarrow \Phi_{\lambda_f}(R)^\vee  \xrightarrow{\alpha} \prod_{q|Np} \Phi_{\lambda_f}(R_q)^\vee \longrightarrow 0,
\]
with $\alpha$ arising from  global-to-local restriction map of Galois cohomology.   Comparing the exact sequence above to \eqref{eq:cotangent1} gives the desired isomorphism. 
\end{proof}

\subsection*{Proof of Theorem~\ref{th:BK}}
 Using Corollary \ref{cor:patching-output} (which gives    $\TT$ is a quotient of $R_\infty = (\widehat{\bigotimes}_{q|Np} R_q)\pos{x_1,\cdots, x_g}$ by a regular sequence) and  applying   Theorem~\ref{th:deformation}  and  Proposition \ref{prop:BKSelmer} we get  that:
\[
\eta_{\lambda_f}(\TT)=\fitt_\mco \hh^1_{\mathrm{f}}(\QQ,\Ad_f \otimes E/\mco) \prod_{q|Np} \eta_\lambda(R_q^{\uni})\,.
\]
We also get 
\[
\fitt_\mco(\Phi_{\lambda,f}(R))=\fitt_\mco \hh^1_{\mathrm{f}}(\QQ,\Ad_f \otimes E/\mco) \prod_{q|Np} \fitt_\mco(\Phi_{\lambda_f}(R_q^{\uni}))\,.
\]
Similarly for maps $\Rt_\infty\to \Rt\to \mco$ we  have the Jacobi-Zariski exact sequence:
\[
0\longrightarrow \fn_\infty/\fn_\infty^2 \xrightarrow{\ \iota_{\Rt}} \fpt_\infty/\fpt_\infty^2  \longrightarrow \fpt/\fpt^2 \longrightarrow 0     
\]
The exactness on the left follows from $\Rt_{\fp_{\lambda_f}}=E$.  This yields an exact sequence
\begin{gather*}
0\longrightarrow (\fpt_\infty/\fpt_\infty^2)^*\xrightarrow{\ \iota_{\Rt}^*\ } (\fn_\infty/\fn_\infty^2)^* \longrightarrow
 \Phi_{\lambda_f}(\Rt)^\vee \longrightarrow \Phi_{\lambda_f}(\Rt_\infty)^\vee \longrightarrow 0\, 
\end{gather*}
Since $\fullT$ is a quotient of $\Rt_\infty = (\widehat{\bigotimes}_{q|Np} \Rt_q)\pos{x_1,\cdots, x_g}$ by a regular sequence, by applying  Theorem~\ref{th:deformation} to $\Rt_\infty$ we get:
 \[
 \eta_{\lambda_f}(\fullT)
            =\fitt_\mco (\coker(\iota_{\Rt}^*))\prod_{q|Np} \eta_{\lambda_f}(R_q^{\phuni}).
            \] 
 One can compare the cokernels of $\iota_{\Rt}^*$ and $\iota_R^*$ using
\[ 
0 \to  \coker((\fpt_\infty/\fpt_\infty^2)^* \to (\fp_\infty/\fp_\infty^2)^*) \to   \coker(\iota_{\Rt}^*)   \to \coker(\iota_{R}^*) \to 0.     
\]
From Proposition \ref{pr:patching}(3)  we deduce:
\[ 
0 \to  \prod_{q|Np} \coker\big((\fpt_q/\fpt_q^2)^* \to (\fp_q/\fp_q^2)^*\big) \to   \coker(\iota_{\Rt}^*)   \to \coker(\iota_{R}^*) \to 0\,.     
\]  
We know that  $ \prod_{q|Np} \coker\big((\fpt_q/\fpt_q^2)^*   \to (\fp_q/\fp_q^2)^*\big)$ is surjective from Lemma \ref{le:transfer-factor}, and thus   
\[
\coker(\iota_{\Rt}^*)   \cong \coker(\iota_{R}^*).
\]
From Proposition~\ref{prop:BKSelmer} we get  $\coker(\iota_R^*) \cong \hh^1_{\mathrm{f}}(\mbb{Q},\Ad_f \otimes E/\mco)$. Thus 
 $\coker(\iota_{\Rt}^*) \cong \hh^1_{\mathrm{f}}(\mbb{Q},\Ad_f \otimes E/\mco)$. From this we get that 
 \begin{align*}
 \eta_{\lambda_f}(\fullT)
&=\fitt_\mco \hh^1_{\mathrm{f}}(\QQ,\Ad_f \otimes E/\mco) \prod_{q|Np} \eta_{\lambda_f}(R_q^{\phuni}) \\
\fitt_\mco(\Phi_{\lambda_f}(\Rt))
&=\fitt_\mco\hh^1_{\mathrm{f}}(\QQ,\Ad_f \otimes E/\mco) \prod_{q|Np} \fitt_\mco(\Phi_{\lambda_f}(R_q^{\phuni})).
\end{align*}
 Using the  formula from Theorem \ref{thm:uni-steinberg}:
\[
\lambda_f(R_q^{\rm uni})=\lambda_f(R_q^{\phuni})=(\varpi^{m_q+n_q}), 
\]
we get the formulas of Theorem \ref{th:BK} for congruence ideals. We also know  from Theorem \ref{thm:uni-steinberg}: 
\begin{align*}
\fitt_\mco(\Phi_{\lambda_f}(R_q^{\uni})) &=(\varpi^{m_q+2n_q})\\
\fitt_\mco(\Phi_{\lambda_f}(R_q^{\phuni})) &=(\varpi^{m_q+3n_q}).
\end{align*}
This gives the formulas in Theorem \ref{th:BK} for Fitting ideals of cotangent spaces, and thus we have proved all the claims made in statement of the theorem.

\subsection*{Local Tamagawa and congruence ideals  for $\Ad_f$}

\begin{corollary}
\label{cor:tamagawa}
The congruence ideal $\eta_{\lambda_f}(R_q^{\rm uni})$ is the Tamagawa ideal in $\mco$  of the  rank 3 adjoint motive $\Ad_f$ at $q$.
    \end{corollary}

\begin{proof}
 Consider the surjective map $R_q^\square \to R_q^{\rm uni}$ in the category $\cato_{\mco}(3)$.  By \cite[Proposition 7.9 (2)]{Bockle/Khare/Manning:2021b} the Fitting ideal of the  relative cotangent space 
 \[
 \fitt_\mco(\Phi_{\lambda_f}(R_q^\square/R_q^{\rm uni}))=(q^2-1)(\varpi^{-n_q}).
 \] It is also  known that  that $R_q^\square$ is a complete intersection.  Thus using the numerical criterion for complete intersection  from Theorem \ref{th:defect0}, and  knowing the defect of $R_q^{\rm uni} $  at $\lambda$ is $n_q$,  we get:   \[\eta_\lambda(R_q^{\rm uni})=\fitt_\mco {\rm cotors}(\hh^1(G_q,\Ad_f \otimes E/\mco))(q^2-1)^{-1}.\] We  also note that 
 \[
 (q^2-1)=({\det(1-\Frob_qq^{-s}|_{\Ad_f^{I_q}(1)})}|_{s=0})=L_q(0,\Ad_f(1))^{-1}
 \]
 as $\Frob_q$ acts by $q^2$ on $\Ad_f^{I_q}(1)$.   Here $L_q(0,\Ad_f(1))$ is the Euler factor at $p$, evaluated  at $s=0$,  of $L(s,\Ad_f(1))$. Thus 
 \[\eta_\lambda(R_q^{\rm uni})= \fitt_\mco {\rm cotors}(\hh^1(G_q,\Ad_f \otimes E/\mco))L_q(0,\Ad_f(1)).\] By the arguments in  \cite[I.4.2.2]{Fontaine/Perrin-Riou:1994} and \cite[Section 2.4]{Diamond/Flach/Guo:2004},  we  know  that the  Tamagawa ideal  in $\mco$ of $\Ad_f$  at a prime $q \neq p$ is given by 
\[
\fitt_\mco {\rm cotors}(\hh^1(G_q,\Ad_f \otimes E/\mco))L_q(0,\Ad_f(1)),
\] 
which proves the result.
    \end{proof}

\begin{remark}
We could prove the result (as explained to us by Diamond)  by using that $\eta_\lambda(R_q^{\rm uni})=(\varpi^{m_q+n_q})$, and proving separately that the  Tamagawa ideal in $\mco$  of the adjoint motive $\Ad_f$ at $q$ is the same. We have  given a slightly different proof which shows the equality in the statement without computing fully  each side separately. This strategy    might help  to link Tamagawa ideals to congruence ideals  when considering local deformation rings at $q$ (for $q \neq p$)  beyond the 2-dimensional case.  Note that as $\pi: R_q^\square \to R_q^{\uni}$ is  a surjective map of Gorenstein rings, we know that $\ann(\ker \pi)=(\pi^\vee (1))$ with $\pi^\vee$ the dual map. It will be interesting to evaluate directly, without computation of change of cotangent spaces that 
\[
\lambda(\pi^\vee(1)) =({\det(1-\Frob_qq^{-s}|_{\Ad_f^{I_q}(1)})}|_{s=0})=(q^2-1)\,.
\]
\end{remark}

We also note in passing:

\begin{lemma}
    
   One has $\hh^1_{\mathrm{f}}(G_q,\Ad_f \otimes E)=0$ for all primes $q|N$. Furthermore $\dim_E \hh^1_{\mathrm{f}}(G_p,\Ad_f \otimes E)= \dim_E \hh^0_{\mathrm{f}}(G_p,\Ad_f \otimes E)+1$.
   \end{lemma}
   
   \begin{proof}
    A  computation using  Euler characteristics and Tate duality  shows that, as  $\rho_f|_{G_q}$ is generic,  in fact all of  $\hh^1(G_q, \Ad_f \otimes E)=0$.  The last statement follows from \cite[Theorem 1.2.7]{Allen:2016}.
   \end{proof}

\begin{remark}

   For  all newforms $f$ of weight $k \geq 2$ and level $N$, not necessarily squarefree, it is true that 
   \[
   \hh^1(G_q,\Ad_f \otimes E)= \hh^1_{\mathrm{f}}(G_q,\Ad_f \otimes E).
   \] 
for all primes $q \neq p$. This follows from the genericity of $f$ at $q$. Also for all primes $q \neq p$ 
 \[
   \hh^1(G_q,\Ad_f \otimes E/\mco)_{\rm div}= \hh^1_{\mathrm{f}}(G_q,\Ad_f \otimes E/\mco).
   \] 

\end{remark}

\subsection*{From congruence ideals of rings to congruence ideals of modules}
Now we want to discuss how one can deduce $\eta_\lambda(M_\Sigma^D)=\eta_\lambda(\TT_\Sigma^D)$, or equivalently   $ \eta_\lambda(\TT_\Sigma^D)=\eta_{\lambda}(\hh^1(X_\Sigma^D,\mco)_\fm)$,  using the ``numerical'' arguments in \cite[Section 7]{Bockle/Khare/Manning:2024}.   This equality of congruence ideals is  (somewhat surprisingly)  true in spite of  the modules  $\hh^1(X_\Sigma^D,\mco)_\fm$ not always being  free over $\TT_\Sigma^D$; this happens for instance when $\mathfrak D$ contains a  prime $q$ such that $\rhobar$ is unramified and $\rhobar(\Frob_q)$ is scalar,

Let  $\langle \ , \ \rangle $ be the twist  by the Atkin-Lehner involution $w_N$ of the Poincar\'e  pairing $\langle \ , \ \rangle' $ on $\hh^1(X_0(N),\mco)$: thus $\langle x , y \rangle= \langle x, w_Ny \rangle'  $. The action of the Hecke operators $T_\ell$ for $(\ell,N)=1$  is self-adjoint for this pairing, and we   use only these Hecke operators in the proof below. In the statement of the corollary below, when we refer to
$\eta_{\lambda}(\hh^1(X_\Sigma^D,\mco)_\fm)$, the ambiguity of whether we are considering $\hh^1(X_\Sigma^D,\mco)_\fm$ as a $\TT$ or $\fullT$ module does not matter as the congruence ideal $\eta_{\lambda}(\hh^1(X_\Sigma^D,\mco)_\fm)$ is the same in either case. This is the easy case of Proposition \ref{pr:finite-module} for $c=0$.

The following result is deduced from formulas for the congruence ideals $\eta_\lambda(\TT_\Sigma^D)$ in Theorem \ref{th:BK}.

\begin{corollary}\label{cor:cong-mod}
    We have the equality  of ideals  \[ \eta_{\lambda_f}(\TT_\Sigma^D)=\eta_{\lambda_f}(\fullT_\Sigma^D)= \eta_{\lambda_f}(M_\Sigma^D)= \eta_{\lambda_f}(\hh^1(X_\Sigma^D,\mco)_\fm)= (\langle x, y \rangle)\]\[ =\fitt_\mco \hh^1_{\mathrm{f}}(\Q,\Ad_f \otimes E/\mco) \prod_{q\in \Sigma} (\varpi^{m_q+n_q}) \prod_{q \in \mathfrak D} (\varpi^{n_q}) ,
\] 
 with $\hh^1(X_0(N_\Sigma),\mco)[\Ker\lambda_f]=\mco x \oplus \mco y$.
 \end{corollary}

\begin{remark}
The equality \[\eta_{\lambda_f}(\hh^1(X_0(N),\mco)_\fm)=\fitt_\mco \hh^1_{\mathrm{f}}(\Q,\Ad_f \otimes E/\mco) \prod_{q|N} (\varpi^{m_q+n_q}),
\] namely the case  $\mathfrak D=\varnothing$ in the corollary, is a consequence of \cite{Diamond/Flach/Guo:2004}.
\end{remark}

\begin{proof}
  
The main idea for the proof  is  to use, in conjunction with  computation of $\eta_\lambda(\TT^D_\Sigma)$ done in Theorem \ref{th:BK},  a larger Hecke algebra $\TT(N^2)$  and a module $M(N^2)$ over it  of generic rank one, which has the property that $\TT(N^2)$  acts freely on $M(N^2)$ and is a complete intersection.  This serves to ``resolve'' the fact that  $\TT^D_\Sigma$ need not be a complete intersection, and  need not act freely on $M_\Sigma^D$. One of the key properties of congruence modules we use for these arguments is the inclusion  $\eta_\lambda(R) \subset \eta_\lambda(M)$ for $(R,\lambda) \in \cato_{\mco}(c)$, and a finitely generated $R$-module $M$ that is supported at $\lambda$. We will sketch the argument in enough (but not all) detail for the reader to see the strategy, and refer to \cite[Section 7]{Bockle/Khare/Manning:2024} for details in a  more general setting.

To define  $\TT(N^2)$  and its  module $M(N^2)$   we  proceed  by considering the oldform $f^N$ in $S_2(\Gamma_0(N^2))$  arising from  $f$ which is characterized by the property that it is an eigenform for the  Hecke operators  $T_\ell$ for $(\ell,N)=1$ with the same eignevalues as $f$,  and  $f^N|U_\ell =0$  for $\ell|N$. Let ${\fm}_N$
be maximal ideal of the Hecke action on $\hh^1(X_0(N^2),\mco)$ corresponding to $f^N$, and consider 
 \[
 M(N^2)={\Hom}_{R^N[G_\Q]} (\rho^N,\hh^1(X_0(N^2),\mco)_{\fm_N}^*).
 \]  
 Here $R^N$ is the ring which parametrizes deformations  of $\rhobar$ that are unramified outside $Np$, with cyclotomic determinant, finite flat at $p$, and no conditions at primes $q$ dividing $N$, and $\rho^N$ is the corresponding universal deformation.   We have  
 \[
 \hh^1(X_0(N^2),\mco)^*_{\fm_N}=M(N^2)  \oplus M(N^2)\,.
 \] 
 Furthermore the corresponding (anemic) Hecke algebra $\TT(N^2)$ acts on $M(N^2)$, and we denote the augmentation of $\TT(N^2)$ induced by $f^N$ by $\lambda_f$.

As recalled in \cite[Theorem 5.2]{Bockle/Khare/Manning:2021b}, by work of Wiles and Diamond,  we know that $R^N=\TT(N^2)$, that $M(N^2)$ is a free $\TT(N^2)$-module (of rank one), and  that $\TT(N^2)$ is a complete intersection. Thus
      \begin{equation}
      \label{eq:eq1.5} 
      \fitt_\mco(\Phi_{\lambda_f}(R^N))=\eta_{\lambda_f}(M(N^2)).
    \end{equation}

Using patching we show as in \cite[Section 7]{Bockle/Khare/Manning:2021b}, or as in the proof of Theorem \ref{th:BK} above (using   Jacobi-Zariski
sequences like  (\ref{eq:cotangent}), and their exactness on the left which is a consequence of patching) 
that 

\begin{equation}
\label{eq:eq0}
\fitt_\mco(\Phi_{\lambda_f} (R^N))=\fitt_\mco(\Phi_{\lambda_f}(R_\Sigma^D)) \prod_{q \in \Sigma} (q^2-1)(\varpi^{-n_q})\prod_{q \in \mathfrak D}(q^2-1)(\varpi^{m_q-2n_q}).
\end{equation}

Using geometric arguments (see \cite[Section 7]{Bockle/Khare/Manning:2024}) we prove the following inclusion of congruence ideals:
\begin{equation}
\label{eq:eq1}
\eta_{\lambda_f}(M_\Sigma^D) \prod_{q \in \Sigma} (q^2-1) \prod_{q \in \mathfrak D}(q^2-1)(\varpi^{m_q}) \subset \eta_{\lambda_f}(M(N^2))
\end{equation}    

 Since  $\eta_\lambda(R^D_\Sigma) \subset \eta_\lambda(M_\Sigma^D)$, one also has
 \begin{equation}
 \label{eq:eq2}
 \begin{aligned}
    \fitt_\mco(\Phi_{\lambda_f}(R^D_\Sigma))\eta_{\lambda_f}(M_\Sigma^D)^{-1} 
            &\subseteq  \fitt_\mco(\Phi_{\lambda_f}(R^D_\Sigma))\eta_{\lambda_f}(R^D_\Sigma)^{-1}\\
            &=\prod_{q \in \Sigma}(\varpi^{n_q})\prod_{q \in \mathfrak D}(\varpi^{2n_q}).
            \end{aligned}
    \end{equation} 
 
    From  (\ref{eq:eq0}), (\ref{eq:eq1}),  (\ref{eq:eq1.5}) and (\ref{eq:eq2}) we conclude that the containment in both (\ref{eq:eq1}) and (\ref{eq:eq2}) are equalities and that indeed  
    $\eta_{\lambda_f}(\TT_\Sigma^D)=\eta_{\lambda_f}(M_\Sigma^D)$.  As we already know that $ \eta_{\lambda_f}(\TT_\Sigma^D)=\eta_{\lambda_f}(\fullT_\Sigma^D)$ we have proved the result.
    \end{proof}

\begin{remark}
 As explained in \cite[Section 7]{Bockle/Khare/Manning:2024} the  ``upper bounds'' on change of congruence ideals in (\ref{eq:eq1}) are deduced from monodromy arguments in \cite[Theorem 2]{Ribet/Takahashi:1997} which are less delicate  than \cite[Theorem 1]{Ribet/Takahashi:1997}  (that needs a further input of  certain maps on component groups being surjective) that proves these bounds are in fact an equality. The proof above gives a method to deduce 
\cite[Theorem 1]{Ribet/Takahashi:1997} from the easier \cite[Theorem 2]{Ribet/Takahashi:1997}, and thus weaken hypotheses in proving  the former.
\end{remark}

\begin{remark}
In the case when $\mathfrak D = \varnothing$, and so $X^{\varnothing}_{\Sigma}=X_0(N)$ is the usual modular curve, we know that $\hh^1(X_0(N),\mco)_\fm^*$ is a free $\fullT$-module of rank two by the $q$-expansion principle, and so,  as $\hh^1(X_0(N),\mco)_\fm^*\cong M_{\Sigma}\oplus M_{\Sigma}$, $M_\Sigma$ is a free $\fullT$-module of rank one. As we have 
\[
M_{\Sigma,\infty}/(y_1,\cdots, y_c)=M_\Sigma
\]
and the $y_i$'s are an $M_{\Sigma,\infty}$ a regular sequence, we  deduce $M_{\Sigma,\infty}$  is also a free $\Rt_{\Sigma,\infty}$-module of rank one.

For general $\mathfrak D$, let ${\mathcal Q}\subseteq {\mathfrak D}$ be the subset of primes $q$ for which $\rhobar|_{G_{\QQ_q}}$ is trivial (or on other words, the primes $q$ for which $n_q > 0$).

In line with the categorical Langlands correspondence, we conjecture that 
     \[
     M_{\Sigma,\infty}^D=\widehat \bigotimes_{q \in \Sigma} R_q^{\phuni} \widehat{\otimes} \widehat \bigotimes_{q \in {\mathfrak D}\setminus {\mathcal Q}}  R_q^{\rm St}  \widehat{\otimes}\widehat \bigotimes_{q \in {\mathcal Q}}  M_q^{\rm St} \widehat{\otimes} \mco\pos{x_1,\ldots,x_g}
     \]
     where for each $q\in \mathcal Q$, $M_q^{\rm St}$ is a self-dual maximal Cohen--Macaulay $R_q^{\st}$-module of generic rank $1$ which depends only on the prime $q$. Note that $R_q^{\rm St}$ is formally smooth for $q \in {\mathfrak D}\setminus {\mathcal Q}$.
     
     In the case when $\mathcal Q = \varnothing$, the ring $\Rt_{\Sigma,\infty}$ is Gorenstein, and it is straightforward to prove this conjecture via the numerical criterion.
     
      In the case where $\Sigma=\varnothing$ but $\mathcal Q \ne \varnothing$, the work of the third author \cite{Manning:2021} establishes a ``mod $p$'' version of this conjecture\footnote{It is likely that a refinement of the methods of that paper could also be used to prove the integral version of the conjecture in the $\Sigma=\varnothing$ case.}, namely that
     \[
     M_{\Sigma,\infty}^D/\varpi M_{\Sigma,\infty}^D= \widehat \bigotimes_{q \in {\mathfrak D}\setminus {\mathcal Q}}  \overline{R}_q^{\rm St}  \widehat \bigotimes_{q \in {\mathcal Q}}  \overline{M}_q^{\rm St} \widehat{\otimes} \mco\pos{x_1,\ldots,x_g}
     \]
where $\overline{R}_q^{\rm St} := R_q^{\rm St}/\varpi R_q^{\rm St}$ and $\overline{M}_q^{\rm St}$ is a self-dual maximal Cohen--Macaulay module over $\overline{R}_q^{\rm St}$ of generic rank $1$, depending only on $q$. Moveover, $\overline{M}_q^{\rm St}$ is the unique self-dual $\overline{R}_q^{\rm St}$-module of generic rank $1$.

Note that this conjecture implies that the trace map 
\[
\Hom_{R_{\Sigma,\infty}^D}(M_{\Sigma,\infty}^D,\omega_{R_{\Sigma,\infty}^D}) \otimes_{R_{\Sigma,\infty}^D} M_{\Sigma,\infty}^D \to \omega_{R_{\Sigma,\infty}^D}
\]
from Proposition \ref{pr:trace-map} is onto. Indeed, applying this conjecture in the case when $\Sigma = \varnothing$ and $|\mathcal Q| = 1$, and combining it with the results of \cite{Manning:2021}, implies that $M_q^{\rm St}/\varpi M_q^{\rm St} = \overline{M}_q^{\rm St}$ (by the uniqueness of $\overline{M}_q^{\rm St}$) for all $q$. \cite[Theorem 3.3]{Manning:2021} implies that the trace map of $\overline{M}_q^{\rm St}$ is onto, and so the same holds for $M_q^{\rm St}$ by Nakayama's Lemma. It follows that the trace map of $M_{\Sigma,\infty}^D$ is also surjective.

In particular, Proposition \ref{pr:trace-map}(2) implies that for any $\lambda$ one has
 \[
\eta_\lambda(M_{\Sigma,\infty}^D)=\eta_\lambda(\Rt_{\Sigma,\infty}^D)=\eta_\lambda(R_{\Sigma,\infty}^D)\,.
\]
 One can  in fact prove this implication of the  conjecture unconditionally. Namely,  we know from Corollary \ref{cor:cong-mod} that $\eta_{\lambda_f}(M_\Sigma^D)=\eta_{\lambda_f}(\Rt_\Sigma^D)=\eta_{\lambda_f}(R_\Sigma^D)$. This  implies $\eta_\lambda(M_{\Sigma,\infty}^D)=\eta_\lambda(\Rt_{\Sigma,\infty}^D)=\eta_\lambda(R_{\Sigma,\infty}^D)$ by Theorem \ref{th:deformation} and the fact  that $M_\Sigma^D=M_{\Sigma,\infty}^D \otimes_{S_\infty} \mco$,  $\Rt_{\Sigma}^D=\Rt_{\Sigma,\infty}^D \otimes_{S_\infty} \mco$ and $R_{\Sigma}^D=R_{\Sigma,\infty}^D \otimes_{S_\infty} \mco$.
\end{remark}

\begin{remark} 
Our  two number theory applications (Theorem \ref{th:weightone} and Theorem \ref{th:BK}) differ in spirit. The first relies on our numerical criterion applied after patching to prove integral $R=\TT$ theorems. The second application uses an integral  $R=\TT$ theorem as a starting point. 
 
Using the expository nature of this paper as  poetic license  to conjure a more visual image of the local-global aspect of patching,  we can think of patching as  a cut and paste argument: writing $R=R_\infty/(y_1,\ldots,y_c)$ glues the local deformation rings that make up  $R_\infty$  to  form the global deformation ring $R$. The global term $\fitt_\mco \hh^1_{\mathrm{f}}(\Q,\Ad_f \otimes E/\mco)$  in the  factorization of $\eta_\lambda(R)$ comes from the gluing data (the equations that cut out $\Spec(R) \hookrightarrow \Spec(R_\infty)$), while the local terms $\eta_\lambda(R_q)$  come   from the ambient $\Spec R_\infty$.
\end{remark}

\section*{Acknowledgments.}
The work of the first author is partly supported by National Science Foundation grants DMS-200985 and DMS-2502004. The  third author received funding from the European Research Council (ERC) under the European Union's Horizon 2020 research and innovation programme (grant agreement No. 884596).  We would like to thank Gebhard B\"ockle for helpful correspondence. Our thanks to Matt Emerton for helpful conversations, and to Fred Diamond for allowing us include results from on-going collaborations. A special thanks to Fred Diamond for reading various versions of this manuscript and providing constructive feedback. The first author also thanks Kashif Khan and Aryaman Maithani  for numerous conversations on the material in Section~\ref{se:determinantal-rings}.

\bibliographystyle{amsplain}
\newcommand{\noopsort}[1]{}
\providecommand{\bysame}{\leavevmode\hbox to3em{\hrulefill}\thinspace}
\providecommand{\MR}{\relax\ifhmode\unskip\space\fi MR }
\providecommand{\MRhref}[2]{%
  \href{http://www.ams.org/mathscinet-getitem?mr=#1}{#2}
}
\providecommand{\href}[2]{#2}

\end{document}